\theoremstyle{plain}
\newtheorem{theorem}{Theorem}[section]
\newtheorem{lemma}[theorem]{Lemma}
\newtheorem{proposition}[theorem]{Proposition}
\newtheorem{corollary}[theorem]{Corollary}
\theoremstyle{definition}
\newtheorem{definition}[theorem]{Definition}
\newtheorem{examples}[theorem]{Examples}
\newtheorem{example}[theorem]{Example}
\theoremstyle{remark}
\newtheorem{remark}[theorem]{Remark}
\newlist{tfae}{enumerate}{1}
\setlist[tfae,1]{label=(\roman*)}
\def\nlabel#1#2{\begingroup #2%
  \def\@currentlabel{#2}%
  \phantomsection\label{#1}\endgroup } \makeatother
\newcommand{\litem}[2]{\item[\nlabel{#1}{#2}]}
\setlist[description]{font=\normalfont\space,labelindent=\parindent}
\newcommand{\ff}{\mathfrak{f}}
\newcommand{\fj}{\mathfrak{j}}
\newcommand{\fr}{\mathfrak{r}}
\newcommand{\fv}{\mathfrak{v}}
\newcommand{\fw}{\mathfrak{w}}
\newcommand{\fx}{\mathfrak{x}}
\newcommand{\fy}{\mathfrak{y}}
\newcommand{\fz}{\mathfrak{z}}
\newcommand{\calA}{\mathcal{A}}
\newcommand{\fW}{\mathfrak{W}}
\newcommand{\fX}{\mathfrak{X}}
\DeclareMathOperator{\Pdn}{P_{\!_\downarrow}\!}
\DeclareMathOperator{\Dnw}{\Downarrow\!}
\DeclareMathOperator{\SOB}{B_S}
\DeclareSymbolFont{symbolsA}{U}{txsya}{m}{n}
\DeclareSymbolFont{symbolsC}{U}{txsyc}{m}{n}
\DeclareMathSymbol{\multimapdot}{\mathrel}{symbolsC}{20}
\DeclareMathSymbol{\multimapdotinv}{\mathrel}{symbolsC}{21}
\DeclareMathSymbol{\multimap}{\mathrel}{symbolsA}{40}
\DeclareMathSymbol{\multimapinv}{\mathrel}{symbolsC}{18}
\newcommand{\blackright}{\multimapdot}
\newcommand{\blackleft}{\multimapdotinv}
\newcommand{\catfont}[1]{\mathsf{#1}}
\newcommand{\catX}{\catfont{X}}
\newcommand{\catA}{\catfont{A}}
\newcommand{\ddf}{\mathcal{D}}
\newcommand{\SET}{\catfont{Set}}
\newcommand{\REL}{\catfont{Rel}}
\newcommand{\ORD}{\catfont{Ord}}
\newcommand{\MET}{\catfont{Met}}
\newcommand{\UMET}{\catfont{UMet}}
\newcommand{\BMET}{\catfont{BMet}}
\newcommand{\PROBMET}{\catfont{ProbMet}}
\newcommand{\TOP}{\catfont{Top}}
\newcommand{\APP}{\catfont{App}}
\newcommand{\PROBAPP}{\catfont{ProbApp}}
\newcommand{\BOOLE}{\catfont{Boole}}
\newcommand{\COMPHAUS}{\catfont{CompHaus}}
\newcommand{\ORDCH}{\catfont{OrdCH}}
\newcommand{\METCH}{\catfont{MetCH}}
\newcommand{\PROBMETCH}{\catfont{ProbMetCH}}
\newcommand{\STCOMP}{\catfont{StablyComp}}
\newcommand{\POSCH}{\catfont{PosComp}}
\newcommand{\Rels}[1]{#1\text{-}\catfont{Rel}}
\newcommand{\Cats}[1]{#1\text{-}\catfont{Cat}}
\newcommand{\Dists}[1]{#1\text{-}\catfont{Dist}}
\newcommand{\two}{\mathbf{2}}
\newcommand{\V}{\mathcal{V}}
\newcommand{\quantale}{(\V,\otimes,k)}
\newcommand{\Pp}{\overleftarrow{[0,\infty]}_+}
\newcommand{\Pm}{\overleftarrow{[0,\infty]}_\wedge}
\newcommand{\op}{\mathrm{op}}
\newcommand{\sep}{\mathrm{sep}}
\newcommand{\comp}{\mathrm{comp}}
\newcommand{\kleisli}{\circ}
\newcommand{\relto}{\mathrel{\mathmakebox[\widthof{$\xrightarrow{\rule{1.45ex}{0ex}}$}]
    {\xrightarrow{\rule{1.45ex}{0ex}}\hspace*{-2.4ex}{\mapstochar}\hspace*{1.8ex}}}}
\newcommand{\modto}{\mathrel{\mathmakebox[\widthof{$\xrightarrow{\rule{1.45ex}{0ex}}$}]
    {\xrightarrow{\rule{1.45ex}{0ex}}\hspace*{-2.8ex}{\circ}\hspace*{1ex}}}}
\newcommand{\krelto}{\mathrel{\mathmakebox[\widthof{$\xrightarrow{\rule{1.45ex}{0ex}}$}]
    {\xrightharpoonup{\rule{1.45ex}{0ex}}\hspace*{-2.4ex}{\mapstochar}\hspace*{1.8ex}}}}
\newcommand{\kmodto}{\mathrel{\mathmakebox[\widthof{$\xrightarrow{\rule{1.45ex}{0ex}}$}]
    {\xrightharpoonup{\rule{1.45ex}{0ex}}\hspace*{-2.8ex}{\circ}\hspace*{1ex}}}}
\def\slashedarrowfill@#1#2#3#4#5{$\m@th\thickmuskip0mu\medmuskip\thickmuskip\thinmuskip\thickmuskip\relax#5#1\mkern-7mu
  \cleaders\hbox{$#5\mkern-2mu#2\mkern-2mu$}\hfill \mathclap{#3}\mathclap{#2}
  \cleaders\hbox{$#5\mkern-2mu#2\mkern-2mu$}\hfill \mkern-7mu#4$}
\newcommand*{\rightrelarrowfill@}{\slashedarrowfill@\relbar\relbar{\raisebox{0pc}{$\mapstochar$}}\rightarrow}
\newcommand*{\xrelto}[2][]{\ext@arrow 0055{\rightrelarrowfill@}{\;#1\;}{\;#2\;}}
\newcommand*{\rightmodarrowfill@}{\slashedarrowfill@\relbar\relbar{\raisebox{0pc}{$\hspace{0.3em}\circ$}}\rightarrow}
\newcommand*{\xmodto}[2][]{\ext@arrow 0055{\rightmodarrowfill@}{\;#1\;}{\;#2\;}}
\newcommand*{\rightkrelarrowfill@}{\slashedarrowfill@\relbar\relbar{\raisebox{0pc}{$\hspace{0.3em}\mapstochar$}}\rightharpoonup}
\newcommand*{\xkrelto}[2][]{\ext@arrow
  0055{\rightkrelarrowfill@}{\;#1\;}{\;#2\;}}
\newcommand*{\rightkmodarrowfill@}{\slashedarrowfill@\relbar\relbar{\raisebox{0pc}{$\hspace{0.3em}\circ$}}\rightharpoonup}
\newcommand*{\xkmodto}[2][]{\ext@arrow
  0055{\rightkmodarrowfill@}{\;#1\;}{\;#2\;}}
\newcommand\adjunct[2]{\xymatrix@=8ex{\ar@{}[r]|{\top}\ar@<1mm>@/^2mm/[r]^{{#2}}
    & \ar@<1mm>@/^2mm/[l]^{{#1}}}}
\newcommand\adjunctop[2]{\xymatrix@=8ex{\ar@{}[r]|{\bot}\ar@<1mm>@/^2mm/[r]^{{#2}}
    & \ar@<1mm>@/^2mm/[l]^{{#1}}}}
\newcommand\adjunctrel[2]{\xymatrix@=8ex{\ar@{}[r]|{\top}\ar@<1mm>@/^2mm/[r]|-{\object@{|}}^{{#2}}
    & \ar@<1mm>@/^2mm/[l]|-{\object@{|}}^{{#1}}}}
\newcommand\adjunctrelop[2]{\xymatrix@=8ex{\ar@{}[r]|{\bot}\ar@<1mm>@/^2mm/[r]|-{\object@{|}}^{{#2}}
    & \ar@<1mm>@/^2mm/[l]|-{\object@{|}}^{{#1}}}}
\newcommand\adjunctmod[2]{\xymatrix@=8ex{\ar@{}[r]|{\top}\ar@<1mm>@/^2mm/[r]|-{\object@{o}}^{{#2}}
    & \ar@<1mm>@/^2mm/[l]|-{\object@{o}}^{{#1}}}}
\newcommand\adjunctmodop[2]{\xymatrix@=8ex{\ar@{}[r]|{\bot}\ar@<1mm>@/^2mm/[r]|-{\object@{o}}^{{#2}}
    & \ar@<1mm>@/^2mm/[l]|-{\object@{o}}^{{#1}}}}
\newcommand\adjunctkmod[2]{\xymatrix@=8ex{\ar@{}[r]|{\top}\ar@{-^>}@<1mm>@/^2mm/[r]|-{\object@{o}}^{{#2}}
    & \ar@{-^>}@<1mm>@/^2mm/[l]|-{\object@{o}}^{{#1}}}}
\newcommand\adjunctkmodop[2]{\xymatrix@=8ex{\ar@{}[r]|{\bot}\ar@{-^>}@<1mm>@/^2mm/[r]|-{\object@{o}}^{{#2}}
    & \ar@{-^>}@<1mm>@/^2mm/[l]|-{\object@{o}}^{{#1}}}}
\newcommand{\Uxi}{U_{\!_\xi}}
\newcommand{\Uxione}{U_{\!_{\xi_1}}}
\newcommand{\Uxitwo}{U_{\!_{\xi_2}}}
\newcommand{\monadfont}[1]{\mathbbm{#1}}
\newcommand{\mT}{\monadfont{T}}
\newcommand{\mU}{\monadfont{U}}
\newcommand{\monad}{(T,m,e)}
\newcommand{\umonad}{(U,m,e)}
\newcommand{\theoryfont}[1]{\mathscr{#1}}
\newcommand{\thU}{\theoryfont{U}}
\newcommand{\utheory}{(\mU,\V,\xi)}
\newcommand{\utheoryone}{(\mU,\V_1,\xi_1)}
\newcommand{\utheorytwo}{(\mU,\V_2,\xi_2)}
\newcommand{\FgtSet}[1]{O_{#1}}
\newcommand{\FgtOrd}[1]{\widetilde{O}_{#1}}
\DeclareMathAlphabet{\mathpzc}{OT1}{pzc}{m}{it}
\newcommand{\coyoneda}{\mathpzc{h}}
\newcommand{\DirLimCls}{\Phi_{\!\Delta}}
\newcommand{\Fne}{f_{n,\varepsilon}}
\DeclareMathOperator{\upc}{\uparrow\!}
\DeclareMathOperator{\downc}{\downarrow\!}
\DeclareMathOperator{\Cauchy}{Cauchy}
\DeclareMathOperator{\spec}{spec}
\newcommand{\doo}[1]{\overset{\centerdot}{#1}}
\newcommand{\Tast}{\circledast}
\newcommand{\df}[1]{\emph{\textbf{#1}}}
\newcommand{\field}[1]{\mathds{#1}}
\newcommand{\N}{\field{N}}
\newcommand{\zerone}{[0,1]}
\newcommand{\zerinf}{[0,\infty]}
\title{Convergence and quantale-enriched categories}
\author{Dirk Hofmann}
\address{Center for Research and Development in Mathematics and Applications,
  Department of Mathematics, University of Aveiro, 3810-193 Aveiro, Portugal}
\email{dirk@ua.pt}
\author{Carla Reis}
\address{Polytechnic Institute of Coimbra, College of Management and Technology
  of Oliveira do Hospital, 3400-124 Oliveira do Hospital, Portugal \and CIDMA,
  University of Aveiro, Portugal}
\email{carla.reis@estgoh.ipc.pt}
\thanks{Partial financial assistance by Portuguese funds through CIDMA (Center
  for Research and Development in Mathematics and Applications), and the
  Portuguese Foundation for Science and Technology (``FCT -- Funda\c{c}\~ao para
  a Ci\^encia e a Tecnologia''), within the project UID/MAT/04106/2013 is
  gratefully acknowledged.}
\date{\today}
\begin{document}

\begin{abstract}
  Generalising Nachbin's theory of ``topology and order'', in this paper we
  continue the study of quantale-enriched categories equipped with a compact
  Hausdorff topology. We compare these $\V$-categorical compact Hausdorff spaces
  with ultrafilter-quantale-enriched categories, and show that the presence of a
  compact Hausdorff topology guarantees Cauchy completeness and (suitably
  defined) codirected completeness of the underlying quantale enriched category.
\end{abstract}

\subjclass[2010]{%
  03G10, 
  18B30, 
  18B35, 
  18C15, 
  18C20, 
  18D20, 
  54A05, 
  54A20, 
  54E45, 
  54E70, 
}

\keywords{Ordered compact Hausdorff space, metric space, approach space, sober
  space, Cauchy completness, quantale-enriched category}

\maketitle

\section{Introduction}
\label{sec:introduction}

\subsection{Motivation}
\label{sec:motivation}

This paper continuous a line of research initiated in \cite{Tho09} which
combines Nachbin's theory of ``topology and order'' \cite{Nac50} with the
setting of monad-quantale enriched categories \cite{HST14}. Over the past
century, the combination of order structures with compact Hausdorff topologies
has proven to be very fruitful in various parts of mathematics: in the form of
spectral spaces they appear in Stone duality for distributive lattices
\cite{Sto38} and Hochster's characterisation of prime spectra of commutative
rings \cite{Hoc69}, the connection between spectral spaces and certain partially
ordered compact spaces was made explicit in \cite{Pri70,Pri72} (see also
\cite{Cor75,Fle00}), and was further extended to an equivalence between all
partially ordered compact spaces and stably compact topological spaces in the
1970's (see \cite{GHK+80}). Subsequently, stably compact spaces have also played
a central role in the development of domain theory, see \cite{Law11} for
details. In a more general context, compact Hausdorff spaces combined with the
structure of a quantale-enriched category have been essential in study of
topological structures as categories: they appear in the definition of ``dual
space'', still implicitly in \cite{CH09} and more explicitly in
\cite{Hof13,GH13,Hof14}. This notion turned out to be an essential ingredient in
the study of (co)completeness properties of monad-quantale enriched
categories. In \cite{CCH15} we also explain the connection of Nachbin's work
with the theory of multicategories \cite{Her00,Her01}.

Motivated by this development, we focus here on the ultrafilter monad and study
quantale-enriched categories equipped with a compact Hausdorff topology; our
examples include ordered, metric, and probabilistic metric compact Hausdorff
spaces. We show that the presence of a compact Hausdorff topology guarantees
Cauchy completeness and (suitably defined) codirected completeness of the
underlying quantale enriched category. Our investigation relies on a connection
between these $\V$-categorical compact Hausdorff spaces and monad-quantale
enriched categories which generalises the equivalence between partially ordered
compact spaces and stably compact topological spaces
(see~\cite[Section~III.5]{HST14}). Another important ingredient is the concept
of Cauchy completeness \emph{{\`a} la} Lawvere for monad-quantale enriched
categories as introduced in \cite{CH09}. In order to include probabilistic
metric spaces in our study, our setting is slightly weaker than the one
considered in \cite{CH09}. Due to this weaker assumptions, we have to overcome
some technical difficulties which force us to revise and extend some notions and
results of \cite{CH09}.

In order to explain our motivation more in detail, we find it useful to place it
in a historical context.

\subsection{Historical background}
\label{sec:hist-backgr}

Right from its origins at the beginning of the 20\textsuperscript{th} century,
one major concern of set-theoretic topology was the development of a
satisfactory notion of convergence. This in turn was motivated by the increasing
use of abstract objects in mathematics: besides numbers, mathematical theories
deal with sequences of functions, curves, surfaces,\dots. To the best of our
knowledge, a first attempt to treat convergence abstractly is presented in
\cite{Fre06}. Whereby the main contribution of \cite{Fre06} is the concept of a
(nowadays called) metric space, the starting point of \cite{Fre06} is actually
an abstract theory of sequential convergence. Fr\'{e}chet considers a function
associating to every sequence of a set $X$ a point of $X$, its convergence
point, subject to the following axioms:
\begin{description}
  \litem{d:item1}{(A)} Every constant sequence $(x,x,\dots)$ converges to $x$;
  \litem{d:item2}{(B)} If a sequence $(x_n)_{n\in\N}$ converges to $x$, then
  also every subsequence of $(x_n)_{n\in\N}$ converges to $x$.
\end{description}
Under these conditions, Fr\'{e}chet gave indeed a generalisation of
Weierstra\ss's theorem \cite{Fre04}; however, these constrains seem to be to
weak in general \emph{since the limes axioms \ref{d:item1}, \ref{d:item2} have
  still very little power \dots} (\cite[page~266]{Hau65}, original in German;
our translation).
In \cite{Hau14}, Hausdorff introduces the notion of topological space via
neighbourhood systems
and compares the notions of distance, topology and sequential convergence as
\emph{\dots the theory of distances seems to be the most specific, the limes
  theory the most general\dots} (\cite[page~211]{Hau65}, original in German; our
translation).
In the introduction to Chapter 7 ``Punktmengen in allgemeinen R\"aumen'',
Hausdorff affirms that \emph{the most beautiful triumph of set theory lies in
  its application to the point sets of the space, in the clarification and
  sharpening of the geometric notions\dots} (\cite[page~209]{Hau65}, original in
German; our translation).
According to Hausdorff, these geometric notions not only involve approximation
and distance, but also the theory of (partially) ordered sets to which he
dedicates a substantial part of his book. Thinking of an order relation on a set
$M$ as a function
\[
  f:M\times M \longrightarrow \{<,>,=\},
\]
Hausdorff also foresees that (\cite[page~210]{Hau65}, original in German; our
translation)
\begin{quote}
  \emph{Now there is nothing in the way of a generalisation of this idea, and we
    can think of an arbitrary function of pairs of points which associates to
    each pair $(a,b)$ of elements of a set $M$ a specific element $n=f(a, b)$ of
    a second set $N$. Generalising further, we can consider a function of
    triples, sequences, complexes, subsets, etc.}
\end{quote}
In particular, Hausdorff already presents metric spaces as a direct
generalisation of ordered sets where now $f$ associates to each pair $(a,b)$ the
distance between $a$ and $b$. This point of view was taken much further in
\cite{Law73}: not only the structure but also the axioms of an ordered set and
of a metric space are very similar and, moreover, can be seen as instances of
the definition of a category. Furthermore, Hausdorff sees also the definition of
a topological space as a generalisation of the concept of a partially ordered
set: instead of a relation between points, sequential convergence relates
sequences with their convergence points, and a neighbourhood system relies on a
relation between points and subsets. Surprisingly, also here the relevant axioms
on such relations can be formulated so that they resemble the ones of a
partially ordered set. We refer the reader to the monograph \cite{HST14} for an
extensive presentation of this theory and for further pointers to the
literature.

Clearly, Hausdorff considers topologies as generalised partial orders; however,
a more direct relation between the two concepts was only given more than twenty
years later. In \cite{Ale37}, Alexandroff observes that every partial order on a
set $X$ defines a topology, and from this topology one can reconstruct the given
order relation via
\begin{equation}\label{d:eq:1}
  x\le y\iff \text{$y$ belongs to every neighbourhood of $x$}.
\end{equation}
Furthermore, Alexandroff characterises the topological spaces obtained this way
as the so called ``diskrete R\"aume'', namely as those T0 spaces where the
intersection of open subsets is open. These spaces, without assuming the T0
separation axiom, are nowadays called \df{Alexandroff spaces}. In this paper we
depart from Hausdorff's nomenclature since partial orders seem to be more
frequent than total ones. Therefore we call a binary relation $\leq$ on a set
$X$ an \df{order relation} whenever $\leq$ is reflexive and transitive, and
speak of a \df{total order} whenever all elements are comparable. Furthermore,
we think of the anti-symmetry condition as a(n often unnecessary) separation
axiom. We write $\ORD$ for the category of ordered sets and monotone maps and,
with $\TOP$ denoting the category of topological spaces and continuous maps,
Alexandroff's construction extends to a functor
\[
  \TOP \longrightarrow\ORD
\]
which commutes with the underlying forgetful functors to the category $\SET$ of
sets and functions. The order relation defined by \eqref{d:eq:1} is now known as
the \df{specialisation order} of the space $X$. This order looses most of the
topological information of a space $X$ and does not seem to be very useful for
the study of topological properties. Nevertheless, there are some properties of
a space $X$ which are reflected in the specialisation order, in particular the
lower separation axioms:
\begin{itemize}
\item $X$ is T0 if and only if the specialisation order of $X$ is separated
  (=anti-symmetric); and
\item $X$ is T1 if and only if the specialisation order of $X$ is discrete.
\end{itemize}
The latter equivalence might be the reason why this order relation does not play
a dominant role in general topology. More interesting seems to be the reverse
question: which order properties are guaranteed by certain topological
properties? For instance, the following observation is very relevant for our
paper:
\begin{itemize}
\item if $X$ is sober, then the specialisation order of $X$ is directed complete
  (see \cite[Lemma~II.1.9]{Joh86}).
\end{itemize}

The specialisation order plays also a role in Hochster's study of ring spectra:
\cite{Hoc69} characterises the prime spectra of commutative rings as precisely
Stone's spectral spaces \cite{Sto38}. Here, for a commutative ring $R$, the
order of the topology on $\spec(R)$ should match the inclusion order of prime
ideals; by that reason Hochster considers the dual of the specialisation
order. Motivated by the convergence theoretic approach described below, in this
paper we will also consider this \emph{underlying} order of a topological space
instead of the specialisation order. A deep connection between topological
properties and order properties is made in \cite{Sco72} where injective
topological T0 spaces are characterised in terms of their underlying partial
order.

Whereby in the considerations above the order relation is the one induced by a
given topology, a different road was taken in \cite{Nac50} in his study of
ordered topological spaces where topology and order are two independent
structures, subject to a mild compatibility condition. This combination allows
for an substantial extension of the scope of various important notions and
results in topology, we mention here the concept of order-normality and the
Urysohn Lemma. Of special interest to us is a particular class of separated
ordered topological spaces, namely the compact ones, which are described in
\cite{Jun04} as ``precisely the T0 analogues of compact Hausdorff
spaces''. These spaces can be equivalently described in purely topological
terms: firstly, there is a comparison functor
\[
  K:\POSCH \longrightarrow\TOP
\]
between the category $\POSCH$ of separated ordered compact spaces and monotone
continuous maps and $\TOP$; secondly, this functor restricts to an equivalence
$\POSCH\simeq\STCOMP$ where $\STCOMP$ denotes the category of stably compact
spaces and spectral maps. These facts are known since the beginning of the
1970's and were first published in \cite{GHK+80}. To explain this connection
better, we find it useful to return to the story of convergence.

After Hausdorff's fundamental book \cite{Hau65}, the notion of convergence does
not seem to have played a prominent role in the development of topology. The
notion of sequence proved to be insufficient, and only in the 1930's
\cite{Bir37} appeared a characterisation of topological T1 spaces in terms of an
abstract concept of convergence based on the notion of \emph{Moore-Smith
  sequence} \cite{MS22,Moo15}. At the same time, Cartan introduced the concept
of filter convergence \cite{Car37,Car37a}, and this idea was met with enthusiasm
within the Bourbaki group \cite{Bou42}. However, it seems to us that this
enthusiasm was not shared by most treatments of topology as convergence plays
often only a secondary role. We refer to \cite{Cla13} for more information on
convergence and its history.

Using either filters or nets (as Moore-Smith sequence are typically called
nowadays), convergence finally conquered its appropriate place in topology. This
also led to the consideration of abstract (ultra)filter convergence structures,
we mention here the papers \cite{Gri60,Gri61,CF67} where topological convergence
structures are characterised among more general ones. In our opinion, the most
useful descriptions were obtained around 1970: firstly, Manes characterises
compact Hausdorff spaces as precisely the Eilenberg--Moore algebras for the
ultrafilter monad $\mU=\umonad$ on $\SET$ \cite{Man69}, and Barr characterises
topological spaces as the \emph{lax} algebras for the ultrafilter monad
\cite{Bar70}. More in detail, a compact Hausdorff space is given by a set $X$
together with a \emph{map} $\alpha:UX\to X$ so that the diagrams
\begin{align*}
  \xymatrix{X\ar[r]^{e_X}\ar[dr]_{1_X} & UX\ar[d]^{\alpha} \\   & X}
                                                                &&\text{and}&&
                                                                               \xymatrix{UUX\ar[r]^{m_X}\ar[d]_{U\alpha} & UX\ar[d]^{\alpha} \\ UX\ar[r]_{\alpha} & X}
\end{align*}
commute in $\SET$; whereby a general topological space is given by a set $X$
together with a \emph{relation} $a:UX\relto X$ so that the inequalities
\begin{align*}
  \xymatrix{X\ar[r]|-{\object@{|}}^{e_X}\ar[dr]|-{\object@{|}}_{1_X}^\le & UX\ar[d]|-{\object@{|}}^{a} \\  & X}
                                                                                                           &&\text{and}&&
                                                                                                                          \xymatrix{UUX\ar[r]|-{\object@{|}}^{m_x}\ar[d]|-{\object@{|}}_{Ua}\ar@{}[dr]|\le & UX\ar[d]|-{\object@{|}}^{a} \\ UX\ar[r]|-{\object@{|}}_{a} & X}
\end{align*}
hold in the ordered category $\REL$ of sets and relations. Elementwise, the
latter axioms read as
\begin{align*}
  e_X(x)\to x &&\text{and}&& (\fX\to\fx\;\&\;\fx\to x)\implies m_X(\fX)\to x,
\end{align*}
for all $x\in X$, $\fx\in UX$ and $\fX\in UUX$. Note that the second condition
talks about the convergence of an ultrafilter of ultrafilters $\fX$ to an
ultrafilter $\fx$, which comes from applying the ultrafilter functor $U$ to the
\emph{relation} $a:UX\relto X$. Hence, these description involves the additional
difficulty of extending the functor $U:\SET\to\SET$ in a suitable way to a
locally monotone endofunctor on $\REL$; but it is extremely useful since it does
not only provide axioms but also a calculus to deal with these axioms since they
are formulated within the structure of the ordered category $\REL$. Barr's
characterisation gives also new evidence to Hausdorff's intuition that
topological spaces are generalised orders, as the two axioms are clearly
reminiscent to the reflexivity and the transitivity condition defining an order
relation. We also note that the underlying order of a topology $a:UX\relto X$ is
simply the composite $a\cdot e_X:X\relto X$.

Using this language, Tholen \cite{Tho09} shows that an ordered compact Hausdorff
space can be equivalently described as a set $X$ equipped with an order relation
$\le:X\relto X$ and a compact Hausdorff topology $\alpha:UX\to X$ which must be
compatible in the sense that
\[
  \alpha:(UX,U\leq) \longrightarrow (X,\leq)
\]
is monotone. Moreover, the object part of the functor $K:\POSCH\to\TOP$
mentioned above can now be simply described by relational composition
\[
  (X,\leq,\alpha) \longmapsto (X,\leq\cdot\alpha);
\]
a simple calculation shows that $\leq\cdot\alpha:UX\relto X$ satisfies indeed
the two axioms of a topology. More importantly, as already initiated in
\cite{Tho09}, this approach paves the way to mix topology with metric structures
or other ``generalised orders'' in the spirit of Hausdorff; or better: enriched
categories in the spirit of Lawvere \cite{Law73}. Undoubtedly, topology is
already omnipresent in the study of metric spaces; however, there does not seem
to exist a systematic account in the literature thinking of metric and topology
as a generalisation of Nachbin's ordered topological spaces. This motivation
brings us to the following considerations.
\begin{itemize}
\item Instead of analysing a metric space $(X,d)$ using the topology
  \emph{induced} by $d$, we ask what properties of $d$ are ensured by a compact
  Hausdorff topology \emph{compatible} with $d$.
\item To answer this question, we look back and ask the same question for the
  ordered case. Surprisingly, there is a quick answer: since every separated
  ordered compact space corresponds to a stably compact space which is in
  particular sober, every separated ordered compact space has codirected infima
  and, by duality, also directed suprema.
\item To transport this argumentation back to the metric case, we need a metric
  variant of sober topological spaces, which is provided by the notion of sober
  approach space \cite{Low97,BLO06,Olm05}.
\item we also consider the notion of codirected completeness for metric spaces
  which implies Cauchy completeness. We compare this notion to other concepts of
  (co)directedness in the literature.
\end{itemize}

The principal aim of this paper is to present a theory which encompasses the
steps above, for quantale-enriched categories equipped with a compact Hausdorff
topology; our examples include ordered, metric, and probabilistic metric compact
Hausdorff spaces. We place this study in the general framework of topological
theories \cite{Hof07} and monad-quantale-enriched categories (see \cite{HST14}),
for the ultrafilter monad $\mU$ on $\SET$.

\section{Basic notions}
\label{sec:basic-notions}

In this section we recall various aspects of the theory of quantale-enriched
categories. All results presented here are well-known, for more information
about enriched category theory we refer to the classic \cite{Kel82} and to
\cite{Stu14}. In our examples we focus on quantales based on the lattices
$\two=\{0,1\}$, $[0,1]$, $[0,\infty]$ and the lattice $\ddf$ of distribution
functions.

\subsection{Quantale}
A \df{quantale} $\V=\quantale$ is a complete lattice $\V$, with the order
relation denoted by $\leq$, equipped with a monoidal structure given by a
commutative binary operation $\otimes$, with identity $k$, which distributes
over joins:
\[
  u\otimes\left(\bigvee_{i\in I} u_i\right)=\bigvee_{i\in I}(u\otimes u_i).
\]
Thus, by Freyd's Adjoint Functor Theorem, for each $u\in \V$, the monotone map
$u\otimes -:\V\to \V$ has a right adjoint $\hom(u,-)$ characterised by
\[
  u\otimes v\leq w \iff v\leq \hom(u,w),
\]
for all $v,w\in\V$. Our principal examples include the following.
\begin{examples}\label{exs:1}
  \begin{enumerate}
  \item\label{item:1} The two element chain $\two=\{0,1\}$ of truth values with
    $0\leq 1$ is a quantale for $\otimes=\&$ being the logical operation
    ``and''; in this case $\hom(u,v)$ is just the implication $u\implies
    v$. More general, every Heyting algebra with $\otimes =\wedge$ being infimum
    and the identity given by the top element $\top$ is a quantale.
  \item\label{item:2} The extended real half line $\overleftarrow{[0,\infty]}$
    ordered by the ``greater or equal'' relation $\geqslant$ becomes a quantale
    with the tensor product given by the usual addition $+$, denoted by
    $\Pp$. In this case, $\hom(u,v)=u\ominus v=\max\{u-v,0\}$, for all
    $u,v\in [0,\infty]$. According to (\ref{item:1}), one can also equip
    $\overleftarrow{[0,\infty]}$ with the infimum $\otimes = \max$ of this
    lattice, we denote the resulting quantale as $\Pm$.
  \item\label{item:3} Similarly to (\ref{item:2}), we consider the unit interval
    $[0,1]$ with the ``greater or equal'' relation $\geqslant$ and the tensor
    \[
      u\oplus v=\min\{1,u+v\},
    \]
    for all $u,v\in [0,1]$. This quantale will be denotes as
    $\overleftarrow{[0,1]}_\oplus$.
  \item\label{d:item:3} The quantales introduced in (\ref{item:2}) and
    (\ref{item:3}) can be more uniformly described using the unit interval
    $[0,1]$ equipped with the usual order $\leqslant$. In fact, $[0,1]$ admits
    several interesting quantale structures, the most important ones to us are
    the minumum $\wedge$, the usual multiplication $*$, and the Lukasiewicz sum
    defined by $u\odot v=\max\{0,u+v-1\}$, for all $u,v\in [0,1]$. The
    corresponding operation $\hom$ is given, respectively, by
    \begin{align*}
      \hom(u,v)&=
      \begin{cases}
        1, & \text{if }u \leq v\\v, & \text{else}
      \end{cases},&
                     \hom(u,v)&=
                     \begin{cases}
                       \min\{\frac{v}{u},1\}, & \text{if }u \neq 0  \\
                       1, & u=0
                     \end{cases},
    \end{align*}
    \[
      \hom(u,v)=\min\{1,1-u+v\}=1-\max\{0,u-v\},
    \]
    for $u,v \in [0,1]$. We will denote these quantales by $\zerone_{\wedge}$,
    $\zerone_{*}$, and $\zerone_{\odot}$, respectively. Then, through the map
    \[
      [0,\infty] \longrightarrow [0,1],\,u\longmapsto e^{-u}
    \]
    with $e^{-\infty}=0$, the quantale $\Pp$ is isomorphic to $\zerone_{*}$ and
    $\Pm$ is isomorphic to $[0,1]_\wedge$. Finally, the quantale
    $\overleftarrow{[0,1]}_\oplus$ of (\ref{item:3}) is isomorphic to the
    quantale $[0,1]_\odot$, via the lattice isomorphism $u \mapsto 1-u$.
  \item Another way to equip the unit interval $\zerone$ with a quantale
    structure is considering the usual order and $\otimes$ is given by the
    \df{nilpotent minimum}
    \[
      u\otimes v=
      \begin{cases}
        \min\{u,v\} & \text{if }u+v> 1,\\
        0 & \text{else}
      \end{cases}
    \]
    for $u,v \in [0,1]$, for which $\hom(u,v)=\max\{1-u,v\}$. This is a
    classical example of a tensor in $\zerone$ that is left continuous but not
    continuous.
  \item The set
    \[
      \ddf = \{f:\zerinf \longrightarrow \zerone \mid
      f(\alpha)=\bigvee_{\beta<\alpha}f(\beta)\text{ for all
      }\alpha\in[0,\infty]\}
    \]
    of left continuous distribution functions, ordered pointwise, is a complete
    lattice. Here the supremum of a family $(h_i)_{i\in I}$ of elements of
    $\ddf$ can be calculated pointwise as
    $h(\alpha)=\bigvee_{i\in I}h_i(\alpha)$, for all $\alpha\in [0,\infty]$. The
    infimum of an arbitrary collection of elements of $\ddf$ cannot be obtained
    by an analogous process since the point-wise infimum of a family of left
    continuous maps need not be left continuous. However, the infimum of a
    family $(f_i)_{i\in I}$ in $\ddf$ is given by
    \[
      \bigwedge_{i\in I}f_i(\alpha)=\sup_{\beta<\alpha}\inf_{i\in I}f_i(\beta),
    \]
    for every $\alpha \in \zerinf$, due to the adjunction $i\dashv c$, where $i$
    is the embedding $ \ddf \to \ORD (\zerinf ,\zerone)$ and
    $c: \ORD (\zerinf ,\zerone) \to \ddf$, such that
    $c(f)(\alpha)=\sup_{\beta <\alpha}f(\beta)$.

    For each of the tensor products $\otimes$ on $[0,1]$ defined in
    (\ref{d:item:3}), $\ddf$ becomes a quantale with
    \[
      f\otimes g (\gamma)=\bigvee_{\alpha+\beta\leqslant \gamma}f(\alpha)\otimes
      g(\beta),
    \]
    for all $\gamma\in [0,\infty]$; the identity is given by
    \[
      f_{0,1}(\alpha)=
      \begin{cases}
        0 & \text{if }\alpha=0,\\
        1 & \text{else}.
      \end{cases}
    \]
    For more information about this quantale we refer to
    \cite{Fla97,HR13,CH16_tmp}.
  \end{enumerate}
\end{examples}

\subsection{Completely distributive lattices}

In this subsection we recall some properties of complete lattices and quantales
which will be useful in the sequel. First of all, we call a quantale
$\V=\quantale$ \df{non-trivial} whenever $k>\bot$. More generally:

\begin{definition}
  The neutral element $k$ of a quantale $\V=\quantale$ is called
  \df{$\vee$-irreducible} whenever $k>\bot$ and, for all $u,v \in \V$,
  $k\leq u\vee v$ implies $k\leq u$ or $k\leq v$.
\end{definition}

For an ordered set $X$, we denote by $\Pdn X$ the complete lattice of down sets
of $X$ ordered by inclusion. The ordered set $X$ can be embedded into $\Pdn X$
by
\[
  \downarrow_X:X \longrightarrow \Pdn X,\,x \longmapsto \downc x=\{y\in X \mid
  y\le x\};
\]
and $X$ is complete if and only if $\downarrow_X:X\to \Pdn X$ has a left adjoint
$\bigvee_X:\Pdn X \to X$. In this paper we will often require that the complete
lattice $\V$ is completely distributive (see \cite{Ran52,Woo04}), therefore we
recall now:

\begin{definition}
  A complete ordered set $X$ is called \df{completely distributive} whenever the
  map $\bigvee_X:\Pdn X \to X$ preserves all infima.
\end{definition}

Hence, since $\Pdn X$ is complete, the lattice $X$ is completely distributive if
and only if $\bigvee_X$ has a left adjoint $ \Downarrow_X:X\to \Pdn X$. We
recall that
\begin{align*}
  \Downarrow_X \dashv \bigvee_X
  && \iff
  && \forall x\in X\, \forall A\in \Pdn X\,.\,(\Downarrow_X x\subseteq A \iff x\leq \bigvee_X A).
\end{align*}

\begin{definition}
  Let $X$ be a complete ordered set $X$. For all $x,y\in X$, \df{$x$ is totally
    below $y$ ($x\ll y$)} whenever, for all $A\in \Pdn X$,
  \begin{align*}
    y\leq \bigvee A \implies x\in A. 
  \end{align*}
\end{definition}

\begin{proposition}
  Let $\ll$ be the totally below relation in a complete ordered set $X$ with
  order relation $\le$. Then, for all $x,y,z \in X$:
  \begin{enumerate}
  \item $x\ll y \implies x\leq y$;
  \item $x\leq y \ll z \implies x\ll z$;
  \item $x\ll y\leq z \implies x \ll z$;
  \item $x\ll y \implies \exists z\in X\,.\,x\ll z \ll y$.
  \end{enumerate}
\end{proposition}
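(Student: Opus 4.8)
The plan is to read off items (1)--(3) directly from the defining property of $\ll$, choosing an appropriate down set or exploiting down-closure, and to treat item (4), the interpolation property, as the genuinely substantial statement, for which I would pass to the left adjoint $\Downarrow_X$ of $\bigvee_X$; that is, I would use that $X$ is completely distributive.

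For (1), take $A=\downc y\in\Pdn X$. Since $y$ is the greatest element of $\downc y$ we have $\bigvee_X\downc y=y$, so $y\le\bigvee_X A$; the hypothesis $x\ll y$ then forces $x\in\downc y$, i.e.\ $x\le y$. For (2), assume $x\le y\ll z$ and let $A\in\Pdn X$ satisfy $z\le\bigvee_X A$; then $y\in A$ because $y\ll z$, and $x\in A$ because $A$ is a down set and $x\le y$, so $x\ll z$. For (3), assume $x\ll y\le z$ and let $A\in\Pdn X$ satisfy $z\le\bigvee_X A$; then $y\le z\le\bigvee_X A$, whence $x\in A$ since $x\ll y$, giving $x\ll z$. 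In all three cases no more than completeness of $X$ is used.

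For (4) I would first record the identity $\{z\in X\mid z\ll y\}=\Downarrow_X y$, valid because $\Downarrow_X$ is characterised by $\Downarrow_X y\subseteq A\iff y\le\bigvee_X A$. Indeed, if $z\in\Downarrow_X y$ and $y\le\bigvee_X A$ then $\Downarrow_X y\subseteq A$, hence $z\in A$, so $z\ll y$; conversely, taking $A=\Downarrow_X y$ shows $y\le\bigvee_X\Downarrow_X y$, and then $z\ll y$ gives $z\in\Downarrow_X y$. Moreover $\bigvee_X\Downarrow_X y=y$: the inequality $y\le\bigvee_X\Downarrow_X y$ was just noted, while $\Downarrow_X y\subseteq\downc y$ (by (1)) yields $\bigvee_X\Downarrow_X y\le\bigvee_X\downc y=y$.

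Now set $A=\bigcup_{z\in\Downarrow_X y}\Downarrow_X z$, a union of down sets and hence a down set. Since $\bigvee_X$ is a left adjoint it preserves joins, and joins in $\Pdn X$ are unions, so $\bigvee_X A=\bigvee_{z\in\Downarrow_X y}\bigvee_X\Downarrow_X z=\bigvee_{z\in\Downarrow_X y}z=\bigvee_X\Downarrow_X y=y$. In particular $y\le\bigvee_X A$, whence $\Downarrow_X y\subseteq A$ by the defining adjunction. Unwinding, every $x\ll y$ lies in $\Downarrow_X z$ for some $z\in\Downarrow_X y$, that is $x\ll z\ll y$, which is exactly (4). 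The main obstacle is this last item: it genuinely depends on complete distributivity (the mere existence of $\bigvee_X$ is not enough, as a diamond-type lattice already shows), and the crux is the identification $\{z\mid z\ll y\}=\Downarrow_X y$ together with the join-preservation of $\bigvee_X$, which lets the interpolating element be produced for all $x\ll y$ at once rather than one at a time.
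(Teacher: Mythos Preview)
Your argument is correct. The paper states this proposition without proof (it is quoted as a standard fact, with \cite{Woo04} the ambient reference), so there is no ``paper's proof'' to compare against; your treatment of (1)--(3) is the expected one, and your proof of (4) via the adjunction $\Downarrow_X\dashv\bigvee_X$ and the down set $A=\bigcup_{z\ll y}\Downarrow_X z$ is the standard interpolation argument.

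You also noticed something the paper glosses over: as literally stated (``complete ordered set''), item~(4) is false---your diamond example $M_3$ exhibits $0\ll 1$ with no interpolant---and the proof genuinely needs the left adjoint $\Downarrow_X$, i.e.\ complete distributivity. This is consistent with the subsection heading and with the sentence immediately following the proposition, where the identification $\Downarrow y=\{x\mid x\ll y\}$ is recorded only for completely distributive $X$; the hypothesis of complete distributivity should be read as implicit in~(4).
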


If $X$ is a completely distributive lattice, then, for every $y\in X$,
\[
  \Downarrow y=\bigcap\{A\in \Pdn X \mid y\leq \bigvee A \};
\]
therefore $x\in \Downarrow y$ if and only if $x\ll y$.

\begin{theorem}
  A complete lattice $X$ is completely distributive if and only if every
  $y\in X$ can be expressed as $y=\bigvee \{x\in X \mid x\ll y\}$.
\end{theorem}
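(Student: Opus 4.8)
The plan is to prove both directions of the equivalence, using the characterisation of complete distributivity via the adjunction $\Downarrow_X \dashv \bigvee_X$ recalled just before the statement, together with the explicit description $x \in \Downarrow y \iff x \ll y$ valid in the completely distributive case.

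For the forward direction, suppose $X$ is completely distributive. Then $\bigvee_X$ has a left adjoint $\Downarrow_X$, and the displayed formula $\Downarrow y = \bigcap\{A \in \Pdn X \mid y \le \bigvee A\}$ identifies $\Downarrow y$ with the set $\{x \mid x \ll y\}$. Applying $\bigvee_X$ to this down set and using the counit of the adjunction $\Downarrow_X \dashv \bigvee_X$ — that is, the inequality $\bigvee_X \Downarrow y \le y$ — together with the unit giving the reverse inequality, I would conclude $y = \bigvee\{x \mid x \ll y\}$. Concretely, from $\Downarrow y \subseteq \Downarrow y$ and the adjunction we get $y \le \bigvee \Downarrow y$; and since $\Downarrow y$ is a down set consisting of elements below $y$ (by item (1) of the Proposition, $x \ll y \implies x \le y$), we get $\bigvee \Downarrow y \le y$. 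Hence equality holds.

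For the converse, assume every $y$ satisfies $y = \bigvee\{x \mid x \ll y\}$. Here the main work is to produce the left adjoint $\Downarrow_X$ of $\bigvee_X$, which by the recalled criterion is equivalent to complete distributivity. I would define $\Downarrow_X y := \{x \in X \mid x \ll y\}$ and verify the adjunction condition $\Downarrow_X y \subseteq A \iff y \le \bigvee A$ for every down set $A$. The implication $\Downarrow_X y \subseteq A \implies y \le \bigvee A$ follows immediately from the hypothesis: $y = \bigvee \Downarrow_X y \le \bigvee A$ since $\Downarrow_X y \subseteq A$. For the reverse implication, $y \le \bigvee A$ forces every $x \ll y$ to lie in $A$ directly from the definition of the totally below relation (taking the down set to be $A$ itself), so $\Downarrow_X y \subseteq A$. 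One should also check that $\Downarrow_X$ is monotone and lands in $\Pdn X$, but monotonicity is automatic for any map admitting the adjunction characterisation, and $\Downarrow_X y$ is a down set by item (2) of the Proposition ($x \le x' \ll y \implies x \ll y$).

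The main obstacle, though largely bookkeeping, is ensuring the two relations ``$x \in \Downarrow y$'' and ``$x \ll y$'' are handled consistently: in the forward direction I rely on the displayed formula for $\Downarrow y$ (which presupposes complete distributivity, so is available there), whereas in the converse I cannot assume that formula and must work directly from the definition of $\ll$ to establish the adjunction from scratch. The subtle point is that the definition of $x \ll y$ quantifies over \emph{all} down sets $A$ with $y \le \bigvee A$, so verifying $\Downarrow_X y \subseteq A$ from $y \le \bigvee A$ is an instantiation of that universal condition, while verifying $y \le \bigvee A$ from $\Downarrow_X y \subseteq A$ genuinely needs the decomposition hypothesis. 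Once the adjunction $\Downarrow_X \dashv \bigvee_X$ is in hand, complete distributivity follows by the criterion recalled immediately after the definition.
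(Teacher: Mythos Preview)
The paper does not supply a proof of this theorem; it is stated as a recalled result in the preliminaries (with references to \cite{Ran52,Woo04}), so there is no proof to compare against directly.

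That said, your argument is correct and is the standard one. The forward direction uses exactly the ingredients the paper has lined up: the adjunction $\Downarrow_X\dashv\bigvee_X$ and the identification $\Downarrow y=\{x\mid x\ll y\}$; the unit of the adjunction gives $y\le\bigvee\Downarrow y$, and item~(1) of the preceding Proposition gives the reverse inequality. For the converse you correctly avoid assuming the displayed formula for $\Downarrow y$ and instead define $\Downarrow_X y$ via $\ll$ and verify the adjunction bi-implication directly from the definition of the totally below relation together with the decomposition hypothesis; item~(2) of the Proposition ensures $\Downarrow_X y$ is a down set. One small wording slip: in the forward direction you mention the ``counit'' giving $\bigvee_X\Downarrow y\le y$, but the inequality you actually use (and justify) comes from $x\ll y\Rightarrow x\le y$, not from the counit of the adjunction (which would be $\Downarrow_X\bigvee A\subseteq A$). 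This is cosmetic and does not affect the argument.
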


\begin{remark}
  A complete ordered set $X$ is completely distributive if and only if $X^\op$
  is so (see \cite{Woo04}).
\end{remark}

\begin{examples}\label{exs:2}
  \begin{enumerate}
  \item The complete lattice $\two$ is completely distributive where $x\ll y$ if
    and only if $y=1$.
  \item The lattices $\zerone$ and $\zerinf$, ordered by $\leqslant$, are
    completely distributive with $\ll$ being the usual smaller relation
    $<$. Similarly, $\overleftarrow{[0,1]}$ and $\overleftarrow{[0,\infty]}$,
    with the ``greater or equal relation'' $\geqslant$, are completely
    distributive where the totally below relation is the larger relation $>$.
  \item In order to show that the complete lattice $\ddf$ is complete
    distributive, it is useful to introduce some special elements that will
    allow a more simplified description of $\ddf$ and of its properties. The
    step functions $\Fne$, with $n\in \zerinf$ and $\varepsilon \in \zerone$,
    are elements of $\ddf$, defined by
    \[
      \Fne(\alpha)=
      \begin{cases}
        0 & \text{  if  } \alpha\leqslant n, \\
        \varepsilon & \text{ if } \alpha>n;
      \end{cases}
    \]
    for all $\alpha\in \zerinf$. It is shown in \cite{Fla97} that, for all
    $f, \Fne \in \ddf$, $\Fne \ll f$ if and only if $\varepsilon < f(n)$. This
    observation allows to write every element $f\in \ddf$ as the supremum of
    those step functions totally bellow $f$:
    $f=\bigvee\{\Fne \in \ddf \mid \Fne \ll f\}$. A complete description of the
    totally below relation on $\ddf$ can be found in \cite{CH16_tmp}.
  \end{enumerate}
\end{examples}

\begin{definition}
  For a quantale $\V=\quantale$, we say that $k$ is \df{approximated} whenever
  the set
  \[
    \Dnw k=\{u\in\V\mid u\ll k\}
  \]
  is directed and $k=\bigvee\Dnw k$.
\end{definition}

We note that in each of the quantales of Examples~\ref{exs:2} the neutral
element is approximated.

\begin{proposition}\label{d:prop:4}
  Let $\V=\quantale$ be quantale where $k$ is approximated. Then $k$ is
  $\vee$-irreducible and
  \[
    k=\bigvee_{u\ll k}u\otimes u.
  \]
\end{proposition}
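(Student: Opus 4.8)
The plan is to exploit the directedness of $\Dnw k$ twice: once to promote a disjunction about the elements totally below $k$ into a disjunction about $k$ itself, and once to collapse a double supremum onto its diagonal. I begin with the easy observation that $k>\bot$. Since $\Dnw k$ is directed it is in particular non-empty, so some $u\ll k$ exists; were $k=\bot$, the definition of $\ll$ applied to the down-set $A=\emptyset$ (for which $\bot\le\bigvee\emptyset=\bot$ holds vacuously) would force $u\in\emptyset$, which is absurd. Hence $k\neq\bot$, i.e.\ $k>\bot$.

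For the implication in $\vee$-irreducibility, assume $k\le u\vee v$ and consider the down-set $A=\downc u\cup\downc v$, whose supremum is $u\vee v\ge k$. For every $w\ll k$ the definition of the totally below relation yields $w\in A$, that is, $w\le u$ or $w\le v$. Now suppose, for a contradiction, that $k\not\le u$ and $k\not\le v$. As $k=\bigvee\Dnw k$, from $k\not\le u$ we obtain some $w_1\ll k$ with $w_1\not\le u$, hence $w_1\le v$; symmetrically there is $w_2\ll k$ with $w_2\le u$ and $w_2\not\le v$. Choosing by directedness a single $w\ll k$ above both $w_1$ and $w_2$, the alternative $w\le u$ contradicts $w_1\le w$, while $w\le v$ contradicts $w_2\le w$. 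Therefore $k\le u$ or $k\le v$, which together with $k>\bot$ shows that $k$ is $\vee$-irreducible.

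For the identity, first note the trivial inequality $\bigvee_{u\ll k}u\otimes u\le k$, since $u\ll k$ gives $u\le k$ and hence $u\otimes u\le k\otimes k=k$. The reverse inequality is the heart of the argument. Using that $\otimes$ preserves suprema in each variable (in the second by commutativity) together with $k=\bigvee\Dnw k$ and $k\otimes k=k$, I compute
\[
  k=k\otimes k=\Bigl(\bigvee_{u\ll k}u\Bigr)\otimes\Bigl(\bigvee_{u'\ll k}u'\Bigr)=\bigvee_{u,u'\ll k}u\otimes u'.
\]
The remaining step, which I expect to be the main obstacle, is to show that this double supremum equals the diagonal one $\bigvee_{u\ll k}u\otimes u$. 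The inequality $\ge$ is clear; for $\le$, given $u,u'\ll k$ I invoke directedness of $\Dnw k$ to find $w\ll k$ with $u\le w$ and $u'\le w$, so that $u\otimes u'\le w\otimes w\le\bigvee_{w\ll k}w\otimes w$ by monotonicity of $\otimes$. Taking the supremum over all pairs gives the desired inequality, and hence $k=\bigvee_{u\ll k}u\otimes u$. Note that, pleasantly, neither part requires complete distributivity of $\V$ nor the interpolation property of $\ll$: directedness of $\Dnw k$ alone carries the argument.
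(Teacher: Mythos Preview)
Your proof is correct. The paper itself does not argue either claim directly: it merely cites \cite[Remark~4.21]{HR13} for $\vee$-irreducibility and \cite[Theorem~1.12]{Fla92} for the identity $k=\bigvee_{u\ll k}u\otimes u$, after noting (as you do) that directedness of $\Dnw k$ forces $k>\bot$. Your argument is thus a self-contained unpacking of what those references deliver. The two pieces you supply---using directedness to derive a contradiction from the pair $w_1,w_2$, and collapsing the double supremum $\bigvee_{u,u'\ll k}u\otimes u'$ onto its diagonal via a common upper bound---are exactly the expected direct proofs, and your closing remark that neither complete distributivity nor interpolation of $\ll$ is needed is a worthwhile observation that the paper's citation-based proof obscures.
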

\begin{proof}
  Assume that $k$ is approximated. First note that $k>\bot$ since, being
  directed, $\Dnw k$ is non-empty. Furthermore, $k$ is $\vee$-irreducible by
  \cite[Remark 4.21]{HR13}, and the second assertion follows from \cite[Theorem
  1.12]{Fla92}.
\end{proof}

\subsection{$\V$-relations}
\label{ssec:V-rel}

For a quantale $\V=\quantale$, a \df{$\V$-relation} $r: X\relto Y$ is a map
$X\times Y \to \V$.  Given $\V$-relations $r: X\relto Y$ and $s:Y\relto Z$,
their composite $s\cdot r :X\relto Z$ is defined by
\[
  s\cdot r(x,z)=\bigvee_{y\in Y}r(x,y)\otimes s(y,z),
\]
and the identity on $X$ is the $\V$-relation $1_X:X\relto X$ given by
\[
  1_X(x,y)=
  \begin{cases}
    k&\text{if }x=y,\\
    \bot, & \text{else}.
  \end{cases}
\]
The resulting category of sets and $\V$-relations is denoted by
$\Rels{\V}$. Similarly to the case of the identity relation, every map
$f:X\to Y$ can be seen as a $\V$-relation $f:X\relto Y$ with
\[
  f(x,y)=
  \begin{cases}
    k & \text{if }f(x)=y,\\
    \bot & \text{else;}
  \end{cases}
\]
this construction defines a functor $\SET\to\Rels{\V}$. We note that this
functor is faithful if and only if $\V$ is non-trivial.

The set $\Rels{\V}(X,Y)$ of $\V$-relations from $X$ to $Y$ is actually a
complete ordered set where the supremum of a family
$(\varphi_i: X \relto Y)_{i\in I}$ is calculated pointwise. Since the tensor
product of $\V$ preserves suprema, for every $\V$-relation $r:X \relto Y$, the
maps $(-)\cdot r:\Rels{\V}(Y,Z)\to \Rels{\V}(X,Z)$ and
$r\cdot (-):\Rels{\V}(Z,X)\to \Rels{\V}(Z,Y)$ preserve suprema as well; which
tells as that $\Rels{\V}$ is actually a quantaloid (see \cite{Ros96a}). In
particular, both maps have right adjoints in $\ORD$.

Here a right adjoint $-\blackleft r$ of $-\cdot r$ must give, for each
$ t:X\relto Z$, the largest $\V$-relation of type $Y\relto Z$ whose composite
with $r$ is less or equal $t$,
\[
  \xymatrix{X\ar[r]|-{\object@{|}}^t \ar[d]|-{\object@{|}}_r & Z\\
    Y\ar@{..>}[ur]|-{\object@{|}}^\leq}
\]
and we call $t \blackleft r$ the \df{extension of $t$ along $r$}. Explicitly,
\[
  t\blackleft r(y,z)=\bigwedge_{x\in X}\hom(r(x,y),t(x,z)).
\]
Similarly, a right adjoint $r\blackright-$ of $r\cdot-$ must give, for each
$t:Z\relto Y$, the largest $\V$-relation of type $Z\relto X$ whose composite
with $r$ is less or equal $t$.
\[
  \xymatrix{Y &
    Z\ar[l]|-{\object@{|}}_t\ar@{..>}[dl]|-{\object@{|}}_\leq\\
    X\ar[u]|-{\object@{|}}^r}
\]
The $\V$-relation $r\blackright t$ is called the \df{lifting of $t$ along $r$},
and can be calculated as
\[
  r\blackright t(z,x)=\bigwedge_{y\in Y}\hom(r(x,y),t(z,y)).
\]

Another important feature which comes from the fact that $\Rels{\V}$ is locally
ordered, is the possibility to define adjoint $\V$-relations: $r:X\relto Y$ is
left adjoint to $s:Y\relto X$ if and only if $1_X\leq s\cdot r$ and
$r\cdot s \leq 1_Y$, which in pointwise notation, gives, for all $x\in X$,
\begin{align*}
  k\leq \bigvee_{y\in Y}r(x,y)\otimes s(y,z)
  && \text{and}
  && \forall y,y'\in Y\,.\,(y\neq y' \implies s(y,x)\otimes r(x,y')=\bot ).
\end{align*}

For each $\V$-relation $r: X\relto Y$ one can consider its opposite
$r^{\circ}:Y\relto X$ given by $r^{\circ}(x,y)=r(y,x)$, for all $x\in X$ and all
$y\in Y$. This operation satisfies
\begin{align*}
  1_X^{\circ} =1_X, && (s\cdot r)^{\circ}=r^{\circ} \cdot s^{\circ},  &&  (r^{\circ})^{\circ}=r,
\end{align*}
and
\[
  r_1\le r_2 \iff r_1^{\circ}\le r_2^{\circ},
\]
for all $r,r_1,r_2:X\relto Y$ and $s:Y\relto Z$. Hence, this construction
defines a locally monotone functor $(-)^\op : \Rels{\V}^\op \to\Rels{\V}$. We
also note that $f\dashv f^\circ$ in $\Rels{\V}$, for every function $f:X\to Y$.

\subsection{$\V$-categories}
\label{ssec:quant-enrich-categ}

We introduce now categories enriched in a quantale $\V$.

\begin{definition}
  Let $\V=\quantale$ be a quantale. A \df{$\V$-category} is a pair $(X,a)$
  consisting of a set $X$ and a $\V$-relation $a:X\relto X$ satisfying
  $1_X\leq a$ and $a \cdot a \leq a$; in pointwise notation:
  \begin{align*}
    k\leq a(x,x) && \text{and}  && a(x,y)\otimes a(y,z)\leq a(x,z),
  \end{align*}
  for all $x,y,z \in X$. A \df{$\V$-functor} $f:(X,a)\to (Y,b)$ between
  $\V$-categories is a map $f:X\to Y$ such that $f\cdot a \leq b\cdot f$;
  equivalently, for all $x, x' \in X$,
  \[
    a(x,x')\leq b(f(x),f(x')).
  \]
\end{definition}

With the usual composition of maps and the identity maps, $\V$-categories and
$\V$-functors provide the category $\Cats{\V}$.  Note that $1_X\le a$ implies
$a\leq a\cdot a$, which implies $a\cdot a=a$, for every $\V$-category
$(X,a)$. The quantale $\V$ is itself a $\V$-category with structure given by
$\hom:\V\times \V \to\V$. To every $\V$-category $(X,a)$ one can associate its
dual $\V$-category $X^\op =(X, a^\circ)$, and this construction defines a
functor
\[
  (-)^\op:\Cats{\V}\longrightarrow\Cats{\V}
\]
commuting with the canonical forgetful functor $\FgtSet{\V}:\Cats{\V}\to\SET$.

\begin{definition}
  A $\V$-category $X$ is called \df{symmetric} whenever $X=X^\op$.
\end{definition}

Due to the fact that the forgetful functor $\FgtSet{\V}:\Cats{\V}\to \SET$ is
topological (see \cite{AHS90,CH03}), the category $\Cats{\V}$ admits all limits
and colimits. Moreover, $\FgtSet{\V}:\Cats{\V}\to \SET$ has a left adjoint and
the free $\V$-category over the one-element set $1=\{\star\}$ is given by
$G=(1,k)$, where $k(\star,\star)=k$. For every set $X$, we have the $X$-fold
power $\V^X$ of the $\V$-category $\V$ whose elements are maps $\varphi:X\to\V$
and, for maps $\varphi_1,\varphi_2:X\to\V$,
\[
  [\varphi_1,\varphi_2]:=\varphi_2\blackleft\varphi_1=\bigwedge_{x\in
    X}\hom(\varphi_1(x),\varphi_2(x))
\]
describes the $\V$-categorical structure of $\V^X$. Another example is the
product of two $\V$-categories $(X,a)$ and $(Y,b)$, which is the $\V$-category
$X\times Y=(X\times Y,d)$, where, for $(x,y),(x',y')\in X\times Y$,
$d((x,y),(x',y'))=a(x,x')\wedge b(y,y')$. However one can also consider the
structure $a\otimes b$ on $X\times Y$:
$a\otimes b ((x,y),(x',y'))=a(x,x')\otimes b(y,y')$. Both products are
commutative and associative but the neutral objects differ in general:
$(1,\top)$ is the neutral object for the first product while $G=(1,k)$ is the
neutral object for the second.

We consider now the quantales of Examples~\ref{exs:1}.
\begin{examples}\label{exs:V-Cats}
  \begin{enumerate}
  \item The objects of $\Cats{\two}$ are ordered sets (that is, sets equipped
    with a reflexive and transitive binary relation) and the morphisms are
    monotone maps; thus, $\Cats{\two}\simeq \ORD$.
  \item A $\Pp$-category is a generalised metric space in the sense of
    \cite{Law73} and a $\Pp$-functor is a non-expansive map. We write $\MET$ for
    the resulting category, that is, $\Cats{\Pp}\simeq\MET$. Due to the lattice
    isomorphism $\Pp\simeq \zerone_{*}$, also $\Cats{\zerone_{*}}\simeq \MET$.
    Similarly, for $\V =\Pm$, a $\V$-category is a (generalised) ultrametric
    space and, since $\Pm \simeq \zerone_\wedge$, we have
    $\Cats{\Pm}\simeq\Cats{\zerone_\wedge}\simeq \UMET$. Finally, we can
    interpret $\overleftarrow{[0,1]}_\oplus$-categories and
    $[0,1]_\odot$-categories as bounded-by-$1$ metric spaces and
    $[0,1]_\odot$-functors as non-expansive maps, so that
    $\Cats{\overleftarrow{[0,1]}_\oplus}\simeq\Cats{[0,1]_\odot}\simeq\BMET$.
  \item A $\ddf$-category consists of a set equipped with a structure
    $a:X\times X \to \ddf$ such that, for all $x,y,z\in X$ and $t\in \zerinf$:
    \begin{align*}
      1\leqslant a(x,y)(t)
      && \text{and}
      && \bigvee_{q+r\leqslant t} a(x,y)(q)\otimes a(y,z)(r)\leqslant a(x,z)(t),
    \end{align*}
    and a $\ddf$-functor $f:(X,a)\to (Y,b)$ satisfies
    $a(x,y)(t)\leqslant b(f(x),f(y))(t)$, for $x,y\in X$ and $t\in \zerinf$.
    Therefore the category $\Cats{\ddf}$ is isomorphic to the category of
    (generalised) probabilistic metric spaces $\PROBMET$. The classical
    definition of probabilistic metric space (see \cite{Men42,SS83}) demands
    that $(X,a)$ is separated (see Definition~\ref{d:def:3}), symmetric and
    finitary ($a(x,y)\in \ddf$ should be finite for all $x,y \in X$). A detailed
    study of probabilistic metric spaces as enriched categories can be found in
    \cite{HR13}.
  \end{enumerate}
\end{examples}

\begin{definition}
  Let $\V_1$ and $\V_2$ be quantales, we write $\otimes$ for the multiplication
  in both $\V_1$ and $\V_2$, and $k_1$ denotes the neutral element of $\V_1$ and
  $k_2$ the neutral element of $\V_2$. A \df{lax quantale morphism}
  $\varphi:\V_1\to\V_2$ is a monotone map between the underlying ordered sets
  satisfying
  \begin{align*}
    k_2\le\varphi(k_1)
    &&\text{and}
    && \varphi(u)\otimes\varphi(v)\le\varphi(u\otimes v),
  \end{align*}
  for all $u,v\in\V_2$.
\end{definition}
These properties ensure that the mapping
\[
  (X,a) \longmapsto (X,\varphi a)
\]
sends $\V_1$-categories to $\V_2$-categories; hence, this construction defines a
functor
\[
  B_\varphi:\Cats{\V_1} \longrightarrow\Cats{\V_2}
\]
which commutes with the forgetful functors to $\SET$.

\begin{examples} \label{exs:quantale_morphs}
  \begin{enumerate}
  \item The identity map on $[0,\infty]$ defines a lax quantale morphism
    \[
      \Pm \longrightarrow\Pp,
    \]
    and the map $[0,\infty]\to [0,1],\,u\mapsto\min(u,1)$ gives a lax quantale
    morphism
    \[
      \Pp \longrightarrow \overleftarrow{[0,1]}_\oplus.
    \]
    The corresponding functors produce the canonical chain of functors
    \[
      \UMET \longrightarrow\MET \longrightarrow\BMET.
    \]
  \item \label{functors_in_delta}The quantale $\Pp$ embeds canonically into
    $\ddf$ via $I_\infty:\Pp \to \ddf$, taking an element $n\in \zerinf$ to
    $f_{n,1} \in \ddf$. This map is a lax quantale morphism and it
    admits a right and a left adjoint
    \[
      \xymatrix{\Pp\ar[rr]|-{I_\infty} & &
        \ddf\ar@/^3.5ex/[ll]_\perp^-{P_\infty}\ar@/_3.5ex/[ll]^\perp_-{O_\infty}}
    \]
    with $P_\infty(f)=\inf\{n\in \zerinf \mid f(n)=1\} $ and
    $O_\infty(f)=\sup\{n\in \zerinf\mid f(n)=0\}$, for all $f\in \ddf$ with
    $P_\infty$ and $O_\infty$ being lax quantale morphisms. These lax morphisms
    induce adjoint functors
    \[
      \xymatrix{\MET\ar[rr]|-{I_\infty} &&
        \PROBMET.\ar@/^3.5ex/[ll]_\perp^-{P_\infty}\ar@/_3.5ex/[ll]^\perp_-{O_\infty}}
    \]
    between the categories $\MET$ and $\PROBMET$.
  \end{enumerate}
\end{examples}

For every quantale $\V$, the canonical map
\[
  i:\two \longrightarrow \V,\,0 \longmapsto \bot,\, 1 \longmapsto k
\]
is a lax quantale morphism, which induces the functor
\[
  B_i:\ORD \longrightarrow\Cats{\V}.
\]
The monotone map $i:\two\to\V$ has a right adjoint
\[
  p:\V \longrightarrow \two,\, v \longmapsto
  \begin{cases}
    1 & \text{if }v\ge k,\\
    0 & \text{else}
  \end{cases}
\]
which is a lax morphism of quantales too and induces the functor
$B_p:\Cats{\V}\to\ORD$; explicitly,
\[
  x\leq y \iff k \leq a(x,y),
\]
for all elements $x,y$ of a $\V$-category $X$.

\begin{definition}\label{d:def:3}
  A $\V$-category $X=(X,a)$ is called \df{separated} whenever $B_pX$ is
  separated; that is, for all $x,y\in X$, $k\le a(x,y)$ and $k\le a(y,x)$ imply
  $x=y$.
\end{definition}

\subsection{$\V$-distributors}

Besides $\V$-functors, there is another important type of morphisms between
categories, called $\V$-distributors. The notion of distributor was introduced
by B{\'e}nabou in the 1960's and provides ``a generalisation of relations
between sets to `relations between (small) categories' '' (see \cite{Ben00}).

\begin{definition}
  For $\V$-categories $X=(X,a)$ and $Y=(Y,b)$, a \df{$\V$-distributor}
  $\varphi :X\modto Y$ is a $\V$-relation $\varphi: X\relto Y$ compatible with
  both structures, meaning that $ \varphi \cdot a \leq \varphi$ and that
  $b\cdot \varphi \leq \varphi$.
\end{definition}

In fact, these inequalities are equalities due to the reflexivity of $a$ and
$b$. Thus the identity distributor on $(X,a)$ is actually $a$ and, considering
the composition of $\V$-relations, we obtain the category $\Dists{\V}$. We also
note that a $\V$-relation $\varphi: X \relto Y$ is a $\V$-distributor precisely
when $\varphi: X^\op\otimes Y \to \V$ is a $\V$-functor (see \cite{Law73}).

For $\V$-categories $X$ and $Y$, the subset
\[
  \Dists{\V}(X,Y)\hookrightarrow\Rels{\V}(X,Y)
\]
is closed under suprema; hence, the supremum of a family
$(\varphi_i:X\modto Y)_{i\in I}$ can be calculated pointwise. As in
Subsection~\ref{ssec:V-rel}, for a $\V$-distributor $\varphi: X\modto Y$, both
maps $(-)\cdot \varphi$ and $ \varphi \cdot (-)$ have right adjoint given,
respectively, by the extension and lifting along $\varphi$.

Every $\V$-functor $f:(X,a)\to (Y,b)$ induces a pair of $\V$-distributors
$f_*:(X,a)\modto (Y,b)$ and $f^*:(Y,b)\modto (X,a)$ given by $f_*=b\cdot f$ and
$f^*=f^\circ\cdot b$; in pointwise notation, for $x\in X$ and $y\in Y$,
\begin{align*}
  f_*(x,y)=b(f(x),y) && \text{and} && f^*(y,x)= b(y,f(x)),
\end{align*}
which characterise the functors $(-)_*:\Cats{\V}\to\Dists{\V}$ and
$(-)^*\Cats{\V}\to \Dists{\V}^{\op}$. An important fact about these induced
$\V$-distributors is that they form an adjunction $f_*\dashv f^*$ in
$\Dists{\V}$ since
\[
  f^*\cdot f_*=f^\circ\cdot b\cdot b\cdot f= f^\circ\cdot b\cdot f \ge
  f^\circ\cdot f\cdot a\geq a
\]
and
\[
  f_*\cdot f^*=b\cdot f \cdot f^\circ \cdot b\leq b\cdot b=b.
\]
For the particular case of a $\V$-functor of the form $x:G\to X$ we obtain
$x_*=a(x,-)$ and $x^*=(-,x)$.

\begin{definition}
  A $\V$-functor $f:(X,a)\to (Y,b)$ is called \df{fully faithful} whenever
  $f^*\cdot f_*=a$, and $f$ is called \df{fully dense} whenever
  $f_*\cdot f^*=b$.
\end{definition}

The underlying order of $\V$-categories extends point-wise to an order relation
between $\V$-functors. This order relation can be equivalently described using
$\V$-distributors: for $\V$-functors $f,g:(X,a)\to(Y,b)$,
\[
  f\leq g\iff f^*\leq g^*\iff g_*\leq f_*.
\]
Furthermore, the composition from either sides preserves this order, and
therefore $\Cats{\V}$ is actually an ordered category. An important consequence
is the possibility to define adjoint $\V$-functors: a pair of $\V$-functors
$f:(X,a) \to (Y,b)$ and $g: (Y,b)\to (X,a)$ forms an adjunction, $f\dashv g$,
whenever, $1_X\leq g\cdot f$ and $\cdot fg\leq 1_Y$. Since
\[
  f\dashv g\iff g_*\dashv f_*\iff f_*=g^*,
\]
$f\dashv g$ if and only if, for all $x\in X$ and $y\in Y$,
$a(x,g(y))=b(f(x),y)$.

\subsection{Cauchy complete $\V$-categories}

In 1973, Lawvere \cite{Law73} proved that a metric space $X$ is Cauchy complete
if and only if every adjunction $\varphi \dashv \psi : Y\modto X$ of
$\Pp$-distributors is of the form $f_*\dashv f^*$, for some non-expansive map
$f:X\to Y$. This observation motivates the following nomenclature.

\begin{definition}
  Let $\V=\quantale$ be a quantale. A $\V$-category $(X,a)$ is \df{Cauchy
    complete} if every adjunction of $\V$-distributors
  $(\varphi:X\modto Y)\dashv(\psi:Y\modto X)$ is representable, meaning that
  there is a $\V$-functor $f:Y\to X$ such that $\varphi=f_*$ and $\psi=f^*$.
\end{definition}

Although the definition requires the representability of every adjunction, it is
enough to consider the case $Y=G$. Thus, a $\V$-category $X$ is Cauchy complete
if and only if every adjunction $(\varphi:G\modto X)\dashv(\psi:X\modto G)$ is
representable by some $x\in X$.

Subsequent developments established conditions under which results relating
Cauchy sequences, convergence of sequences, adjunctions of distributors and
representability can be generalised to $\Cats{\V}$ (see
\cite{Fla92,HT10,HR13,CH09,Cha09}). In this subsection we will present some of
these notions and results in order to recall that, under some light conditions
on the quantale $\V$, Lawvere's notion of complete $\V$-categories can be
equivalently expressed with Cauchy sequences.

In order to talk about convergence, we need to introduce first some topological
notions. Here, for a quantale $\V=\quantale$, a $\V$-category $(X,a)$ and a
subset $M\subseteq X$, the \df{L-closure} $\overline{M}$ of $M$ is given by the
collection of all $x\in X$ which \emph{represent adjoint distributors on
  $M$}. More precisely, $x\in\overline{M}$ whenever
$i^*\cdot x_*\dashv x^*\cdot i_*$, where $i:M\hookrightarrow X$ is the inclusion
$\thU$-functor. In more elementary terms, we have:

\begin{proposition}\label{prop:4}
  Let $(X,a)$ be a $\V$-category, $M\subseteq X$ and $x\in X$. Then the
  following assertions are equivalent.
  \begin{tfae}
  \item $x\in\overline{M}$.
  \item $k\le\bigvee_{z\in M}a(x,z)\otimes a(z,x)$.
  \end{tfae}
\end{proposition}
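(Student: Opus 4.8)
The plan is to unwind the definition of the L-closure in terms of adjoint $\V$-distributors and compute the relevant composites explicitly. Recall that $x \in \overline{M}$ means precisely that $i^* \cdot x_* \dashv x^* \cdot i_*$ in $\Dists{\V}$, where $i : M \hookrightarrow X$ is the inclusion $\V$-functor. Writing $\varphi := i^* \cdot x_* : G \modto M$ and $\psi := x^* \cdot i_* : M \modto G$, I would first record the pointwise formulas for these distributors. Since $x_* = a(x,-)$ and $x^* = a(-,x)$ as computed in the excerpt, and since $i$ is an inclusion, for $z \in M$ we get $\varphi(z) = a(x,z)$ and $\psi(z) = a(z,x)$, both viewed as restrictions to $M$.

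The adjunction $\varphi \dashv \psi$ amounts to the two inequalities $1_G \le \psi \cdot \varphi$ and $\varphi \cdot \psi \le a|_M$, where $a|_M$ is the identity distributor on the $\V$-subcategory $M$. First I would observe that the second inequality $\varphi \cdot \psi \le a|_M$ holds automatically: for $z, z' \in M$ it reads $a(x,z) \otimes a(z',x) \le a(z',z)$, which is an instance of transitivity $a(z',x)\otimes a(x,z)\le a(z',z)$ after using commutativity of $\otimes$. So the entire content of the adjunction collapses to the first inequality $1_G \le \psi \cdot \varphi$. Since $G = (1, k)$ and its identity distributor is $k$, this inequality reads
\[
  k \le \bigvee_{z\in M} \varphi(z)\otimes\psi(z) = \bigvee_{z\in M} a(x,z)\otimes a(z,x),
\]
which is exactly condition (ii). This establishes the equivalence in both directions simultaneously.

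I would carry out the steps in this order: (1) substitute $x_* = a(x,-)$, $x^* = a(-,x)$ and unfold $i^*, i_*$ to get the pointwise descriptions of $\varphi$ and $\psi$ on $M$; (2) check that $\varphi$ and $\psi$ are genuinely $\V$-distributors (compatibility with the structures $a|_M$ on $M$ and $k$ on $G$), which follows from the distributor axioms for $x_*, x^*$ and the fact that restriction along a fully faithful inclusion preserves distributors; (3) verify the counit inequality $\varphi \cdot \psi \le a|_M$ holds unconditionally via transitivity; and (4) identify the unit inequality $k \le \psi \cdot \varphi$ with condition (ii). The main obstacle I anticipate is bookkeeping at step (2): one must be careful that the inclusion $i$ is fully faithful so that $i^* \cdot i_* = a|_M$, ensuring that $\varphi$ and $\psi$ really define distributors on $M$ rather than on $X$, and that the composite $\psi \cdot \varphi : G \modto G$ lands in $\V$-relations over $M$ and not $X$. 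Once the reduction to the single unit inequality is in place, the equivalence with (ii) is immediate, so the argument is essentially a careful translation between the distributor-theoretic definition and the pointwise supremum formula.
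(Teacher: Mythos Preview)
Your proposal is correct and is exactly the intended unfolding; the paper states this proposition without proof, treating it as an immediate elementary reformulation of the L-closure definition. One minor simplification: your step~(2) is unnecessary, since in $\Dists{\V}$ (unlike $\Dists{\thU}$) composition is associative and well-defined, so $i^*\cdot x_*$ and $x^*\cdot i_*$ are automatically $\V$-distributors; the only substantive content is, as you identified, that the counit inequality holds by transitivity, reducing the adjunction to the unit inequality~(ii).
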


The proposition above also shows that $(X,a)$ and $(X,a)^\op$ induce the same
closure operator on the set $X$.

\begin{proposition}
  Let $\V=\quantale$ be a quantale. For a $\V$-functor $f:X\to Y$ and
  $M,M'\subseteq X$, $N\subseteq Y$, one has:
  \begin{enumerate}
  \item $M\subseteq\overline{M}$ and $M\subseteq M'$ implies
    $\overline{M}\subseteq\overline{M'}$.
  \item $\overline{\overline{M}}=\overline{M}$.
  \item $f(\overline{M})\subseteq \overline{f(M)}$ and
    $f^{-1}(\overline{N})\supseteq\overline{f^{-1}(N)}$.
  \item If $k$ is $\vee$-irreducible, then
    $\overline{M\cup M'}=\overline{M}\cup\overline{M'}$ and
    $\overline{\varnothing}=\varnothing$.
  \end{enumerate}
\end{proposition}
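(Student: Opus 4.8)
The plan is to reduce every assertion to the elementary description of the L-closure furnished by Proposition~\ref{prop:4}: $x\in\overline M$ holds exactly when $k\le\bigvee_{z\in M}a(x,z)\otimes a(z,x)$. With this characterisation in hand, each item becomes a computation with joins in $\V$, exploiting the reflexivity and transitivity of the $\V$-category structure $a$ together with the distributivity of $\otimes$ over joins.

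For (1), I would observe that for $x\in M$ the summand at $z=x$ already gives $a(x,x)\otimes a(x,x)\ge k\otimes k=k$ by reflexivity, so $x\in\overline M$; and if $M\subseteq M'$, then the witnessing join for $\overline M$ is dominated by that for $\overline{M'}$, whence $\overline M\subseteq\overline{M'}$.

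The heart of the proof---and the step I expect to be the main obstacle---is the nontrivial inclusion $\overline{\overline M}\subseteq\overline M$ in (2). The idea is, for $x\in\overline{\overline M}$ and each $y\in\overline M$, to insert the witnessing inequality $k\le\bigvee_{z\in M}a(y,z)\otimes a(z,y)$ into the product $a(x,y)\otimes a(y,x)=a(x,y)\otimes k\otimes a(y,x)$, distribute $\otimes$ over the join, and then collapse each resulting four-fold product by transitivity, using $a(x,y)\otimes a(y,z)\le a(x,z)$ and $a(z,y)\otimes a(y,x)\le a(z,x)$. This yields $a(x,y)\otimes a(y,x)\le\bigvee_{z\in M}a(x,z)\otimes a(z,x)$; joining over $y\in\overline M$ and invoking $x\in\overline{\overline M}$ then gives $k\le\bigvee_{z\in M}a(x,z)\otimes a(z,x)$, that is, $x\in\overline M$. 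The reverse inclusion is (1). Arranging the four factors so that transitivity applies on both sides is the only genuinely delicate point.

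For (3), the inclusion $f(\overline M)\subseteq\overline{f(M)}$ uses only that $f$ is a $\V$-functor: since $a(x,z)\le b(f(x),f(z))$ and $a(z,x)\le b(f(z),f(x))$, the witnessing join for $x\in\overline M$ is bounded above by $\bigvee_{w\in f(M)}b(f(x),w)\otimes b(w,f(x))$, whence $f(x)\in\overline{f(M)}$. I would then deduce $\overline{f^{-1}(N)}\subseteq f^{-1}(\overline N)$ formally, by applying this to $M=f^{-1}(N)$ and combining $f(f^{-1}(N))\subseteq N$ with the monotonicity from (1). Finally, for (4) the hypothesis that $k$ is $\vee$-irreducible does all the work: the empty join equals $\bot<k$, forcing $\overline\varnothing=\varnothing$; and writing the witnessing join for $x\in\overline{M\cup M'}$ as $u\vee v$, with $u$ and $v$ the joins over $M$ and $M'$, $\vee$-irreducibility turns $k\le u\vee v$ into $k\le u$ or $k\le v$, so $x\in\overline M\cup\overline{M'}$, the reverse inclusion being monotonicity once more.
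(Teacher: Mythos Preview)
Your proof is correct in every part. The paper does not supply a proof of this proposition; it states the result without proof, the closure operator and its basic properties being imported from \cite{HT10}. Your elementary verification via Proposition~\ref{prop:4} is exactly the kind of argument one would expect: item~(1) is immediate from reflexivity and monotonicity of joins, item~(2) is the only place requiring genuine care and your four-factor transitivity collapse handles it cleanly, item~(3) is straightforward from $\V$-functoriality, and item~(4) follows directly from the definition of $\vee$-irreducibility. There is nothing to correct.
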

\begin{corollary}\label{d:cor:4}
  If $k$ is $\vee$-irreducible in $\V$, then the L-closure operator defines a
  topology on $X$ such that every $\V$-functor becomes continuous. Hence, in
  this case the L-closure defines a functor $\Cats{\V}\to\TOP$.
\end{corollary}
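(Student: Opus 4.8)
The corollary states: if $k$ is $\vee$-irreducible in $\V$, then the L-closure operator defines a topology on $X$ such that every $\V$-functor becomes continuous.

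The previous proposition gives us the key properties of L-closure. Let me recall what's needed for a closure operator to define a topology (Kuratowski axioms):
1. $\overline{\varnothing} = \varnothing$
2. $M \subseteq \overline{M}$ (extensivity)
3. $\overline{\overline{M}} = \overline{M}$ (idempotence)
4. $\overline{M \cup M'} = \overline{M} \cup \overline{M'}$ (preserves finite unions)

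Looking at the proposition stated just before the corollary:
- Property (1) of the proposition gives extensivity and monotonicity
- Property (2) gives idempotence
- Property (4) gives (when $k$ is $\vee$-irreducible) both $\overline{M \cup M'} = \overline{M} \cup \overline{M'}$ and $\overline{\varnothing} = \varnothing$

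So all four Kuratowski axioms follow directly from the proposition, **provided** $k$ is $\vee$-irreducible (which is exactly the hypothesis).

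For continuity of $\V$-functors: property (3) of the proposition states $f(\overline{M}) \subseteq \overline{f(M)}$. This is precisely the characterization that $f$ is continuous (a map between topological spaces is continuous iff $f(\overline{M}) \subseteq \overline{f(M)}$ for all $M$).

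**My Proof Plan**

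The plan is to verify that the L-closure operator satisfies the four Kuratowski closure axioms, which then guarantees it induces a topology (where closed sets are the fixed points of the closure operator), and then to observe that the continuity of $\V$-functors is immediate from one of the already-established properties.

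First, I would invoke the preceding proposition to collect the Kuratowski axioms: extensivity $M \subseteq \overline{M}$ and monotonicity come from part (1); idempotence $\overline{\overline{M}} = \overline{M}$ is exactly part (2); and since $k$ is assumed $\vee$-irreducible, part (4) furnishes both the preservation of binary unions $\overline{M \cup M'} = \overline{M} \cup \overline{M'}$ and the normalization $\overline{\varnothing} = \varnothing$. These four conditions are precisely the Kuratowski closure axioms, so the L-closure operator defines a topology on $X$ (taking as closed sets those $M$ with $\overline{M} = M$).

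Then, for continuity, I would note that by part (3) of the same proposition, every $\V$-functor $f : X \to Y$ satisfies $f(\overline{M}) \subseteq \overline{f(M)}$ for all $M \subseteq X$. This is a standard equivalent formulation of continuity with respect to the topologies induced by the respective closure operators, so $f$ is continuous. Putting these together, the L-closure assignment $X \mapsto (X, \text{topology})$ on objects, together with sending $\V$-functors to the same underlying continuous maps, is functorial (it preserves identities and composites trivially since it acts as the identity on underlying maps), giving the desired functor $\Cats{\V} \to \TOP$.

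**The Main Obstacle**

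Honestly, this corollary is almost entirely a bookkeeping consequence of the proposition that precedes it — the real mathematical work was already done in proving that proposition (especially part (4), which is where $\vee$-irreducibility enters). The only subtle point worth flagging is the equivalence between the two standard ways of defining a topology from a closure operator versus from open/closed sets, and the equivalence "$f(\overline{M}) \subseteq \overline{f(M)}$ iff $f$ continuous" — but these are textbook facts about Kuratowski closure spaces, so I would simply cite them rather than reprove them. The hypothesis $\vee$-irreducibility is essential and non-removable: without it, part (4) fails, and in particular $\overline{\varnothing}$ need not be empty (a single point's closure could be nonempty even starting from $\varnothing$), so the collection of closed sets would not be closed under finite unions and no topology would arise.
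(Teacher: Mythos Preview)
Your proposal is correct and matches the paper's intended reasoning: the corollary is stated without proof precisely because it follows immediately from the preceding proposition, and your argument—checking the Kuratowski axioms via parts (1), (2), (4) and continuity via part (3)—is exactly the verification the paper leaves implicit.
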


Equipped with a closure operator, there is a notion of convergence in a
$\V$-category. In particular, a sequence $s=(x_n)_{n\in\N}$ in a $\V$-category
$(X,a)$ converges to $x\in X$ if and only if
$k\leq \bigvee_{m\in M}a(x,x_m)\otimes a(x_m,x)$ for all infinite subset $M$ of
$\N$.

We recall the following definition from \cite{Wag94}.

\begin{definition}
  Let $\V=\quantale$ be a quantale. A sequence $s=(x_n)_{n\in\N}$ in a
  $\V$-category $(X,a)$ is \df{Cauchy} if $k\leq \Cauchy (s)$, were
  $\Cauchy (s)= \bigvee_{N\in\N}\bigwedge_{n,m\geq N}a(x_n,x_m)$.
\end{definition}

Every sequence $s=(x_n)_{n\in \N}$ in a $\V$-category $(X,a)$ induces
$\V$-distributors $\varphi_s:G\modto X$ and $\psi_s:X\modto G$ defined as
\begin{align*}
  \varphi_s(x)&=\bigvee_{N\in\N}\bigwedge_{n\geq N}a(x_n,x)
  &\text{and}&&
                \psi_s(x)&=\bigvee_{N\in\N}\bigwedge_{n\geq N}a(x,x_n),
\end{align*}
for all $x\in X$. The relation between these concepts is stated in the following
result (see \cite{HR13}).

\begin{theorem}
  A sequence $s$ in a $\V$-category $X$ is Cauchy if and only if
  $\varphi_s\dashv\psi_s$ in $\Dists{\V}$.
\end{theorem}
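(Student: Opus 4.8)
The plan is to unwind the adjunction $\varphi_s\dashv\psi_s$ in the ordered category $\Dists{\V}$ into its unit and counit inequalities and to compute both composites explicitly. Since $G=(1,k)$, its identity distributor is $k$ and that of $(X,a)$ is $a$; thus $\varphi_s\dashv\psi_s$ means precisely $k\le\psi_s\cdot\varphi_s(\star,\star)$ and $\varphi_s\cdot\psi_s\le a$. Reading off the (relational) composites gives
\[
  \psi_s\cdot\varphi_s(\star,\star)=\bigvee_{x\in X}\varphi_s(x)\otimes\psi_s(x)
  \qquad\text{and}\qquad
  \varphi_s\cdot\psi_s(x,x')=\psi_s(x)\otimes\varphi_s(x'),
\]
so the two conditions become
\[
  k\le\bigvee_{x\in X}\varphi_s(x)\otimes\psi_s(x)
  \qquad\text{and}\qquad
  \psi_s(x)\otimes\varphi_s(x')\le a(x,x')\ \text{for all }x,x'\in X.
\]

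First I would dispose of the counit. Expanding $\psi_s(x)\otimes\varphi_s(x')$ with $\otimes$ distributing over the defining joins, a generic summand is $\bigl(\bigwedge_{n\ge N}a(x,x_n)\bigr)\otimes\bigl(\bigwedge_{m\ge N'}a(x_m,x')\bigr)$; bounding each factor by its term at the index $p=\max(N,N')$ and applying transitivity $a(x,x_p)\otimes a(x_p,x')\le a(x,x')$ shows every summand lies below $a(x,x')$. Hence the counit inequality holds for \emph{every} sequence, and the adjunction is equivalent to the single unit condition $k\le u$, where I write $u:=\bigvee_{x}\varphi_s(x)\otimes\psi_s(x)$ and $c:=\Cauchy(s)=\bigvee_N c_N$ with $c_N:=\bigwedge_{n,m\ge N}a(x_n,x_m)$.

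The core of the argument is the sandwich $c\otimes c\le u\le c$, from which the theorem follows at once: if $k\le c$ then $k=k\otimes k\le c\otimes c\le u$, while if $k\le u$ then $k\le u\le c$. For the upper bound, the same index‑juggling as above — now grouping $n,m\ge N'':=\max(N,N')$ and applying transitivity through $x$ — shows each summand of $\varphi_s(x)\otimes\psi_s(x)$ lies below $c_{N''}\le c$, whence $\varphi_s(x)\otimes\psi_s(x)\le c$ for all $x$ and so $u\le c$. For the lower bound, monotonicity of $(c_N)_N$ together with distributivity of $\otimes$ over joins gives $c\otimes c=\bigvee_{N}c_N\otimes c_N$, and for each $N$ the estimates $c_N\le\bigwedge_{n\ge N}a(x_n,x_N)\le\varphi_s(x_N)$ and $c_N\le\bigwedge_{m\ge N}a(x_N,x_m)\le\psi_s(x_N)$ yield $c_N\otimes c_N\le\varphi_s(x_N)\otimes\psi_s(x_N)\le u$.

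I expect the only real obstacle to be bookkeeping rather than mathematics: one must keep the two independent tail indices $N,N'$ apart when expanding each tensor and merge them via $\max(N,N')$ only at the moment transitivity is invoked. The conceptual point worth flagging is that $u$ and $c$ need not be equal — merely sandwiched — so the equivalence cannot be read off from a single computed value; it is the idempotency $k=k\otimes k$ of the unit that bridges the gap in the Cauchy $\Rightarrow$ adjunction direction. Notably, no extra hypothesis on $\V$ (approximation or $\vee$‑irreducibility of $k$) is required for this equivalence.
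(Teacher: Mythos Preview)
Your proof is correct and complete. The paper itself does not give a proof of this theorem but merely refers to \cite{HR13}; your argument supplies a self-contained demonstration.

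Your approach --- verifying that the counit $\varphi_s\cdot\psi_s\le a$ holds for every sequence and then establishing the sandwich $c\otimes c\le u\le c$ between $c=\Cauchy(s)$ and the unit value $u=\bigvee_x\varphi_s(x)\otimes\psi_s(x)$ --- is the standard one and matches the argument in the cited reference. The index-merging via $\max(N,N')$ and the use of transitivity of $a$ are carried out correctly in both directions of the sandwich, and your observation that the equivalence hinges on $k=k\otimes k$ (rather than on $u=c$) is exactly the right conceptual point. No additional hypotheses on $\V$ are needed, as you note.
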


\begin{theorem}
  \label{theor:cond_adj_is_induced_by_Cauchyseq}
  Let $\V$ be a quantale where $k$ is terminal and assume that there is a sequence
  $(u_n)_{n\in \N}$ in $\V$ satisfying:
  \begin{enumerate}
  \item $\bigvee_{n\in\N}u_n=k$,
  \item $\forall n\in\N, u_n\ll k$,
  \item $\forall n\in\N, u_n\leq u_{n+1}$.
  \end{enumerate}
  Then every adjunction of $\V$-distributors of the form
  $\varphi \dashv \psi :X\modto G$ is induced by a Cauchy sequence $s$, that is,
  $\varphi_s=\varphi$ and $\psi_s=\psi$.
\end{theorem}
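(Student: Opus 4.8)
The plan is to unwind the adjunction $\varphi\dashv\psi$ into elementary pointwise inequalities, use the approximating sequence $(u_n)_{n\in\N}$ together with the totally-below relation to \emph{select} the points of the sequence, and then recover $\varphi$ and $\psi$ from the chosen points by a join-swapping argument that exploits $\bigvee_{n}u_n=k$. First I would record what the data mean. Writing $\varphi(x)$ and $\psi(x)$ for the values of the distributors $\varphi:G\modto X$ and $\psi:X\modto G$, the unit and counit of $\varphi\dashv\psi$ read
\begin{align*}
  k\le\bigvee_{x\in X}\varphi(x)\otimes\psi(x)
  &&\text{and}&&
  \psi(x)\otimes\varphi(x')\le a(x,x')\quad(x,x'\in X),
\end{align*}
while the fact that $\varphi$ and $\psi$ are distributors gives $\varphi(y)\otimes a(y,x)\le\varphi(x)$ and $a(x,y)\otimes\psi(y)\le\psi(x)$. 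Since $k$ is terminal we have $k=\top$, hence $u\otimes v\le u\wedge v$ for all $u,v\in\V$; this lets me pass freely from a product to either of its factors.

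Next I would construct the sequence. Fixing $n\in\N$, since $u_n\ll k$ and $k\le\bigvee_{x}\varphi(x)\otimes\psi(x)$, applying the definition of the totally-below relation to the down-set generated by $\{\varphi(x)\otimes\psi(x)\mid x\in X\}$ produces an $x_n\in X$ with $u_n\le\varphi(x_n)\otimes\psi(x_n)$, whence $u_n\le\varphi(x_n)$ and $u_n\le\psi(x_n)$ by integrality. Set $s=(x_n)_{n\in\N}$. The remaining computations all rest on one elementary observation: for any increasing family $(b_N)_{N\in\N}$ in $\V$ one has $\bigvee_N b_N\otimes u_N=(\bigvee_N b_N)\otimes(\bigvee_N u_N)=\bigvee_N b_N$, since $\otimes$ preserves joins in each variable, the diagonal join over the increasing families $(b_N)$ and $(u_N)$ agrees with the double join, and $\bigvee_N u_N=k$ is neutral.

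To identify $\psi_s$ with $\psi$ I would argue in two directions, writing $b_N=\bigwedge_{n\ge N}a(x,x_n)$. For ``$\psi_s\le\psi$'' I combine the distributor inequality $a(x,x_n)\otimes\psi(x_n)\le\psi(x)$ with $u_n\le\psi(x_n)$ and the monotonicity of $(u_N)$ to get $b_N\otimes u_N\le\psi(x)$, then sum over $N$ using the observation above so that $\psi_s(x)=\bigvee_N b_N=\bigvee_N b_N\otimes u_N\le\psi(x)$. For ``$\psi\le\psi_s$'' I use the counit $\psi(x)\otimes\varphi(x_n)\le a(x,x_n)$ together with $u_n\le\varphi(x_n)$ to obtain $\psi(x)\otimes u_N\le b_N$, and sum over $N$, now absorbing the factor $u_N$ into $k$. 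The identification $\varphi_s=\varphi$ is entirely symmetric, using the other distributor inequality and the counit in the form $\psi(x_n)\otimes\varphi(x)\le a(x_n,x)$. Finally $s$ is Cauchy: either one checks directly that $\Cauchy(s)\ge\bigvee_N u_N\otimes u_N=k$ from the counit, or one simply invokes that $\varphi_s\dashv\psi_s$ (being equal to $\varphi\dashv\psi$) forces $s$ to be Cauchy by the earlier characterisation.

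I expect the only genuinely delicate points to be the selection step --- making sure the relation $u_n\ll k$ is applied correctly against the down-set generated by $\{\varphi(x)\otimes\psi(x)\}$, rather than to the family itself --- and the repeated join-swapping identity, where commutativity of $\otimes$, distributivity over joins, monotonicity of $(u_N)$, and $\bigvee_N u_N=k$ all have to conspire to absorb the approximating factors. Everything else reduces to routine manipulation of the distributor and adjunction inequalities.
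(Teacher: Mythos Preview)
Your argument is correct. The paper itself does not give a proof of this theorem---it is recalled as background from \cite{HR13}---and your construction is precisely the standard one: select $x_n$ via $u_n\ll k\le\bigvee_x\varphi(x)\otimes\psi(x)$, then use integrality, the distributor and counit inequalities, and the absorption identity $\bigvee_N b_N\otimes u_N=\bigvee_N b_N$ for increasing $(b_N)$ to recover $\varphi$ and $\psi$ from $s$.
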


\begin{theorem}
  Let $\V$ be a quantale where $k$ is terminal and $s$ a Cauchy sequence in a
  $\V$-category $X$. Then $s$ converges to $x$ if and only if $\varphi_s=x_*$
  and $\psi_s=x^*$.
\end{theorem}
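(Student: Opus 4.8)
The goal unfolds pointwise: since $x_*=a(x,-)$ and $x^*=a(-,x)$, proving $\varphi_s=x_*$ and $\psi_s=x^*$ amounts to showing $\varphi_s(y)=a(x,y)$ and $\psi_s(y)=a(y,x)$ for every $y\in X$, where $\varphi_s(y)=\bigvee_{N}\bigwedge_{n\ge N}a(x_n,y)$ and $\psi_s(y)=\bigvee_{N}\bigwedge_{n\ge N}a(y,x_n)$. The plan is to prove both implications by exploiting three ingredients: the transitivity inequality $a(u,v)\otimes a(v,w)\le a(u,w)$; the Cauchy hypothesis, which gives $\Cauchy(s)=\bigvee_N d_N\ge k$ for $d_N:=\bigwedge_{n,m\ge N}a(x_n,x_m)$; and the assumption that $k$ is terminal, i.e.\ $k=\top$, so that $v\le k$ for every $v\in\V$.

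For the forward direction I would first extract one-sided ``tail'' estimates from convergence. Applying the convergence condition to the infinite set $M=\{m\mid m\ge N\}$ gives $k\le\bigvee_{m\ge N}a(x,x_m)\otimes a(x_m,x)$; since $a(x_m,x)\le k$ and $a(x,x_m)\le k$, this at once yields $\bigvee_{m\ge N}a(x,x_m)=k$ and $\bigvee_{m\ge N}a(x_m,x)=k$. Next I would show $\varphi_s(x)\ge k$ (and dually $\psi_s(x)\ge k$): for $n,m\ge N$ transitivity gives $a(x_n,x)\ge d_N\otimes a(x_m,x)$, and joining over $m\ge N$ with the tail estimate yields $\bigwedge_{n\ge N}a(x_n,x)\ge d_N$, whence $\varphi_s(x)\ge\bigvee_N d_N=k$. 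With these in hand the two inequalities $\varphi_s\le x_*$ and $\varphi_s\ge x_*$ follow by squeezing: writing $b_N=\bigwedge_{n\ge N}a(x_n,y)$, transitivity gives $a(x,x_n)\otimes b_N\le a(x,y)$ for each $n\ge N$, and joining over $n\ge N$ (using $\bigvee_{n\ge N}a(x,x_n)=k$) gives $b_N\le a(x,y)$, hence $\varphi_s\le x_*$; conversely transitivity and join-preservation of $\otimes$ give $\varphi_s(x)\otimes a(x,y)\le\varphi_s(y)$, which together with $\varphi_s(x)\ge k$ yields $a(x,y)\le\varphi_s(y)$. The identity $\psi_s=x^*$ is obtained by the evident dual computation.

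For the converse I would use the representability hypotheses to rewrite the convergence condition. Since $\varphi_s=x_*$ and $\psi_s=x^*$ give $\varphi_s(x_m)=a(x,x_m)$ and $\psi_s(x_m)=a(x_m,x)$, and since $a(x_n,x_m)\ge d_N$ for $n,m\ge N$, unfolding $\varphi_s(x_m)=\bigvee_{N'}\bigwedge_{n\ge N'}a(x_n,x_m)\ge d_N$ and likewise $\psi_s(x_m)\ge d_N$ shows $a(x,x_m)\otimes a(x_m,x)\ge d_N\otimes d_N$ for all $m\ge N$. Given an arbitrary infinite $M$, for each $N$ there is some $m\in M$ with $m\ge N$, so $\bigvee_{m\in M}a(x,x_m)\otimes a(x_m,x)\ge d_N\otimes d_N$; taking the join over $N$ and using that $\otimes$ preserves joins (so $\bigvee_N d_N\otimes d_N=(\bigvee_N d_N)\otimes(\bigvee_N d_N)\ge k\otimes k=k$) produces the convergence inequality.

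The step I expect to be the main obstacle is passing from the ``mixed'' convergence inequality $k\le\bigvee_{m\in M}a(x,x_m)\otimes a(x_m,x)$ to the separate one-sided estimates and then to $\varphi_s(x)\ge k$. Convergence only controls the product $a(x,x_m)\otimes a(x_m,x)$, and it is precisely the terminality of $k$ (allowing one factor to be absorbed into its upper bound $k$) together with the Cauchy diagonal $d_N$ that lets me decouple the two sides; keeping the quantifier order straight in the nested $\bigvee_N\bigwedge_{n\ge N}$ expressions, and tracking exactly where $k=\top$ is genuinely used, is the delicate part. I expect the converse to be comparatively routine once the identities $\varphi_s(x_m)=a(x,x_m)$ and $\psi_s(x_m)=a(x_m,x)$ are in place.
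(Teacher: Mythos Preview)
The paper states this theorem without proof, citing \cite{HR13} for the background results in this subsection; so there is no in-paper argument to compare against. Your proposal is correct and complete. The forward direction is handled cleanly: the key move is using $k=\top$ to strip one factor from the convergence product and obtain the one-sided tail identities $\bigvee_{m\ge N}a(x,x_m)=k=\bigvee_{m\ge N}a(x_m,x)$, and then combining these with the Cauchy diagonal $d_N$ to get $\varphi_s(x)\ge k$, after which the squeeze $\varphi_s\le x_*\le\varphi_s$ is routine. The converse is also fine; the only point worth making explicit is that the step $\bigvee_N(d_N\otimes d_N)=(\bigvee_N d_N)\otimes(\bigvee_N d_N)$ uses not just join-preservation of $\otimes$ but also the fact that $(d_N)_N$ is increasing (so the diagonal supremum equals the double supremum), which you use implicitly but do not state.
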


\begin{theorem}
  Under the conditions of Theorem \ref{theor:cond_adj_is_induced_by_Cauchyseq},
  a $\V$-category is Cauchy complete if and only if every Cauchy sequence
  converges.
\end{theorem}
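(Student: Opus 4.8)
The plan is to chain together the four preceding results, since the theorem is essentially a repackaging of them once Cauchy completeness has been reduced to the case $Y=G$. Recall that $X$ is Cauchy complete precisely when every adjunction $(\varphi:G\modto X)\dashv(\psi:X\modto G)$ is representable by some $x\in X$, that is, $\varphi=x_*$ and $\psi=x^*$. Both implications therefore amount to passing back and forth between adjunctions of this shape and Cauchy sequences, using the already-established correspondences.

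For the forward direction, I would start from a Cauchy sequence $s=(x_n)_{n\in\N}$ in $X$. By the theorem characterising Cauchy sequences, the induced distributors satisfy $\varphi_s\dashv\psi_s$, an adjunction of exactly the required variance $\varphi_s:G\modto X$ and $\psi_s:X\modto G$. Cauchy completeness then yields some $x\in X$ with $\varphi_s=x_*$ and $\psi_s=x^*$; and since $k$ is terminal, the convergence theorem (a Cauchy sequence converges to $x$ if and only if $\varphi_s=x_*$ and $\psi_s=x^*$) tells us precisely that $s$ converges to $x$. Hence every Cauchy sequence converges.

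For the converse I would take an arbitrary adjunction $(\varphi:G\modto X)\dashv(\psi:X\modto G)$. This is where the standing hypotheses on $\V$ enter: Theorem~\ref{theor:cond_adj_is_induced_by_Cauchyseq} guarantees that such an adjunction is induced by a Cauchy sequence $s$, so that $\varphi=\varphi_s$ and $\psi=\psi_s$. By assumption $s$ converges, say to $x\in X$, and again the convergence theorem gives $\varphi_s=x_*$ and $\psi_s=x^*$. Therefore $\varphi=x_*$ and $\psi=x^*$, so the adjunction is representable by $x$ and $X$ is Cauchy complete.

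The only genuine content is bookkeeping: making sure $\varphi_s$ is the left adjoint $G\modto X$ and $\psi_s$ the right adjoint $X\modto G$, so that the shape matches the reduced definition of Cauchy completeness, and making sure the hypotheses ``$k$ terminal'' and the existence of the increasing sequence $(u_n)_{n\in\N}$ with $\bigvee_n u_n=k$ and $u_n\ll k$ are invoked only through the two cited theorems rather than reproved. No new estimate is required; the main obstacle is purely keeping the variances of the distributors and the direction of the adjunction consistent throughout.
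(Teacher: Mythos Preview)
Your proposal is correct and is exactly the natural argument: both directions follow by chaining the three preceding theorems (Cauchy sequences correspond to adjunctions, adjunctions are induced by Cauchy sequences under the stated hypotheses, and convergence of a Cauchy sequence is equivalent to representability of its associated adjunction). The paper itself does not supply a proof of this statement; it is presented without proof as a recalled result from the cited literature (see \cite{HR13}), so there is nothing further to compare against.
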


\section{Combining convergence and $\V$-categories}
\label{sec:comb-conv-v}

In this section we study $\V$-categories equipped with a compatible convergence
structure. As we explained in Section~\ref{sec:introduction}, this study has its
roots in Nachbin's ``Topology and Order'' \cite{Nac50} as presented in
\cite{Tho09}. We recall the notion of topological theory $\thU$ \cite{Hof07},
which provides enough structure to extent the ultrafilter monad $\mU$ to a monad
on $\Cats{\V}$; the algebras for this monad we designate as
\emph{$\V$-categorical compact Hausdorff spaces}. We also recall the notions of
$\thU$-category and $\thU$-functors and the comparison between $\V$-categorical
compact Hausdorff spaces and $\thU$-categories, which can be already found in
\cite{Tho09}. In the last subsection we use the closure operator on $\Cats{\V}$
introduced in \cite{HT10} to define \emph{compact} $\V$-categories, and show,
under some conditions on $\V$, that compact separated $\V$-categories provide
examples of $\V$-categorical compact Hausdorff spaces.

\subsection{The ultrafilter monad}
\label{ssec:uf-monad}

Given a category $\catA$, a \df{monad} on $\catA$ is a triple $\mT=\monad$
consisting of a functor $T:\catA \to \catA$ and natural transformations
$e:1\to T$ and $m:T^2 \to T$ such that the diagrams
\[
  \xymatrix{TTT \ar[r]^{m_{T}} \ar[d]_{T_m} & TT \ar[d]_m & T \ar[l]_{e_T} \ar[d]^{T_e} \ar[ld]_{1_T}\\
    TT \ar[r]_m & T & TT \ar[l]^m }
\]
commute. For a monad $\mT=\monad$, a \df{$\mT$-algebra} $(X,\alpha)$ is an
object $X$ of $\catA$ together with a map $\alpha: TX\to X$ satisfying
\[
  \xymatrix{TT X \ar[r]^{m_X} \ar[d]_{T\alpha} & TX \ar[d]_{\alpha} & X \ar[l]_{e_X} \ar[ld]^{1_X}\\
    TX \ar[r]_{\alpha} & X & }
\]
A morphism between $\mT$-algebras $f:(X,\alpha)\to (Y,\beta)$ is a map
$f:X\to Y$ such that the diagram
\[
  \xymatrix{TX \ar[r]^{Tf} \ar[d]_{\alpha} & TY \ar[d]^{\beta}\\
    X \ar[r]_{f} & Y }
\]
commutes.  $\mT$-algebras and their morphisms compose the Eilenberg-Moore
category $\catA^{\mT}$ of $\mT$. The forgetful functor
$G^\mT:\catA^{\mT}\to \catA$ has a right adjoint $F^\mT:\catA \to \catA^{\mT}$
that takes an object $X$ of $\catA$ to the $\mT$-algebra $(TX,m_X)$. For more
information on monads we refer to \cite{MS04}.

Every adjunction $(F\dashv G, \eta, \varepsilon):\catA \rightleftarrows\catX$
originates the monad $\monad$ on $\catA$ given by $T=G\cdot F$, $e=\eta$ and
$m=G{\varepsilon_F}$. Of particular interest to us is the ultrafilter monad
$\mU =\umonad$ which is induced by the adjunction
\[
  \BOOLE^\op \adjunct{\SET (-,\two)}{\BOOLE(-,\two)}\SET.
\]
Here the functor $U:\SET \to \SET$ takes a set $X$ to the set $UX$ of
ultrafilters on $X$ and, for a map $f:X\to Y$ and $\fx\in UX$,
$Uf(\fx)=\{A\subseteq Y \mid f^{-1}(A)\in\fx\}$. The unit $e_X:X\to UX$ on $X$
sends $x\in X$ to the principal ultrafilter $\doo{x}$ on $X$, and the
multiplication $m_X:U^2X \to UX$ is characterised, for every $\fX \in U^2X$, by
$m_X(\fX)=\{A\in UX\mid A^\#\in \fX\}$, where $A^\#=\{\fx\in UX\mid A\in
\fx\}$. The Eilenberg-Moore category of $\mU$, $\SET^{\mU}$, is equivalent to
the category of compact Hausdorff topological spaces and continuous maps (see
\cite{Man69}).

The following result (see \cite{Sto38}) ensures the existence of certain
ultrafilters and will be very important for later usage.
\begin{lemma}
  Let $\ff$ be a filter and $\fj$ be an ideal on a set $X$ such that
  $\ff\cap\fj= \varnothing$. Then there is an ultrafilter $\fr$ that extends
  $\ff$ and excludes $\fj$; that is, $\ff\subseteq \fr$ and
  $\fj\cap\fr =\varnothing$.
\end{lemma}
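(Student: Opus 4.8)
The plan is to obtain $\fr$ as a maximal element of a suitable poset of filters and then extract the ultrafilter property from maximality. First I would consider the collection $\mathcal{P}$ of all filters $\fg$ on $X$ satisfying $\ff\subseteq\fg$ and $\fg\cap\fj=\varnothing$, ordered by inclusion. This collection is non-empty since $\ff\in\mathcal{P}$, and every chain in $\mathcal{P}$ has an upper bound: the union of a chain of filters is again a filter (closure under finite intersections holds because any two members of the union already lie in a common filter of the chain), and this union clearly contains $\ff$ and remains disjoint from $\fj$. By Zorn's Lemma, $\mathcal{P}$ has a maximal element $\fr$.

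It remains to verify that $\fr$ is an ultrafilter. Since every ideal contains $\varnothing$, we have $\varnothing\in\fj$ and hence $\varnothing\notin\fr$, so $\fr$ is a proper filter. The crux is to show the dichotomy: for every $A\subseteq X$, either $A\in\fr$ or $X\setminus A\in\fr$. Suppose, towards a contradiction, that neither holds. Since $A\notin\fr$, the filter generated by $\fr\cup\{A\}$ strictly contains $\fr$; as it also contains $\ff$, maximality of $\fr$ forces it to meet $\fj$. Because every element of this generated filter contains a set of the form $F\cap A$ with $F\in\fr$, we obtain $F\in\fr$ and $j\in\fj$ with $F\cap A\subseteq j$. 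Symmetrically, from $X\setminus A\notin\fr$ we obtain $F'\in\fr$ and $j'\in\fj$ with $F'\cap(X\setminus A)\subseteq j'$.

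To finish, I would combine these two witnesses. Setting $G=F\cap F'$, which lies in $\fr$ by closure under finite intersections, a short computation gives
\[
  G=(G\cap A)\cup(G\cap(X\setminus A))\subseteq j\cup j'.
\]
Since $\fj$ is an ideal, $j\cup j'\in\fj$; since $\fr$ is closed under supersets and $G\in\fr$, also $j\cup j'\in\fr$. This contradicts $\fr\cap\fj=\varnothing$, establishing the dichotomy and hence that $\fr$ is an ultrafilter extending $\ff$ and excluding $\fj$. The main obstacle is the dichotomy step: one must carefully exploit maximality to convert the failure of $A\in\fr$ into a concrete containment $F\cap A\subseteq j$ inside $\fj$, and the symmetric containment for the complement, so that the ideal and filter axioms can be played against each other to produce the contradiction.
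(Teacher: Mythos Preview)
Your argument is correct and is the standard Zorn's Lemma proof of this result. The paper itself does not supply a proof: it merely states the lemma and refers the reader to \cite{Sto38}. So there is nothing to compare against beyond noting that your approach is exactly the classical one.

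One small point worth tightening: when you pass to the filter generated by $\fr\cup\{A\}$, you should note that this set system might be improper (if some $F\cap A=\varnothing$). In that case it is not, strictly speaking, a filter, so invoking maximality of $\fr$ in $\mathcal{P}$ is not quite the right phrasing. However, the conclusion you need---that some $F\cap A$ lies below an element of $\fj$---still follows immediately, since then $F\cap A=\varnothing\in\fj$. Making this case distinction explicit (or simply observing that in either case some $F\cap A\subseteq j\in\fj$) removes the only wrinkle.
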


\subsection{Ultrafilter theories}
\label{ssec:Top-theories}

In this paper we consider a particular case $\thU=\utheory$ of a topological
theory (in the sense of \cite{Hof07}) based on the ultrafilter monad
$\mU=\umonad$, a quantale $\V=\quantale$ and a map $\xi:U\V\to\V$. Here we
require $\utheory$ to satisfy all the axioms of the definition of a
\emph{strict} topological theory with the exception of the axiom regarding the
tensor product $\otimes$ of $\V$, for which it is enough to have lax
continuity. We call such a theory an \df{ultrafilter theory}. More in detail:
\begin{itemize}
\item the map $\xi:U\V\to\V$ is the structure of an Eilenberg--Moore algebra on
  $\V$,
  \begin{align*}
    \xymatrix{X\ar[r]^{e_X}\ar[dr]_{1_X} & UX\ar[d]^\xi\\ & X} && \xymatrix{UUX\ar[d]_{U\xi}\ar[r]^{m_X} & UX\ar[d]^\xi\\ UX\ar[r]_\xi & X}
  \end{align*}
  that is, $\xi:U\V\to\V$ is the convergence of a compact Hausdorff topology on
  $\V$;
\item The tensor product is ``laxly continuous'':
  \[
    \xymatrix{U(\V\times\V)\ar[d]_{\langle\xi U\pi_1,\xi
        U\pi_2\rangle}\ar[r]^{U\otimes}\ar@{}[dr]|\le & U\V\ar[d]^\xi\\
      \V\times\V\ar[r]_\otimes & \V}
  \]
\item $\xi$ is ``compatible with suprema in $\V$'' as specified in condition
  (Q$_{\bigvee}$) in \cite{Hof07}.
\end{itemize}

We call a theory $\thU$ satisfying even equality in the axiom involving the
tensor product a \df{strict ultrafilter theory}. We note that every ultrafilter
theory where $\otimes=\wedge$ is strict. Furthermore, $\thU$ is called
\df{compatible with finite suprema} whenever the diagram
\[
  \xymatrix{U(\V\times\V)\ar[d]_{\langle\xi U\pi_1,\xi
      U\pi_2\rangle}\ar[r]^-{U\vee} & U\V\ar[d]^\xi\\
    \V\times\V\ar[r]_\vee & \V}
\]
commutes. Note that we do not need to impose a condition on the empty supremum
since, for every ultrafilter theory, the diagram
\[
  \xymatrix{U1\ar[r]^{U\bot}\ar[d]_{} & U\V\ar[d]^{\xi} \\ 1\ar[r]_{\bot} & \V}
\]
commutes. For $u\in\V$, we consider the map
\[
  t_u:\V \longrightarrow\V,\,v \longmapsto u\otimes v.
\]
An ultrafilter theory $\thU=\utheory$ is called \df{pointwise strict} whenever,
for all $u\in\V$, the diagram
\[
  \xymatrix{U\V\ar[r]^{Ut_u}\ar[d]_{\xi} & U\V\ar[d]^{\xi} \\
    \V\ar[r]_{t_u} & \V}
\]
commutes. Clearly, every strict ultrafilter theory is pointwise strict.

The following result (see \cite[Theorem~3.3]{Hof07}) provides examples of
ultrafilter theories.

\begin{theorem}\label{thm:6}
  For every completely distributive quantale $\V$, the map
  \[
    \xi: U\V \longrightarrow\V,\,\fv \longmapsto
    \bigwedge_{A\in\fv}\bigvee_{u\in A}u
  \]
  defines an ultrafilter theory $\utheory$.
\end{theorem}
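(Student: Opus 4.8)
The plan is to check the three defining clauses of an ultrafilter theory for this $\xi$: that $\xi$ is an Eilenberg--Moore structure for $\mU$ on $\V$, that $\otimes$ is laxly continuous with respect to $\xi$, and that $\xi$ satisfies the supremum-compatibility condition $(\mathrm{Q}_{\bigvee})$ of \cite{Hof07}. The computational backbone of all three is a \emph{two-sided} description of $\xi$, which I would establish first: for every completely distributive $\V$ and every $\fv\in U\V$,
\[
  \xi(\fv)=\bigwedge_{A\in\fv}\bigvee_{u\in A}u=\bigvee_{A\in\fv}\bigwedge_{u\in A}u.
\]
The inequality $\bigwedge_{A\in\fv}\bigvee_{u\in A}u\geq\bigvee_{A\in\fv}\bigwedge_{u\in A}u$ is elementary: for $A,B\in\fv$ one has $A\cap B\in\fv$ (so $A\cap B\neq\varnothing$) and hence $\bigwedge_{u\in A}u\leq\bigwedge_{u\in A\cap B}u\leq\bigvee_{u\in A\cap B}u\leq\bigvee_{u\in B}u$, whence joining over $A$ and meeting over $B$ gives the claim. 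For the reverse inequality set $v:=\xi(\fv)$ and use complete distributivity in the form $v=\bigvee\{u\mid u\ll v\}$. Fix $u\ll v$. Since $v\leq\bigvee_{w\in A}w=\bigvee(\downc A)$ for every $A\in\fv$ and $\downc A$ is a down-set, the defining property of $\ll$ forces $u\in\downc A$, that is $A\cap\upc u\neq\varnothing$; as this holds for all $A\in\fv$ and $\fv$ is an ultrafilter, $\upc u\in\fv$. Therefore $\bigvee_{A\in\fv}\bigwedge_{w\in A}w\geq\bigwedge_{w\in\upc u}w=u$, and joining over all $u\ll v$ yields $\bigvee_{A\in\fv}\bigwedge_{w\in A}w\geq v$.

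With the two-sided formula in hand, the Eilenberg--Moore axioms reduce to a unit law and an associativity law. The unit law $\xi(e_\V(v))=v$ is immediate from the meet-of-joins form: $e_\V(v)=\doo{v}=\{A\mid v\in A\}$, every such $A$ satisfies $\bigvee_{u\in A}u\geq v$, and the choice $A=\{v\}$ attains the value $v$. The associativity law $\xi\cdot U\xi=\xi\cdot m_\V$ on $UU\V$ is the main obstacle. I would argue it conceptually by recognising $\xi$ as the ultrafilter convergence of the Lawson (equivalently interval) topology on $\V$: a completely distributive lattice is in particular a continuous lattice whose Lawson topology is compact Hausdorff, with $\xi$ in its two-sided form as the unique limit map, so that Manes' characterisation \cite{Man69} of compact Hausdorff spaces as the Eilenberg--Moore algebras of $\mU$ produces both identities at once. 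A more self-contained alternative is to expand $m_\V(\fX)=\{A\mid A^{\#}\in\fX\}$ and $U\xi(\fX)$ and to match the resulting joins of infima obtained from the two representations of $\xi$; the bookkeeping with the sets $A^{\#}$ is where I expect the genuine difficulty to lie.

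For the lax continuity of $\otimes$ the two-sided formula does the work cleanly. Writing $\fv_1=U\pi_1(\fw)$, $\fv_2=U\pi_2(\fw)$ and $\fv=U\otimes(\fw)$ for $\fw\in U(\V\times\V)$, I would use the join representation of $\xi$ together with the distribution of $\otimes$ over joins in each variable to obtain $\xi(\fv_1)\otimes\xi(\fv_2)=\bigvee_{A_1\in\fv_1,\,A_2\in\fv_2}(\bigwedge_{a\in A_1}a)\otimes(\bigwedge_{b\in A_2}b)$. For fixed $A_1,A_2$ the set $C:=\otimes(A_1\times A_2)$ lies in $\fv$, since $A_1\times A_2=\pi_1^{-1}(A_1)\cap\pi_2^{-1}(A_2)\in\fw$; monotonicity of $\otimes$ gives $\bigwedge_{c\in C}c\geq(\bigwedge_{a\in A_1}a)\otimes(\bigwedge_{b\in A_2}b)$, and $\xi(\fv)\geq\bigwedge_{c\in C}c$ because $C\in\fv$. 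Joining over $A_1,A_2$ then yields $\xi(\fv_1)\otimes\xi(\fv_2)\leq\xi(\fv)$, which is exactly the lax square.

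Finally, condition $(\mathrm{Q}_{\bigvee})$ of \cite{Hof07} records the compatibility of $\xi$ with the formation of suprema in $\V$, and I would dispatch it by the same mechanism: expand $\xi$ through its defining meet-of-joins, interchange suprema, and invoke complete distributivity once more via $v=\bigvee\{u\mid u\ll v\}$. The single point at which mere completeness of $\V$ would not suffice is the passage between the meet-of-joins and the join-of-meets descriptions of $\xi$; complete distributivity enters precisely there, and through it in each of the three verifications.
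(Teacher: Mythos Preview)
The paper does not actually prove this theorem: it merely cites \cite[Theorem~3.3]{Hof07}. Your proposal therefore supplies what the paper omits, and the approach you outline is correct and essentially the one taken in the cited reference. The two-sided description of $\xi$ that you establish first is precisely the ``self-dual'' formula the paper records immediately after the statement, and your use of it to verify the lax continuity of $\otimes$ via the join-of-meets form is the standard argument. Your conceptual shortcut for the Eilenberg--Moore axioms---recognising $\xi$ as Lawson convergence on a continuous lattice and invoking Manes' theorem---is a clean way to avoid the $A^{\#}$ bookkeeping, though you should be aware that the direct computation is also routine once the two-sided formula is available. The one place where your sketch is thin is $(\mathrm{Q}_{\bigvee})$: since this condition is stated only by reference to \cite{Hof07}, you would need to unpack its precise form before claiming that ``the same mechanism'' dispatches it; but the claim is correct.
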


Somehow surprisingly, the formula above depends only on the lattice structure of
$\V$; moreover, it is \emph{self-dual} in the sense that
\[
  \xi(\fv) =\bigwedge_{A\in\fv}\bigvee_{u\in A}u
  =\bigvee_{A\in\fv}\bigwedge_{u\in A}u,
\]
for all $\fv\in U\V$. For the lattices $\V=\two$, $\V=[0,1]$, $\V=[0,\infty]$
and $\V=\ddf$ we denote the corresponding map $\xi:U\V\to\V$ by $\xi_\two$,
$\xi_{[0,1]}$, $\xi_{[0,\infty]}$ and $\xi_\ddf$, respectively.

\begin{proposition}\label{d:prop:1}
  Let $\V$ be a completely distributive quantale and $\xi:U\V\to\V$ as in
  Theorem~\ref{thm:6}. Then $\thU=\utheory$ is compatible with finite suprema.
\end{proposition}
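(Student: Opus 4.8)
The plan is to reduce the claimed commutativity to an elementary identity about infima of joins in the completely distributive lattice $\V$, using a convenient ``liminf'' description of $\xi$. First I would record that, for any set $X$, any map $h\colon X\to\V$ and any ultrafilter $\fx$ on $X$,
\[
  \xi(Uh(\fx))=\bigwedge_{B\in\fx}\bigvee_{x\in B}h(x).
\]
This follows directly from the defining formula $\xi(\fv)=\bigwedge_{A\in\fv}\bigvee_{u\in A}u$ together with $Uh(\fx)=\{A\subseteq\V\mid h^{-1}(A)\in\fx\}$: the inequality ``$\le$'' is obtained by taking $A=h(B)$ for $B\in\fx$ (which lies in $Uh(\fx)$ since $\fx$ is upward closed), and ``$\ge$'' by taking $B=h^{-1}(A)$ and using $h(h^{-1}(A))\subseteq A$.

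Next I would specialise this to $X=\V\times\V$, the ultrafilter $\fw\in U(\V\times\V)$, and the three maps $\pi_1,\pi_2,\vee\colon\V\times\V\to\V$. Writing $p(B)=\bigvee_{(u,v)\in B}u$ and $q(B)=\bigvee_{(u,v)\in B}v$ for $B\in\fw$, the formula above gives $\xi(U\pi_1(\fw))=\bigwedge_{B\in\fw}p(B)$, $\xi(U\pi_2(\fw))=\bigwedge_{B\in\fw}q(B)$, and, since the join of a subset of $\V\times\V$ splits into its two coordinates, $\xi(U\vee(\fw))=\bigwedge_{B\in\fw}\bigl(p(B)\vee q(B)\bigr)$. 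Thus everything reduces to the identity
\[
  \bigwedge_{B\in\fw}\bigl(p(B)\vee q(B)\bigr)=\Bigl(\bigwedge_{B\in\fw}p(B)\Bigr)\vee\Bigl(\bigwedge_{B\in\fw}q(B)\Bigr).
\]

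For this final identity the inequality ``$\ge$'' is immediate, since $p(B)\vee q(B)\ge p(B)$ and $p(B)\vee q(B)\ge q(B)$ for every $B$. The reverse inequality is the crux, and is where both the filter structure and complete distributivity enter. Given $B_1,B_2\in\fw$, the set $B_1\cap B_2$ again belongs to $\fw$ and satisfies $p(B_1\cap B_2)\le p(B_1)$ and $q(B_1\cap B_2)\le q(B_2)$; hence $\bigwedge_{B\in\fw}(p(B)\vee q(B))\le p(B_1)\vee q(B_2)$, and taking the infimum over $B_1,B_2$ yields $\bigwedge_{B\in\fw}(p(B)\vee q(B))\le\bigwedge_{B_1,B_2\in\fw}\bigl(p(B_1)\vee q(B_2)\bigr)$. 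Since $\V$ is completely distributive, so is $\V^\op$, and therefore binary joins in $\V$ distribute over arbitrary infima; applying this coframe law twice collapses the double infimum to $\bigl(\bigwedge_{B_1}p(B_1)\bigr)\vee\bigl(\bigwedge_{B_2}q(B_2)\bigr)$, which is exactly the right-hand side.

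I expect the only genuinely delicate point to be this last step: one must pair the two separate indices through the closure of $\fw$ under finite intersection \emph{before} the coframe law can be applied coordinatewise, since $\vee$ does not distribute over an infimum in which both arguments vary independently. It is worth remarking that only the filter (not the full ultrafilter) property of $\fw$ is used, so the argument in fact establishes the compatibility of $\xi$ with binary joins for every filter.
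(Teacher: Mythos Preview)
Your proof is correct. The liminf reformulation of $\xi\circ Uh$ is standard and your verification is clean; the reduction to the identity $\bigwedge_{B}(p(B)\vee q(B))=\bigl(\bigwedge_B p(B)\bigr)\vee\bigl(\bigwedge_B q(B)\bigr)$ is exactly right, and the two-step argument for ``$\le$'' (first pass to independent indices via closure of $\fw$ under finite intersection, then apply the coframe law coordinatewise) is precisely what is needed. Your closing observation that only the filter property of $\fw$ is used in the identity is also accurate.

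The paper's proof, however, takes a quite different route: rather than computing anything, it simply re-applies Theorem~\ref{thm:6} to the quantale $\V^\op$ equipped with $\otimes=\wedge_{\V^\op}=\vee_{\V}$. Since $\V^\op$ is again completely distributive (hence a frame), this yields an ultrafilter theory with $\otimes=\wedge$, which is automatically strict; together with the self-duality $\xi(\fv)=\bigwedge_{A\in\fv}\bigvee A=\bigvee_{A\in\fv}\bigwedge A$ recorded just after Theorem~\ref{thm:6}, this gives the required commutativity for $\vee$ in one line. Your argument is more elementary and self-contained, and exposes exactly where each hypothesis (filter closure, coframe law) enters, at the cost of a page of calculation; the paper's argument is shorter and conceptually pleasing, but relies on the non-trivial content of Theorem~\ref{thm:6} as a black box. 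Both ultimately rest on the same lattice-theoretic fact, namely that binary joins in a completely distributive lattice distribute over arbitrary meets.
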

\begin{proof}
  Just apply Theorem~\ref{thm:6} to the quantale $\V^\op$ with tensor product
  given by binary suprema $\vee$ in $\V$. Here we use that also the lattice
  $\V^\op$ is completely distributive and therefore in particular a frame.
\end{proof}

\begin{examples}\label{exs:3}
  According to the quantales introduced in Examples~\ref{exs:1}, and keeping in
  mind Examples~\ref{exs:2}, we have the following examples of ultrafilter
  theories.
  \begin{enumerate}
  \item For $\V=\two$, the convergence of Theorem~\ref{thm:6} corresponds to the
    discrete topology on $\two$. We denote this theory as $\thU_\two$.
  \item For the quantales based on the lattices $[0,1]$ and $[0,\infty]$, the
    convergence of Theorem~\ref{thm:6} corresponds to the usual Euclidean
    topology. We denote the corresponding theories by $\thU_{\Pp}$,
    $\thU_{\Pm}$, $\thU_{[0,1]_*}$, $\thU_{[0,1]_\wedge}$, and
    $\thU_{[0,1]_{\odot}}$, respectively.
  \item We will denote the ultrafilter theory based on the quantale $\ddf$ and
    on the the convergence of Theorem~\ref{thm:6} by $\thU_{\ddf}$.
  \end{enumerate}
\end{examples}

\begin{remark}
  For each of the quantales $\V=\two$, $\V=[0,1]$ and $\V=[0,\infty]$, the
  theory obtained from Theorem~\ref{thm:6} is strict. However, we do not know if
  there is a strict ultrafilter theory involving the quantale $\ddf$ of
  distribution functions.
\end{remark}

\begin{definition}
  Let $\thU_1=\utheoryone$ and $\thU_2=\utheorytwo$ be ultrafilter theories and
  $\varphi:\V_1\to\V_2$ be a lax quantale morphism. Then $\varphi$ is
  \df{compatible} with $\thU_1$ and $\thU_2$ whenever, for all $\fv\in U\V_1$,
  $\xi_2\cdot U\varphi(\fv)\le \varphi\cdot\xi_1(\fv)$.
  \[
    \xymatrix{U\V_1\ar[r]^{U\varphi}\ar[d]_{\xi_1}\ar@{}[dr]|\ge &
      U\V_2\ar[d]^{\xi_2} \\ \V_1\ar[r]_{\varphi} & \V_2}
  \]
\end{definition}

For instance, for every ultrafilter theory $\thU=\utheory$, the canonical
map $i:\two\to\V$ (see Subsection~\ref{ssec:quant-enrich-categ}) is a lax
quantale morphism making the diagram
\[
  \xymatrix{U\two\ar[r]^{Ui}\ar[d]_{\xi_\two} & U\V\ar[d]^{\xi} \\
    \two\ar[r]_{i} & \V}
\]
commutative; hence $i$ is compatible with $\thU_\two$ and $\thU$.
\begin{lemma}\label{d:lem:3}
  Let $\thU=\utheory$ be an ultrafilter theory where $k$ is terminal in
  $\V$. Then the right adjoint $p:\V\to\two$ of $i$ is compatible with $\thU$
  and $\thU_\two$.
\end{lemma}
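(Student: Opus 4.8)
The plan is to unwind the definition of compatibility and verify the single required inequality pointwise. Concretely, I must show that $\xi_\two\cdot Up(\fv)\le p\cdot\xi(\fv)$ for every ultrafilter $\fv\in U\V$. Since the codomain is $\two$, this amounts to the implication: if $\xi_\two(Up(\fv))=1$ then $p(\xi(\fv))=1$, that is, $\xi(\fv)\ge k$. Recall that $i\dashv p$ and that $p(v)=1$ exactly when $v\ge k$, so $p^{-1}(1)=\upw k$.

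First I would identify the left-hand condition. On the two-element lattice the only ultrafilters are the principal ones $\doo 0,\doo 1$, and a direct evaluation of the self-dual formula of Theorem~\ref{thm:6} gives $\xi_\two(\doo 1)=1$ and $\xi_\two(\doo 0)=0$; hence $\xi_\two(\mathfrak b)=1$ precisely when $\{1\}\in\mathfrak b$. Applying this to $\mathfrak b=Up(\fv)$ and using the description $B\in Up(\fv)\iff p^{-1}(B)\in\fv$ of the image ultrafilter, one gets $\{1\}\in Up(\fv)$ iff $p^{-1}(1)=\upw k\in\fv$. Thus the hypothesis $\xi_\two(Up(\fv))=1$ is equivalent to $\upw k\in\fv$.

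The decisive step is then to deduce $\xi(\fv)\ge k$ from $\upw k\in\fv$. Writing $\xi(\fv)=\bigwedge_{A\in\fv}\bigvee_{u\in A}u$, I fix an arbitrary $A\in\fv$; since both $A$ and $\upw k$ belong to the ultrafilter $\fv$, their intersection is again a member of $\fv$ and in particular non-empty, so $A$ contains some $u$ with $u\ge k$. Therefore $\bigvee_{u\in A}u\ge k$ for every $A\in\fv$, and taking the infimum over $A\in\fv$ yields $\xi(\fv)\ge k$, as required. This elementary filter-theoretic fact is the only place where the behaviour of the convergence $\xi$ enters, and it is the heart of the argument.

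I do not anticipate a genuine obstacle: the terminality of $k$ guarantees that $p$ is indeed the right adjoint of $i$ and a lax quantale morphism, so that the statement is well-posed, after which the verification reduces to the computation above. The only point requiring care is the translation between the value $\xi_\two(Up(\fv))$ and the membership $\upw k\in\fv$; once that is in place, the monotone, filtered nature of the defining formula for $\xi$ closes the argument immediately.
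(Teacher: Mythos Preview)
Your reduction of the compatibility condition to the implication ``$\upc k\in\fv \Rightarrow \xi(\fv)\ge k$'' is correct and matches the paper. The gap is in how you verify that implication: you invoke the explicit formula $\xi(\fv)=\bigwedge_{A\in\fv}\bigvee_{u\in A}u$ from Theorem~\ref{thm:6}, but the lemma is stated for an \emph{arbitrary} ultrafilter theory $\thU=\utheory$, where $\xi$ is merely some Eilenberg--Moore algebra structure satisfying the axioms of Subsection~\ref{ssec:Top-theories}. Nothing in the hypotheses lets you write $\xi$ in that form, so your filter-theoretic computation of $\bigwedge_{A\in\fv}\bigvee A$ is not available. (Indeed, the paper treats the case where $\xi$ is as in Theorem~\ref{thm:6} separately, via Lemma~\ref{d:lem:4}.)

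The paper's argument uses the terminality of $k$ at exactly this point, and this is where you should use it too: since $k$ is the top element, $\upc k=\{k\}$, hence $\{k\}\in\fv$ forces $\fv=\doo{k}$, and then the unit law $\xi\cdot e_\V=1_\V$ of the algebra $\xi$ gives $\xi(\fv)=\xi(\doo{k})=k$. Your proof never exploits terminality in this way (you only cite it to justify that $p$ is right adjoint to $i$, which in fact holds for any quantale), so the hypothesis is left idle while an unwarranted one is imported.
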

\begin{proof}
  Let $\fv\in U\V$ and assume that $\xi_\two(Up(\fv))=1$. Then $Up(\fv)=\doo{1}$
  and therefore $\upc k\in\fv$. If $k$ is the top-element of $\V$, then
  $\{k\}\in\fv$ and consequently $\xi(\fv)=k$.
\end{proof}

\begin{lemma}\label{d:lem:4}
  Let $\thU_1=\utheoryone$ and $\thU_2=\utheorytwo$ be ultrafilter theories
  where $\xi_1,\xi_2$ are as in Theorem~\ref{thm:6}.  Assume that
  $\varphi:\V_1\to\V_2$ is a lax quantale morphism preserving codirected
  infima. Then $\varphi$ is compatible with $\thU_1$ and $\thU_2$.
\end{lemma}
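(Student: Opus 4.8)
The plan is to prove the required inequality $\xi_2\cdot U\varphi(\fv)\le\varphi\cdot\xi_1(\fv)$ directly, exploiting the explicit description of $\xi_1$ and $\xi_2$ furnished by Theorem~\ref{thm:6}. Fix $\fv\in U\V_1$; by that theorem both convergences are computed as
\[
  \xi_1(\fv)=\bigwedge_{A\in\fv}\bigvee_{u\in A}u,
  \qquad
  \xi_2(\fw)=\bigwedge_{B\in\fw}\bigvee_{w\in B}w,
\]
so the whole statement reduces to a comparison of two infima of joins.

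First I would rewrite the right-hand side. Because $\fv$ is an ultrafilter, it is in particular closed under finite intersections, so the family $\bigl(\bigvee_{u\in A}u\bigr)_{A\in\fv}$ is codirected: given $A,A'\in\fv$, the index $A\cap A'\in\fv$ yields a join below both $\bigvee_{u\in A}u$ and $\bigvee_{u\in A'}u$. Invoking the hypothesis that $\varphi$ preserves codirected infima, I obtain
\[
  \varphi(\xi_1(\fv))
  =\varphi\Bigl(\bigwedge_{A\in\fv}\bigvee_{u\in A}u\Bigr)
  =\bigwedge_{A\in\fv}\varphi\Bigl(\bigvee_{u\in A}u\Bigr).
\]
This is the only point at which the codirected-infima hypothesis is used; everywhere else mere monotonicity of $\varphi$ suffices.

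The key step is to estimate $\xi_2(U\varphi(\fv))$ by a single, well-chosen member of the pushforward ultrafilter for each $A\in\fv$. Recall $U\varphi(\fv)=\{B\subseteq\V_2\mid\varphi^{-1}(B)\in\fv\}$. For $A\in\fv$ the direct image $\varphi(A)$ satisfies $\varphi^{-1}(\varphi(A))\supseteq A$, and since $\fv$ is upward closed this gives $\varphi(A)\in U\varphi(\fv)$. Taking this particular $B=\varphi(A)$ in the infimum defining $\xi_2$ yields
\[
  \xi_2(U\varphi(\fv))\le\bigvee_{w\in\varphi(A)}w=\bigvee_{u\in A}\varphi(u)\le\varphi\Bigl(\bigvee_{u\in A}u\Bigr),
\]
the last inequality being monotonicity of $\varphi$. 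Taking the infimum over all $A\in\fv$ and combining with the previous display gives $\xi_2(U\varphi(\fv))\le\varphi(\xi_1(\fv))$, as required.

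I expect the only delicate point to be the ultrafilter bookkeeping in the key step — verifying that the direct image $\varphi(A)$ genuinely lies in $U\varphi(\fv)$ via the inclusion $\varphi^{-1}(\varphi(A))\supseteq A$ — together with correctly locating the single use of the codirectedness hypothesis. It is worth noting that the argument uses neither the self-duality of $\xi$ nor the multiplicative part of the lax-quantale-morphism axioms: only monotonicity of $\varphi$ and preservation of codirected infima enter.
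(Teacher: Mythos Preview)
Your proof is correct and follows essentially the same route as the paper: both compute $\xi_2(U\varphi(\fv))$ via the formula of Theorem~\ref{thm:6}, bound $\bigvee_{u\in A}\varphi(u)$ above by $\varphi\bigl(\bigvee_{u\in A}u\bigr)$ using monotonicity, and then pull $\varphi$ through the codirected infimum over $A\in\fv$. The paper's version asserts the equality $\xi_2(U\varphi(\fv))=\bigwedge_{A\in\fv}\bigvee_{u\in A}\varphi(u)$ in one line (using that the images $\varphi(A)$ form a base for $U\varphi(\fv)$), whereas you only extract the inequality $\xi_2(U\varphi(\fv))\le\bigvee_{u\in A}\varphi(u)$ for each $A\in\fv$; your version is slightly more explicit about the ultrafilter bookkeeping but otherwise identical.
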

\begin{proof}
  Let $\fv\in U\V_1$. Then
  \begin{align*}
    \xi_2(U\varphi(\fv))
    &=\bigwedge_{A\in\fv}\bigvee_{u\in A}\varphi(u)\\
    &\leq \bigwedge_{A\in\fv}\varphi\left(\bigvee_{u\in A}u\right)\\
    &=\varphi\left(\bigwedge_{A\in\fv}\bigvee_{u\in A}u\right)
      =\varphi(\xi_1(\fv)).\qedhere
  \end{align*}
\end{proof}
The result above applies in particular when $\varphi:\V_1\to\V_2$ is right
adjoint. For instance, if $\thU=\utheory$ is an ultrafilter theory where $\V$ is
completely distributive and $\xi:U\V\to\V$ is as in Theorem~\ref{thm:6}, then
$p:\V\to\two$ is compatible with $\thU$ and $\thU_\two$ since $i\dashv p$.

\begin{examples} \label{exs:functors_Ucompatible} Recall the chain
  $O_\infty \dashv I_\infty \dashv P_\infty$ of adjoint lax quantale morphisms
  introduced in Example~\ref{exs:quantale_morphs}
  (\ref{functors_in_delta}). Since $I_\infty$ and $P_\infty$ are both right
  adjoints, they are compatible with the ultrafilter theories $\thU_{\Pp}$ and
  $\thU_{\ddf}$.
\end{examples}

\subsection{Extending the monad}
\label{sec:extending-monad}

Given an ultrafilter theory $\thU = \utheory$, we extend the functor
$U:\SET\to\SET$ to a lax functor $\Uxi$ on $\Rels{\V}$ by putting $\Uxi X=UX$
for each set $X$ and
\begin{align*}
  \Uxi  r:UX\times UY & \longrightarrow \V \\
  (\fx,\fy) &\longmapsto\bigvee\left\{\xi\cdot Ur(\fw)\;\Bigl\lvert\;\fw\in U(X\times Y), U\pi_{X}(\fw)=\fx, U\pi_{Y}(\fw)=\fy\right\}
\end{align*}
for each $\V$-relation $r:X\times Y\to\V$. Then we have:

\begin{theorem}\label{thm:1}
  Let $\thU=\utheory$ be an ultrafilter theory. Then the following assertions
  hold.
  \begin{enumerate}
  \item For each $\V$-relation $r:X\relto Y$, $\Uxi (r^\circ)=\Uxi (r)^\circ$
    (and we write $\Uxi r^\circ$).
  \item For each function $f:X\to Y$, $Uf= \Uxi f$ and
    $(Uf)^\circ= \Uxi (f^\circ)$.
  \item For each $\V$-relation $r:X\relto Y$ and functions $f:A\to X$ and
    $g:Y\to Z$,
    \begin{align*}
      \Uxi (g\cdot r)&=Ug\cdot \Uxi r &\text{and}&& \Uxi (r\cdot f)&= \Uxi r\cdot Uf.
    \end{align*}
  \item For all $\V$-relations $r:X\relto Y$ and $s:Y\relto Z$,
    $\Uxi s\cdot \Uxi r\le \Uxi (s\cdot r)$. We have even equality if $\thU$ is
    a strict theory.
  \item Then $e$ becomes an op-lax natural transformation $e:1\to\Uxi$ and $m$ a
    natural transformation $m:\Uxi\Uxi\to\Uxi$, that is, for every $\V$-relation
    $r:X\relto Y$ we have
    \begin{align*}
      e_{_Y}\cdot r\le \Uxi r\cdot e_{_X}, && m_{Y}\cdot \Uxi \Uxi r= \Uxi r\cdot m_{X}.\\
      \xymatrix{X\ar[r]^-{e_{X}}\ar[d]|{\object@{|}}_r\ar@{}[dr]|{\le} & \Uxi X\ar[d]|{\object@{|}}^{\Uxi r}\\ Y\ar[r]_-{e_{Y}} & {U_{\!\xi}}Y} &&
                                                                                                                                                   \xymatrix{\Uxi \Uxi X\ar[r]^-{m_{X}}\ar[d]|{\object@{|}}_{\Uxi \Uxi r}  & \Uxi X\ar[d]|{\object@{|}}^{\Uxi r}\\ \Uxi \Uxi  Y\ar[r]_-{m_{Y}} & \Uxi Y}
    \end{align*}
  \end{enumerate}
\end{theorem}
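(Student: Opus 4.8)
The plan is to verify the five assertions essentially in the order (2), (1), (3), (4), (5), treating the interaction of $\Uxi$ with functions first and reserving the genuinely lax statement (4) and the strict naturality of $m$ in (5) for the end. Throughout, the workhorse is the existence of ultrafilters on (iterated) products with prescribed marginal projections, supplied by the ultrafilter extension lemma recalled above via finite-intersection-property arguments.

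For the first equality in (2), regard $f:X\to Y$ as the $\V$-relation concentrated on its graph $G_f=\{(x,f(x))\}$, taking the value $k$ there and $\bot$ elsewhere. For $\fw\in U(X\times Y)$ the pushforward $Uf(\fw)$ is then the principal ultrafilter $\doo k$ or $\doo\bot$ according to whether $G_f\in\fw$, so by the unit law $\xi\cdot e_\V=1_\V$ each term $\xi\cdot Uf(\fw)$ equals $k$ or $\bot$. Since the unique $\fw$ with $U\pi_X(\fw)=\fx$ containing $G_f$ projects to $Uf(\fx)$ on $Y$, the defining supremum of $\Uxi f(\fx,\fy)$ is $k$ exactly when $\fy=Uf(\fx)$ and $\bot$ otherwise, which is precisely $Uf$ viewed as a $\V$-relation. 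Assertion (1) is a change of variables: the swap $\sigma:X\times Y\to Y\times X$ satisfies $r^\circ\circ\sigma=r$ and induces a bijection $U\sigma$ interchanging the two projections, so it carries the indexing set of $\Uxi r(\fx,\fy)$ onto that of $\Uxi r^\circ(\fy,\fx)$ while preserving the values $\xi\cdot Ur(-)$; hence $\Uxi(r^\circ)=(\Uxi r)^\circ$. The second equality in (2) follows by combining the first with (1).

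For the whiskering identities (3) I would argue by two matching inequalities. In $\Uxi(r\cdot f)=\Uxi r\cdot Uf$ with $f:A\to X$ one has $r\cdot f=r\circ(f\times 1_Y)$, so reindexing along $U(f\times 1_Y)$ gives $\Uxi(r\cdot f)\le\Uxi r\cdot Uf$ at once; for the reverse inequality, given $\fw\in U(X\times Y)$ with $U\pi_X(\fw)=Uf(\fa)$, I identify $A\times Y$ with the pullback of $f$ and $\pi_X$ and use the extension lemma to lift $\fw$ and $\fa$ to an ultrafilter on $A\times Y$ projecting to $\fa$ and mapping to $\fw$ under $f\times 1_Y$, realising every term of $\Uxi r\cdot Uf$ as a term of $\Uxi(r\cdot f)$. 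The codomain identity $\Uxi(g\cdot r)=Ug\cdot\Uxi r$ with $g:Y\to Z$ is subtler, since $g\cdot r(x,z)=\bigvee_{g(y)=z}r(x,y)$ is a fibrewise supremum: the inequality $Ug\cdot\Uxi r\le\Uxi(g\cdot r)$ is a special case of (4) together with $\Uxi g=Ug$, whereas the reverse inequality is exactly where the compatibility of $\xi$ with suprema (condition $(\mathrm{Q}_{\bigvee})$ of~\cite{Hof07}) is needed, to select, ultrafilter-wise, fibres nearly attaining this supremum and so bound $\xi\cdot U(g\cdot r)$ below $\Uxi r(\fx,\fy)$ for a suitable $\fy$ with $Ug(\fy)=\fz$.

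For the lax functoriality (4), expand $(\Uxi s\cdot\Uxi r)(\fx,\fz)=\bigvee_\fy\Uxi r(\fx,\fy)\otimes\Uxi s(\fy,\fz)$ and, using distributivity of $\otimes$ over suprema, reduce to a single product $\xi\cdot Ur(\fw_r)\otimes\xi\cdot Us(\fw_s)$ with $\fw_r,\fw_s$ sharing the common marginal $\fy$ on $Y$. Realising $X\times Y\times Z$ as the fibre product $(X\times Y)\times_Y(Y\times Z)$, the extension lemma glues $\fw_r,\fw_s$ to an ultrafilter $\fW$ projecting to each; then the lax continuity of $\otimes$, applied to $U\langle r\circ q,\,s\circ q'\rangle(\fW)$ where $q,q'$ are the two projections, gives $\xi\cdot Ur(\fw_r)\otimes\xi\cdot Us(\fw_s)\le\xi\cdot U((r\circ q)\otimes(s\circ q'))(\fW)$, and the pointwise bound $(r\circ q)\otimes(s\circ q')\le(s\cdot r)\circ p$ (with $p:X\times Y\times Z\to X\times Z$) together with monotonicity of $t\mapsto\xi\cdot Ut(-)$ (a basic property of ultrafilter theories) bounds this by $\xi\cdot U(s\cdot r)(Up(\fW))\le\Uxi(s\cdot r)(\fx,\fz)$. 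Taking suprema yields (4); for a strict theory the same gluing, now combined with the \emph{equality} in the continuity axiom and a fibrewise selection as in (3), yields the reverse inequality and hence equality. Finally, in (5) the op-lax naturality of $e$ is immediate: for $\fy=\doo{y}$ the canonical ultrafilter $\doo{(x,y)}$ realises $r(x,y)$ inside $\Uxi r(\doo{x},\doo{y})$ via the unit law, and both sides are $\bot$ when $\fy$ is non-principal, giving $e_Y\cdot r\le\Uxi r\cdot e_X$. The strict naturality $m_Y\cdot\Uxi\Uxi r=\Uxi r\cdot m_X$ is the most laborious point: one inequality again comes from reindexing along $m$, while the other requires disassembling an ultrafilter over a given $\fX\in UUX$ and reassembling it using the monad associativity $m\cdot Um=m\cdot m_{U}$, the already-established behaviour of $\Uxi$ on projections, and the Eilenberg--Moore associativity $\xi\cdot U\xi=\xi\cdot m_\V$. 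I expect this strict equality for $m$, rather than the merely lax conditions, to be the main obstacle, since it is here that the full force of the monad axioms and of $\xi$ being an algebra structure—not just a monotone, sup-compatible map—must be brought to bear.
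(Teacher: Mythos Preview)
The paper does not prove this theorem: it is stated without proof, the results being taken as established in the references on topological theories and lax extensions (notably \cite{Hof07} and \cite{HST14}, where detailed arguments of exactly this shape appear). Your sketch follows that standard route---the swap bijection for (1), graph-of-$f$ reasoning for (2), fibre-product/ultrafilter-lifting arguments for (3) and (4) with the $(\mathrm{Q}_{\bigvee})$ condition entering precisely where you locate it, and the algebra law $\xi\cdot U\xi=\xi\cdot m_\V$ together with naturality of $m$ for the strict square in (5)---so there is nothing to compare against in the paper itself beyond noting that your outline matches the literature it cites.

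One small point worth tightening: in your treatment of (5) you write that ``both sides are $\bot$ when $\fy$ is non-principal''. The right-hand side $\Uxi r(\doo{x},\fy)$ need not be $\bot$ for non-principal $\fy$; only the left-hand side $e_Y\cdot r(x,\fy)$ is, and that already suffices for the inequality. Also, your plan to derive one half of $\Uxi(g\cdot r)=Ug\cdot\Uxi r$ ``as a special case of (4)'' is circular as stated, since you prove (4) after (3); in practice the inequality $Ug\cdot\Uxi r\le\Uxi(g\cdot r)$ is obtained directly by the same reindexing-along-$1_X\times g$ argument you use elsewhere in (3), not via (4).
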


\subsection{$\V$-categorical compact Hausdorff spaces}
\label{sec:v-categ-comp}

Based on the lax extension of the $\SET$-monad $\mU=\umonad$ to $\Rels{\V}$
described in Subsection~\ref{sec:extending-monad}, the $\SET$-monad $\mU$ admits
a natural extension to a monad on $\Cats{\V}$, in the sequel also denoted as
$\mU=\umonad$ (see \cite{Tho09}). Here the functor $U:\Cats{\V}\to\Cats{\V}$
sends a $\V$-category $(X,a_0)$ to $(UX,\Uxi a_0)$, and with this definition
$e_X:X\to UX$ and $m_X:UUX\to UX$ become $\V$-functors for each $\V$-category
$X$.

\begin{definition}
  Let $\thU=\utheory$ be an ultrafilter theory. An Eilenberg--Moore algebras for
  the monad $\mU$ on $\Cats{\V}$ is called \df{$\V$-categorical compact
    Hausdorff space}.
\end{definition}

Hence, a $\V$-categorical compact Hausdorff space can be described as a triple
$(X,a_0,\alpha)$ where $(X,a_0)$ is a $\V$-category and $\alpha:UX\to X$ is the
convergence of a compact Hausdorff topology on $X$ such that
$\alpha:U(X,a_0)\to(X,a_0)$ is a $\V$-functor. For $\mU$-algebras
$(X,a_0,\alpha)$ and $(Y,b_0,\beta)$, a map $f:X\to Y$ is a homomorphism
$f:(X,a_0,\alpha)\to(Y,b_0,\beta)$ precisely if $f$ preserves both structures,
that is, whenever $f:(X,a_0)\to(Y,b_0)$ is a $\V$-functor and
$f:(X,\alpha)\to(Y,\beta)$ is continuous. Since the extension $\Uxi$ of $U$
commutes with the involution $(-)^\circ$, with $X=(X,a_0,\alpha)$ also
$(X,a_0^\circ,\alpha)$ is a $\V$-categorical compact Hausdorff space. It follows
from \cite[Lemma 3.2]{Hof07} that the $\V$-category $(\V,\hom)$ combined with
the $\mU$-algebra structure $\xi$ induces the $\V$-categorical compact Hausdorff
space $\V=(\V,\hom,\xi)$.

\begin{examples}
  \begin{enumerate}
  \item Our motivating example is produced by $\thU=\thU_\two$. In this case,
    the objects of the Eilenberg-Moore category for the monad $\mU$ on $\ORD$
    are precisely the ordered compact Hausdorff spaces introduced in
    \cite{Nac50}, and the homomorphisms are the monotone continuous map. We
    denote this category by $\ORDCH$. We recall that an \df{ordered compact
      Hausdorff space} $X$ is a set equipped with an order relation $\le$ and a
    compact Hausdorff topology so that
    \[
      \{(x,y)\mid x\le y\}\subseteq X\times X
    \]
    is closed with respect to the product topology. It is shown in \cite{Tho09}
    that this condition is equivalent to being an Eilenberg--Moore algebra for
    the ultrafilter monad on $\ORD$.
  \item For $\thU=\thU_{\Pp}$, we put $\METCH=\MET^\mU$ and call an object of
    $\METCH$ a \df{metric compact Hausdorff space}.
  \item Similarly, for $\thU=\thU_\ddf$, the objects of $\PROBMET^\mU$ are
    called \df{probabilistic metric compact Hausdorff spaces}. The category
    $\PROBMET^\mU$ will be represented by $\PROBMETCH$.
  \end{enumerate}
\end{examples}

\begin{proposition}
  Let $\thU_1=\utheoryone$ and $\thU_2=\utheorytwo$ be ultrafilter theories and
  $\varphi:\V_1\to\V_2$ be a lax quantale morphism compatible with $\thU_1$ and
  $\thU_2$. Then, for every $\V_1$-category $X$, the identity map on the set
  $UX$ is a $\V_2$ functor of type
  \[
    UB_\varphi(X) \longrightarrow B_\varphi U(X).
  \]
\end{proposition}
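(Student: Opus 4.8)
The plan is to unwind the two $\V_2$-categorical structures carried by the set $UX$ and to verify the defining inequality of a $\V_2$-functor whose underlying map is the identity. Writing $X=(X,a_0)$, the $\V_2$-category $UB_\varphi(X)$ has structure $\Uxitwo(\varphi\cdot a_0)$, whereas $B_\varphi U(X)$ carries the structure $\varphi\cdot\Uxione a_0$. Since the underlying map is $1_{UX}$, being a $\V_2$-functor $UB_\varphi(X)\to B_\varphi U(X)$ amounts precisely to the pointwise inequality
\[
  \Uxitwo(\varphi\cdot a_0)(\fx,\fy)\le\varphi\bigl(\Uxione a_0(\fx,\fy)\bigr)
\]
for all $\fx,\fy\in UX$, so the whole argument reduces to establishing this estimate.

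First I would expand the left-hand side according to the definition of the lax extension, rewriting it as the supremum of the values $\xi_2\cdot U(\varphi\cdot a_0)(\fw)$ taken over all $\fw\in U(X\times X)$ with $U\pi_1(\fw)=\fx$ and $U\pi_2(\fw)=\fy$. By functoriality of $U\colon\SET\to\SET$ we have $U(\varphi\cdot a_0)=U\varphi\cdot Ua_0$, so each such term equals $\xi_2\cdot U\varphi(\fv)$ with $\fv:=Ua_0(\fw)\in U\V_1$.

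Next I would invoke the compatibility of $\varphi$ with $\thU_1$ and $\thU_2$, which gives $\xi_2\cdot U\varphi(\fv)\le\varphi(\xi_1(\fv))$ and hence $\xi_2\cdot U(\varphi\cdot a_0)(\fw)\le\varphi\bigl(\xi_1\cdot Ua_0(\fw)\bigr)$ for every admissible $\fw$. Since $\xi_1\cdot Ua_0(\fw)$ is one of the terms whose supremum defines $\Uxione a_0(\fx,\fy)$ (the projection constraints on $\fw$ being the same in both theories), monotonicity of $\varphi$ yields $\varphi\bigl(\xi_1\cdot Ua_0(\fw)\bigr)\le\varphi\bigl(\Uxione a_0(\fx,\fy)\bigr)$. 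Thus each term on the left is bounded by the single, $\fw$-independent value $\varphi\bigl(\Uxione a_0(\fx,\fy)\bigr)$, and taking the supremum over all admissible $\fw$ gives the required inequality.

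The only delicate point is that a lax quantale morphism $\varphi$ need not preserve suprema, so one cannot simply pull $\varphi$ out of the join defining the left-hand side. I avoid this by bounding each individual summand $\xi_2\cdot U(\varphi\cdot a_0)(\fw)$ by the $\fw$-independent quantity $\varphi\bigl(\Uxione a_0(\fx,\fy)\bigr)$ \emph{before} forming the supremum; with the estimates arranged in this order only the monotonicity of $\varphi$ and the single compatibility inequality are used, and no join-preservation or continuity property of $\varphi$ is needed.
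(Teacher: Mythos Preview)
Your argument is correct and follows essentially the same route as the paper: expand $\Uxitwo(\varphi a_0)(\fx,\fy)$ as a join over admissible $\fw$, apply the compatibility inequality $\xi_2\cdot U\varphi\le\varphi\cdot\xi_1$ termwise, and then use monotonicity of $\varphi$ to pass to $\varphi(\Uxione a_0(\fx,\fy))$. Your final remark about why only monotonicity (not join-preservation) of $\varphi$ is needed makes explicit exactly the step the paper records as $\bigvee\{\varphi\cdot\xi_1\cdot Ua_0(\fw)\mid\ldots\}\le\varphi\bigl(\bigvee\{\xi_1\cdot Ua_0(\fw)\mid\ldots\}\bigr)$.
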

\begin{proof}
  Let $(X,a_0)$ be a $\V_1$-category. Then, since $\varphi$ is compatible with
  the ultrafilter theories $\thU_1$ and $\thU_2$, for all $\fx,\fy\in UX$ we
  have
  \begin{align*}
    \Uxitwo(\varphi a_0)(\fx,\fy)
    &=\bigvee\{\xi_2\cdot U\varphi\cdot Ua_0(\fw)\mid U\pi_1(\fw)=\fx,U\pi_2(\fw)=\fy\}\\
    &\leq \bigvee\{ \varphi\cdot\xi_1 \cdot Ua_0(\fw)\mid U\pi_1(\fw)=\fx,U\pi_2(\fw)=\fy\}\\
    &\le\varphi\left( \bigvee\{\xi_1 \cdot Ua_0(\fw)\mid U\pi_1(\fw)=\fx,U\pi_2(\fw)=\fy\}\right)\\
    &=\varphi(\Uxione a_0)(\fx,\fy);
  \end{align*}
  which proves the claim.
\end{proof}

Hence, the family of these maps defines a natural transformation
\[
  \xymatrix{\Cats{\V_1}\ar[r]^{B_\varphi}\ar[d]_{U}\drtwocell<\omit> &
    \Cats{\V_2}\ar[d]^{U} \\
    \Cats{\V_1}\ar[r]_{B_\varphi} & \Cats{\V_2}}
\]
and, together with $B_\varphi:\Cats{\V_1}\to\Cats{\V_2}$, a monad morphism (see
\cite{Pum70}) from the ultrafilter monad $\mU$ on $\Cats{\V_1}$ to the
ultrafilter monad $\mU$ on $\Cats{\V_2}$. As a result, we obtain the functor
\[
  B_\varphi:\Cats{\V_1}^{\mU} \longrightarrow\Cats{\V_2}^\mU
\]
sending $(X,a_0,\alpha)$ to $(X,\varphi a_0,\alpha)$ and making the diagram
\[
  \xymatrix{\Cats{\V_1}^\mU\ar[r]^{B_\varphi}\ar[d]_{G^\mU} &
    \Cats{\V_2}^\mU\ar[d]^{G^\mU} \\ \Cats{\V_1}\ar[r]_{B_\varphi} &
    \Cats{\V_2}}
\]
commutative. In particular, for every completely distributive quantale $\V$ and
$\xi$ given by the formula in Theorem~\ref{thm:6}, the lax quantale morphism
$p:\V\to \two$ induces the functor
\[
  B_p:\Cats{\V}^{\mU} \longrightarrow \ORDCH.
\]

\begin{examples}
  We have seen in Example~\ref{exs:functors_Ucompatible} that the lax quantale
  morphisms introduced in Example~\ref{exs:quantale_morphs}
  (\ref{functors_in_delta}) are compatible with the ultrafilter theories
  $\thU_{\Pp}$ and $\thU_\ddf$. As a consequence one has the adjoint functors
  \[
    \PROBMETCH \adjunct{B_{I_{\infty}}}{B_{P_{\infty}}} \METCH.
  \]
\end{examples}

\subsection{$\thU$-categories and $\thU$-functors}
\label{sec:categ-funct}

We have already mentioned in Section~\ref{sec:introduction} that there is a
close connection between ordered compact Hausdorff spaces and certain
topological spaces. In this subsection we recall the definition of
$\thU$-categories as enriched substitutes of topological spaces. This notion has
its roots in Barr's ``relational algebras'' \cite{Bar70}, an extensive
presentation of the theory of $(\mT,\V)$-categories (also called
$(\mT,\V)$-algebras), for a monad $\mT$ and a quantale $\V$, can be found in
\cite{HST14}.

\begin{definition}\label{def:1}
  A \df{$\thU$-category} is a pair $(X,a)$ consisting of a set $X$ and a
  $\V$-relation $a:TX\relto X$ satisfying the lax Eilenberg--Moore axioms
  $1_X\le a\cdot e_X$ and $a\cdot\Uxi a\le a\cdot m_X$.
\end{definition}

Expressed elementwise, these two conditions read as
\begin{align*}
  k&\le a(e_X(x),x) &&\text{and}& \Uxi a(\fX,\fx)\otimes a(\fx,x)\le a(m_X(\fX),x),
\end{align*}
for all $\fX\in UUX$, $\fx\in UX$ and $x\in X$.
 
\begin{definition}\label{def:2}
  A function $f:X\to Y$ between $\thU$-categories $(X,a)$ and $(Y,b)$ is a
  \df{$\thU$-functor} whenever $f\cdot a\le b\cdot Uf$.
\end{definition}
Since $f\dashv f^\circ$ in $\Rels{\V}$, this condition is equivalent to
$a\le f^\circ\cdot b\cdot Uf$, and in pointwise notation the latter inequality
becomes
\[
  a(\fx,x)\le b(Uf(\fx),f(x)),
\]
for all $\fx\in UX$, $x\in X$. The category of $\thU$-categories and
$\thU$-functors is denoted by $\Cats{\thU}$.

\begin{examples}\label{exs:U-Cats}
  \begin{enumerate}
  \item For $\V=\two$, a $\thU_\two$-category is a set $X$ equipped with a
    relation $\to:UX\relto X$ such that $e_x(x)\to x$ and, if $\fX \to \fr$ and
    $\fr\to x$, then $m_X(\fX)\to x$. It is shown in \cite{Bar70} that these are
    precisely the convergence relations induced by topologies; in fact, the main
    result of \cite{Bar70} states that $\Cats{\thU_\two}$ is isomorphic to the
    category $\TOP$ of topological spaces and continuous maps.
  \item The concept of approach space was introduced by Lowen in 1989 (see
    \cite{Low89,Low97}). It involves a set $X$ and a map
    $\delta: PX\times X\to \zerinf$, called approach distance or distance map,
    satisfying:
    \begin{enumerate}
    \item $\delta(\{x\},x)=0$,
    \item $\delta (\varnothing, x)=\infty$,
    \item $\delta (A\cup B,x)=\min\{\delta(A,x),\delta(B,x)\}$,
    \item $\delta (A^{(\varepsilon)},x)+\varepsilon \geq \delta(A,x)$, with
      $A^{(\varepsilon)}=\{x\in X\ |\ \varepsilon\geq \delta(A,x)\}$,
    \end{enumerate}
    for all $x\in X$, all $A,B \in PX$ and all $\varepsilon \in \zerinf$. A
    non-expansive map is a map $f:X \to Y$ between approach spaces $(X,\delta)$
    and $(Y,\delta ')$ subject to $\delta (A,x)\geqslant \delta '(f(A),f(x))$,
    for all $A\in PX$ and all $x\in X$.
    
    It was proved in \cite{CH03} that a ${\Pp}$-relation $a:UX\relto X$ is
    induced by an approach distance $\delta :PX\times X\to \zerinf$ if and only
    if
    \begin{align*}
      0\geqslant a(\doo{x},x)
      && \text{and}
      && U_{\xi}a(\fX, \fr) + a(\fr,x) \geqslant a(m_X(\fX),x).
    \end{align*}
    for all $x\in X$, all $\fr \in UX$ and all $\fX\in UUX$, or equivalently, if
    and only if $(X,a)$ is a $\thU_{\Pp}$-category. Moreover, non-expansive maps
    correspond precisely to $\thU_{\Pp}$-functors, so that
    $\APP\simeq\Cats{\thU_{\Pp}}$.
  \item For $\V=\Pm \simeq \zerone_{\wedge}$, $\Cats{\thU_{\Pm}}$ can be
    identified with the subcategory of $\APP$ whose objects $(X,a)$ are the
    approach spaces satisfying additionally the condition
    \[
      \max (U_{\xi}a(\fX,\fr), a(\fr,x))\geqslant a(m_X (\fX),x),
    \]
    for all $\fX\in UUX$, $\fr\in UX$ and $x\in X$.
  \item For $\V=\zerone_{\odot}$, $\Cats{\thU_{[0,1]_\odot}}$ is the category
    whose objects are structures of the type $(X,a)$ with $a:UX\relto X$
    satisfying
    \begin{align*}
      1\leqslant  a(\doo{x},x)
      && \text{and}
      && U_{\xi}a(\fX, \fr) + a(\fr,x) \geqslant 1 \implies U_{\xi}a(\fX, \fr) + a(\fr,x) \leqslant a(m_X(\fX),x)+1.
    \end{align*}
  \item For $\V=\ddf$, $\thU$-categories can be identified with probabilistic
    approach spaces. This is an example of a \emph{quantale-valued approach
      space} studied in \cite{LT16_tmp}. For the sake of simplicity we will
    represent $\Cats{\thU_\ddf}$ by $\PROBAPP$.
  \end{enumerate}
\end{examples}

Similarly to the situation for $\V$-categories, the canonical forgetful functor
$\FgtSet{\thU}:\Cats{\thU}\to\SET$ is topological (see \cite{CH03}); which
implies that the category $\Cats{\thU}$ is complete and cocomplete and
$\FgtSet{\thU}$ preserves limits and colimits. We denote the free
$\thU$-category over the one-element set $1$ by $G=(1,k)$; here $k:U1\relto 1$
is the $\V$-relation which sends the unique element of $U1\times 1$ to $k$.

There are several other functors connecting $\thU$-categories with
$\V$-categories and topological spaces. Firstly, we have a functor
$\SET^\mU\hookrightarrow\Cats{\thU}$ interpreting an Eilenberg--Moore algebra as
a lax one. Furthermore, there is a forgetful functor
$(-)_0:\Cats{\thU}\to\Cats{\V}$ sending $(X,a)$ to $(X,a_0=a\cdot e_X)$ and
leaving maps unchanged. We notice that the diagram
\[
  \xymatrix{\SET^\mU\ar[r]\ar[d]_{G^\mU} & \Cats{\thU}\ar[d]^{(-)_0} \\
    \SET\ar[r]_-{\text{discrete}} & \Cats{\V}}
\]
commutes. Furthermore, by \cite[Section 4]{Hof07}, we have:

\begin{proposition}\label{d:prop:5}
  Assume that $\varphi:\V_1\to\V_2$ is is a lax quantale morphism compatible
  with the ultrafilter theories $\thU_1=\utheoryone$ and
  $\thU_2=\utheorytwo$. Then
  \begin{align*}
    (X,a)\longmapsto (X,\varphi a) &&\text{and}&& f \longmapsto f 
  \end{align*}
  define a functor $B_\varphi:\Cats{\thU_1}\to\Cats{\thU_2}$.
\end{proposition}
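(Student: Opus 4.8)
The plan is to verify the two defining inequalities of Definition~\ref{def:1} for the pair $(X,\varphi a)$ whenever $(X,a)$ is a $\thU_1$-category, to check that a $\thU_1$-functor remains a $\thU_2$-functor after applying $\varphi$, and to note that functoriality is immediate because $B_\varphi$ leaves underlying maps unchanged. Writing $k_1,k_2$ for the neutral elements of $\V_1,\V_2$ and recalling $(\varphi a)(\fx,x)=\varphi(a(\fx,x))$, I would first handle the unit axiom: from the reflexivity $k_1\le a(e_X(x),x)$ of $(X,a)$, applying the monotone map $\varphi$ and then the lax-unit inequality $k_2\le\varphi(k_1)$ gives $k_2\le(\varphi a)(e_X(x),x)$ for all $x\in X$.

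The essential ingredient for the transitivity axiom is the pointwise inequality $\Uxitwo(\varphi a)\le\varphi\cdot\Uxione a$, now applied to the $\V_1$-relation $a:UX\relto X$. This is exactly the computation established in the earlier proposition showing that the identity on $UX$ is a $\V_2$-functor $UB_\varphi(X)\to B_\varphi U(X)$: it relies only on the defining formula for the lax extension $\Uxi$, the functoriality identity $U(\varphi a)=U\varphi\cdot Ua$, the compatibility condition $\xi_2\cdot U\varphi\le\varphi\cdot\xi_1$, and monotonicity of $\varphi$ (yielding lax preservation of suprema). Since that argument nowhere uses that the relation in question is the structure of a $\V$-category, it transfers verbatim to $a:UX\relto X$.

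Granting this, the transitivity axiom would follow from the chain
\begin{align*}
  \Uxitwo(\varphi a)(\fX,\fx)\otimes(\varphi a)(\fx,x)
  &\le\varphi(\Uxione a(\fX,\fx))\otimes\varphi(a(\fx,x))\\
  &\le\varphi\bigl(\Uxione a(\fX,\fx)\otimes a(\fx,x)\bigr)\\
  &\le\varphi(a(m_X(\fX),x))=(\varphi a)(m_X(\fX),x),
\end{align*}
for all $\fX\in UUX$, $\fx\in UX$ and $x\in X$: the first step combines the inequality above with monotonicity of $\otimes$, the second is the lax multiplicativity $\varphi(u)\otimes\varphi(v)\le\varphi(u\otimes v)$, and the third uses the transitivity of $(X,a)$ together with monotonicity of $\varphi$. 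For the morphism part, I would apply the monotone map $\varphi$ to the defining inequality $a(\fx,x)\le b(Uf(\fx),f(x))$ of a $\thU_1$-functor $f$ to obtain $(\varphi a)(\fx,x)\le(\varphi b)(Uf(\fx),f(x))$, so that $f$ is a $\thU_2$-functor; preservation of identities and composites is then trivial.

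The main difficulty, such as it is, lies entirely in the second step: the lax extension furnishes only the \emph{inequality} $\Uxitwo(\varphi a)\le\varphi\cdot\Uxione a$ rather than an equality, so one must verify that it is oriented correctly to cooperate with the lax multiplicativity of $\otimes$ in the transitivity chain, which it does. Everything else is a routine application of monotonicity of $\varphi$.
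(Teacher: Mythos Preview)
Your proof is correct. The paper does not prove this proposition directly but only cites \cite[Section~4]{Hof07}; your argument is precisely the direct verification one would expect, and the key observation---that the computation establishing $\Uxitwo(\varphi a_0)\le\varphi\cdot\Uxione a_0$ in the preceding proposition uses nothing about $a_0$ beyond its being a $\V_1$-relation, and hence transfers to $a:UX\relto X$---is exactly right.
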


By Proposition~\ref{d:prop:5}, for every ultrafilter theory $\thU=\utheory$, the
canonical map $i:\two\to\V$ (see Subsection~\ref{ssec:quant-enrich-categ})
induces the functor
\[
  B_i:\TOP \longrightarrow \Cats{\thU}.
\]
interpreting a topological space $X$ as the $\thU$-category with structure
\[
  (\fx,x) \longmapsto \begin{cases}
    k & \text{if }\fx\to x\\
    \bot & \text{else,}
  \end{cases}
\]
for $\fx \in UX$ and $x\in X$. If, moreover, the right adjoint $p:\V\to\two$ of
$i$ is compatible with $\thU$ and $\thU_\two$ (see Lemmas~\ref{d:lem:3} and
\ref{d:lem:4}), then $p$ defines a functor
\[
  B_p:\Cats{\thU} \longrightarrow \TOP
\]
which is right adjoint to $B_i$. Here $B_p$ sends an $\thU$-category $(X,a)$ to
the topological space $X$ with convergence
\[
  UX\times X\xrightarrow{\quad a\quad}\V\xrightarrow{\quad p\quad}\two;
\]
that is, for $\fx \in UX$ and $x\in X$, $\fx\to x$ if and only if $k\le a(\fx,x)$.

\begin{examples} \label{functors for U-cats} The adjoint lax quantale morphisms
  $I_\infty \dashv P_\infty$ (see Example~\ref{exs:quantale_morphs}) are
  compatible with the ultrafilter theories $\thU_{\Pp}$ and
  $\thU_{\ddf}$. Therefore they induce the adjoint functors
  \[
    \PROBAPP \adjunct{B_{I_\infty}}{B_{P_\infty}} \APP.
  \]
\end{examples}

\subsection{Comparison with $\thU$-categories}

It is shown in \cite{Tho09} that there is a canonical functor
\[
  K:(\Cats{\V})^\mU \longrightarrow \Cats{\thU}
\]
which associates to each $X=(X,a_0,\alpha)$ in $(\Cats{\V})^\mU$ the
$\thU$-category $KX=(X,a)$ where $a=a_0\cdot\alpha$. Note that
$(a_0\cdot\alpha)_0=a_0$, that is, the diagram
\begin{equation}\label{d:eq:2}
  \xymatrix{(\Cats{\V})^\mU\ar[r]^-{K}\ar[dr]_{G^\mU} &
    \Cats{\thU}\ar[d]^{(-)_0}\\ & \Cats{\V}}
\end{equation}
commutes. Applying $K$ to $\V=(\V,\hom,\xi)$ produces the $\thU$-category
$\V=(\V,\hom_\xi)$ where
\[
  \hom_\xi:U\V\times\V \longrightarrow \V,\,(\fv,v)\longmapsto\hom(\xi(\fv),v).
\]

\begin{example}
  \begin{enumerate}
  \item For $\thU=\thU_\two$, one obtains the commutative diagram
    \[
      \xymatrix{\ORDCH\ar[r]^-{K}\ar[dr]_{G^\mU} & \TOP\ar[d]^{(-)_0}\\ & \ORD}
    \]
    Here every ordered compact Hausdorff space maps to a weakly sober, locally
    compact and stable topological space; assuming also the T0-axiom, these
    spaces are called \df{stably compact} (see \cite{GHK+03}). It is also shown
    in \cite{GHK+03} that the full subcategory of $\ORDCH$ defined by the
    separated orders is isomorphic to the category $\STCOMP$ of stably compact
    topological spaces and spectral maps. We also note that the space $K\two$ is the
    Sierpi\'nski space $\two=\{0,1\}$ with $\{1\}$ closed.
  \item When we consider the ultrafilter theory $\thU=\thU_{\Pp}$, the diagram
    \eqref{d:eq:2} becomes
    \[
      \xymatrix{\METCH\ar[r]^-{K}\ar[dr]_{G^\mU} & \APP\ar[d]^{(-)_0}\\ & \MET.}
    \]
    Here the space $K\Pp$ coincides with the ``Sierpi\'nski approach space'' of
    \cite[Example~1.8.33~(2)]{Low97}. Similarly to the topological case, it is
    shown in \cite{GH13} that separated metric compact Hausdorff spaces
    correspond precisely to stably compact approach spaces.
  \item For $\thU=\thU_{\ddf}$, we obtain the diagram
    \[
      \xymatrix{\PROBMETCH\ar[r]^-{K}\ar[dr]_{G^\mU} & \PROBAPP\ar[d]^{(-)_0}\\
        & \PROBMET.}
    \]
  \end{enumerate}
\end{example}

The functor $K:(\Cats{\V})^\mU\to\Cats{\thU}$ is right adjoint, its left adjoint
assigns to every $\thU$-category the $\V$-categorical compact Hausdorff space
$(UX,\hat{a},m_X)$. Regarding this construction, we recall here from
\cite{CH09}:

\begin{lemma}\label{lem:2}
  For every $\thU$-category $(X,a)$, $\hat{a}:=\Uxi a\cdot m_X^\circ$ is a
  $\V$-category structure on $UX$.
\end{lemma}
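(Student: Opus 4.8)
The plan is to verify directly that $\hat a = \Uxi a \cdot m_X^\circ$ satisfies the two defining axioms of a $\V$-category on $UX$, namely $1_{UX} \le \hat a$ (reflexivity) and $\hat a \cdot \hat a \le \hat a$ (transitivity). Throughout I will work inside the quantaloid $\Rels{\V}$ and rely on the calculus of $\V$-relations, the lax-functoriality properties of $\Uxi$ collected in Theorem~\ref{thm:1}, and the two $\thU$-category inequalities for $(X,a)$: $1_X \le a \cdot e_X$ and $a \cdot \Uxi a \le a \cdot m_X$. The idea is that all the work reduces to manipulating composites of $\V$-relations, using the fact that $m$ is a (strict) natural transformation $m : \Uxi\Uxi \to \Uxi$ and that maps $f$ satisfy $Uf = \Uxi f$ with $f \dashv f^\circ$.

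\textbf{Reflexivity.} First I would establish $1_{UX} \le \Uxi a \cdot m_X^\circ$. The natural starting point is the $\thU$-reflexivity axiom $1_X \le a \cdot e_X$; applying the lax functor $\Uxi$ and using that $\Uxi$ preserves the identity relation and is lax-functorial on composites (Theorem~\ref{thm:1}(4)) should yield $1_{UX} \le \Uxi(a \cdot e_X) \ge \Uxi a \cdot \Uxi e_X = \Uxi a \cdot U e_X$, where I use $\Uxi e_X = U e_X$ from Theorem~\ref{thm:1}(2). The key then is to relate $U e_X$ and $m_X^\circ$: since $m_X \cdot U e_X = 1_{UX}$ is one of the monad unit laws, passing to adjoints in $\Rels{\V}$ (using $U e_X \dashv (U e_X)^\circ$ and $m_X \dashv m_X^\circ$) gives $U e_X \le m_X^\circ$, whence $\Uxi a \cdot U e_X \le \Uxi a \cdot m_X^\circ = \hat a$. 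Chaining these gives $1_{UX} \le \hat a$.

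\textbf{Transitivity.} For $\hat a \cdot \hat a \le \hat a$ I would compute the composite
\[
  \hat a \cdot \hat a = \Uxi a \cdot m_X^\circ \cdot \Uxi a \cdot m_X^\circ .
\]
The crucial manoeuvre is to push $m_X^\circ$ past $\Uxi a$ using the naturality of $m$ from Theorem~\ref{thm:1}(5): the equality $m_Y \cdot \Uxi\Uxi r = \Uxi r \cdot m_X$ applied with $r = a$ (and the involution identity $\Uxi(r^\circ) = (\Uxi r)^\circ$) should produce, after taking opposites, a relation of the form $m_X^\circ \cdot \Uxi a = \Uxi\Uxi a \cdot m_X^\circ$ at the appropriate level. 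Substituting this converts the middle of the composite into $\Uxi(\Uxi a)$ precomposed with an iterated multiplication, and then lax-functoriality ($\Uxi a \cdot \Uxi\Uxi a \le \Uxi(a \cdot \Uxi a)$, Theorem~\ref{thm:1}(4)) together with the $\thU$-transitivity axiom $a \cdot \Uxi a \le a \cdot m_X$ (applied under $\Uxi$) should collapse everything down to $\Uxi(a \cdot m_X) = \Uxi a \cdot U m_X$, while the associativity law $m_X \cdot U m_X = m_X \cdot m_{UX}$ handles the remaining multiplications, finally leaving $\Uxi a \cdot m_X^\circ = \hat a$.

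\textbf{Main obstacle.} The delicate point will be the bookkeeping of the multiplication maps and their opposites: getting $m_X^\circ$ to commute correctly past $\Uxi a$ requires the \emph{strict} naturality of $m$ in Theorem~\ref{thm:1}(5) (an equality, not merely an inequality), and one must be careful that the indices on the iterated $\Uxi$'s and on $m$ at the two levels $UX$ and $UUX$ match up. The associativity identity $m_X \cdot Um_X = m_X \cdot m_{UX}$, valid as an equality of \emph{functions} and hence of $\V$-relations, is what ultimately allows the two stray copies of $m_X^\circ$ to be reabsorbed into a single one. Care is also needed because $\Uxi$ is only a \emph{lax} functor, so every use of $\Uxi s \cdot \Uxi r \le \Uxi(s \cdot r)$ runs in one direction only; fortunately all such uses point the right way for proving the transitivity inequality. (Were $\thU$ strict, these would be equalities and the computation would be cleaner, but laxness suffices here.)
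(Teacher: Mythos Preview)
Your approach is correct and is the standard argument; the paper itself does not give a proof but simply cites \cite{CH09} for this lemma. One small repair is needed in the reflexivity step: the chain you write, $1_{UX}\le\Uxi(a\cdot e_X)\ge\Uxi a\cdot Ue_X$, does not compose, since the two inequalities point in opposite directions. The fix is to invoke Theorem~\ref{thm:1}(3) rather than~(4): because $e_X$ is a \emph{map}, one has the equality $\Uxi(a\cdot e_X)=\Uxi a\cdot Ue_X$, and then $1_{UX}\le\Uxi a\cdot Ue_X\le\Uxi a\cdot m_X^\circ$ follows as you intend. Your transitivity argument is sound: taking opposites in the naturality equality for $m$ and using $\Uxi(r^\circ)=(\Uxi r)^\circ$ does yield the equality $m_X^\circ\cdot\Uxi a=\Uxi\Uxi a\cdot m_{UX}^\circ$ (note the subscript $UX$ on the right-hand multiplication), after which lax functoriality, the transitivity axiom under $\Uxi$ together with Theorem~\ref{thm:1}(3), and the associativity $(Um_X)^\circ\cdot m_X^\circ=m_{UX}^\circ\cdot m_X^\circ$ collapse the composite to $\hat a$ exactly as you describe.
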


We give now an alternative characterisation of the compatibility between the
convergence and the $\V$-categorical structure of an Eilenberg--Moore algebra
$(X,a_0,\alpha)$ in $\Cats{\V}^\mU$, which resembles the classical condition
stating that ``the order relation is closed in the product space''.

\begin{proposition}\label{prop:1}
  For a $\V$-category $(X,a_0)$ and a $\mU$-algebra $(X,\alpha)$ with the same
  underlying set $X$, the following assertions are equivalent.
  \begin{tfae}
  \item $\alpha:U(X,a_0)\to(X,a_0)$ is a $\V$-functor.
  \item $a_0:(X,\alpha)\times(X,\alpha)\to(\V,\hom_\xi)$ is an $\thU$-functor.
  \end{tfae}
\end{proposition}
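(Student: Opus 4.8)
The plan is to unwind both assertions into a single family of inequalities indexed by $\fw\in U(X\times X)$ and observe that these families coincide. The first task is therefore to pin down the $\thU$-category structures occurring in \textit{(ii)}. Under the embedding $\SET^\mU\hookrightarrow\Cats{\thU}$, the $\mU$-algebra $(X,\alpha)$ becomes the $\thU$-category whose structure is the $\V$-relation $\alpha:UX\relto X$ associated with the map $\alpha$, that is, $\alpha(\fx,x)=k$ if $\alpha(\fx)=x$ and $\alpha(\fx,x)=\bot$ otherwise. The underlying $\mU$-algebra of the product $(X,\alpha)\times(X,\alpha)$ has structure map $\fw\mapsto(\alpha(U\pi_1\fw),\alpha(U\pi_2\fw))$, and its $\thU$-category structure $c$ is computed using binary infima; since all values involved lie in $\{k,\bot\}$ and $k\wedge k=k$, this yields
\[
  c(\fw,(x,y))=
  \begin{cases}
    k & \text{if } \alpha(U\pi_1\fw)=x \text{ and } \alpha(U\pi_2\fw)=y,\\
    \bot & \text{otherwise.}
  \end{cases}
\]

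Next I would expand condition \textit{(ii)}. By Definition~\ref{def:2} and the formula $\hom_\xi(\fv,v)=\hom(\xi(\fv),v)$, the map $a_0$ is a $\thU$-functor precisely when $c(\fw,(x,y))\le\hom(\xi(Ua_0(\fw)),a_0(x,y))$ for all $\fw\in U(X\times X)$ and $(x,y)\in X\times X$. The left-hand side is $\bot$ unless $(x,y)=(\alpha(U\pi_1\fw),\alpha(U\pi_2\fw))$, in which case it equals $k$; and by the defining adjunction of $\hom$ together with $k$ being the unit of $\otimes$, the inequality $k\le\hom(u,w)$ is equivalent to $u\le w$. Hence \textit{(ii)} is equivalent to
\[
  \xi(Ua_0(\fw))\le a_0\bigl(\alpha(U\pi_1\fw),\alpha(U\pi_2\fw)\bigr)
  \qquad\text{for all } \fw\in U(X\times X).
\]

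Then I would expand condition \textit{(i)}. The map $\alpha:U(X,a_0)\to(X,a_0)$ is a $\V$-functor iff $\Uxi a_0(\fx,\fy)\le a_0(\alpha(\fx),\alpha(\fy))$ for all $\fx,\fy\in UX$. Substituting the definition of $\Uxi a_0(\fx,\fy)$ as the supremum of the values $\xi(Ua_0(\fw))$ over all $\fw$ with $U\pi_1\fw=\fx$ and $U\pi_2\fw=\fy$, and using that a supremum lies below a bound exactly when each of its members does, this reads: for every such $\fw$, $\xi(Ua_0(\fw))\le a_0(\alpha(U\pi_1\fw),\alpha(U\pi_2\fw))$. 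Since each $\fw\in U(X\times X)$ lies in a unique fibre $(\fx,\fy)=(U\pi_1\fw,U\pi_2\fw)$ and the right-hand side depends only on this fibre, quantifying over fibres and then over members is the same as quantifying over all $\fw$. Thus \textit{(i)} reduces to the same family of inequalities as \textit{(ii)}, and the equivalence follows.

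The main obstacle is the bookkeeping of the first paragraph: correctly identifying the $\thU$-category structure on the product $(X,\alpha)\times(X,\alpha)$ and verifying that the binary-infimum product structure collapses to the graph relation of the product algebra --- which hinges precisely on all the relevant values being $k$ or $\bot$. Once these structures are fixed, the remainder is a routine application of the $\hom$-adjunction and of the universal property of suprema.
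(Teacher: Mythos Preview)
Your proof is correct and follows essentially the same route as the paper's: both arguments unwind (i) and (ii) into the single family of inequalities $\xi(Ua_0(\fw))\le a_0(\alpha(U\pi_1\fw),\alpha(U\pi_2\fw))$ indexed by $\fw\in U(X\times X)$, using the $\hom$-adjunction for one direction and the definition of $\Uxi a_0$ as a supremum for the other. The only difference is cosmetic: you spell out the $\thU$-category structure on the product $(X,\alpha)\times(X,\alpha)$ explicitly, whereas the paper jumps directly to the translated condition.
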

\begin{proof}
  The first assertions is equivalent to
  \[
    \forall\fx,\fy\in UX\,.\,\Uxi a_0(\fx,\fy)\le a_0(\alpha(\fx),\alpha(\fy)),
  \]
  and, since
  $\Uxi a_0(\fx,\fy)=\bigvee\{\xi\cdot Ua_0(\fw)\mid\fw\in U(X\times
  X),U\pi_1(\fw)=\fx,U\pi_2(\fw)=\fy\}$, this is equivalent to
  \[
    \forall\fx,\fy\in UX,\forall\fw\in U(X\times X)\,.\,
    ((U\pi_1(\fw)=\fx\,\&\,U\pi_2(\fw)=\fy) \implies(\xi\cdot Ua_0(\fw)\le
    a_0(\alpha(\fx),\alpha(\fy)))).
  \]
  On the other hand, the second statement translates to
  \[
    \forall\fw\in U(X\times X), \forall\fx,\fy\in UX\,.\,
    ((U\pi_1(\fw)=\fx\,\&\,U\pi_2(\fw)=\fy) \implies (k\le\hom(\xi\cdot
    Ua_0(\fw), a_0(\alpha(\fx),\alpha(\fy))))),
  \]
  which proves the equivalence.
\end{proof}

From Proposition~\ref{prop:1} we conclude immediately:

\begin{lemma}
  Let $\thU$ be an ultrafilter theory so that $p:\V\to\two$ is compatible with
  $\thU$ and $\thU_\two$, and $(X,a_0,\alpha)$ be a $\V$-categorical compact
  Hausdorff space. Then, for all $x\in X$ and $u\in\V$, the closed balls
  \begin{align*}
    \{y\in X\mid a_0(x,y)\ge u\}
    &&\text{and}
    &&\{y\in X\mid a_0(y,x)\ge u\}
  \end{align*}
  with center $x$ and radius $u$ are closed with respect to the compact
  Hausdorff topology.
\end{lemma}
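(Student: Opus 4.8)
The plan is to show that the closed ball $B = \{y \in X \mid a_0(x,y) \ge u\}$ is topologically closed by verifying that it is stable under ultrafilter convergence; that is, whenever $\fy \in UB$ and $\alpha(\fy) = z$, then $z \in B$. Since $(X,\alpha)$ is a compact Hausdorff space, a subset is closed precisely when it contains the $\alpha$-limit of every ultrafilter supported on it, so this suffices. The key observation is that the fixed center $x$ and the radius $u$ should be encoded through a constant ultrafilter and the lax functoriality of $\Uxi a_0$, and then combined with the $\V$-functoriality of $\alpha$.

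First I would consider the principal ultrafilter $\doo{x} \in UX$ together with $\fy \in UX$, and analyse $\Uxi a_0(\doo{x}, \fy)$. Using Theorem~\ref{thm:1}(2), which gives $\Uxi(e_X) = Ue_X$ and relates $\Uxi$ of a function to $U$ of it, one sees that $\Uxi a_0(\doo{x},\fy)$ can be computed as $\xi \cdot U(a_0(x,-))(\fy)$, the transported convergence of the single-variable $\V$-relation $a_0(x,-): X \to \V$. Concretely, the unique $\fw \in U(X \times X)$ with $U\pi_1(\fw) = \doo{x}$ is determined by $\fy$ via the section $y \mapsto (x,y)$, so the supremum in the definition of $\Uxi a_0$ collapses to a single term. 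Next I would use that $\fy$ is supported on $B$, i.e.\ $B \in \fy$, so that on the set $B$ the map $a_0(x,-)$ takes values in $\upc u = \{v \in \V \mid v \ge u\}$. Since $\xi$ is the convergence of a compact Hausdorff topology on $\V$ and $\upc u$ is closed in $\V$ (this is exactly the kind of closed ball in $\V$ itself, or can be read off from the formula for $\xi$ in Theorem~\ref{thm:6} together with the fact that $\upc u$ is closed under the relevant infima and suprema), it follows that $\xi \cdot U(a_0(x,-))(\fy) \ge u$, hence $\Uxi a_0(\doo{x}, \fy) \ge u$.

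Finally I would invoke the $\V$-functoriality of $\alpha$: by Proposition~\ref{prop:1}, $\Uxi a_0(\doo{x},\fy) \le a_0(\alpha(\doo{x}), \alpha(\fy))$. Since $\alpha(\doo{x}) = \alpha \cdot e_X(x) = x$ by the unit law for the $\mU$-algebra, and $\alpha(\fy) = z$, we conclude $a_0(x,z) \ge \Uxi a_0(\doo{x},\fy) \ge u$, so $z \in B$, as desired. The second ball, $\{y \mid a_0(y,x) \ge u\}$, is handled identically after replacing $a_0$ by $a_0^\circ$; recall that $(X, a_0^\circ, \alpha)$ is again a $\V$-categorical compact Hausdorff space because $\Uxi$ commutes with $(-)^\circ$ (Theorem~\ref{thm:1}(1)), so the same argument applies verbatim.

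The step I expect to be the main obstacle is establishing that $\xi \cdot U(a_0(x,-))(\fy) \ge u$ whenever $a_0(x,-)$ maps $\fy$-almost-all points into $\upc u$ --- in other words, that the up-set $\upc u$ is closed with respect to the compact Hausdorff topology on $\V$ whose convergence is $\xi$. For the general ultrafilter theory this is a genuine compatibility requirement between $\xi$ and the order of $\V$; under the hypothesis that $p:\V \to \two$ is compatible with $\thU$ and $\thU_\two$ it should follow cleanly, since that compatibility says precisely that $\{1\}$ pulls back along $p$ to a closed set, and $\upc u$ is the preimage of $\upc k$ under the (continuous, by the pointwise structure of $\xi$) translation-type comparison with $u$. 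Making this last reduction precise --- identifying $\upc u$ as a closed subset of $(\V, \xi)$ --- is where the real content lies; everything else is bookkeeping with the identities in Theorem~\ref{thm:1}.
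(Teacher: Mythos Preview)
Your argument is correct and can be completed; it takes a more hands-on route than the paper's. The obstacle you isolate---that $\upc u$ is closed in the compact Hausdorff space $(\V,\xi)$---does follow from the hypothesis, essentially as you sketch. Compatibility of $p$ gives closedness of $\upc k$: if $\upc k\in\fw$ then $Up(\fw)=\doo{1}$, so $1=\xi_\two(Up(\fw))\le p(\xi(\fw))$, i.e.\ $\xi(\fw)\ge k$. For general $u$, your ``translation-type comparison'' is $\hom(u,-)$, but note that it is \emph{not} continuous $(\V,\xi)\to(\V,\xi)$ in general; what you do have is the one-sided inequality coming from $\xi:U(\V,\hom)\to(\V,\hom)$ being a $\V$-functor, namely $\xi\cdot U(\hom(u,-))(\fw)=\Uxi\hom(\doo{u},\fw)\le\hom(u,\xi(\fw))$. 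If $\upc u\in\fw$ then $\upc k\in U(\hom(u,-))(\fw)$, so the left-hand side is $\ge k$ by the previous step, whence $\xi(\fw)\ge u$. With this in place your computation goes through verbatim.

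The paper's proof sidesteps this verification. By Proposition~\ref{prop:1}, the map $a_0:(X,\alpha)\times(X,\alpha)\to(\V,\hom_\xi)$ is a $\thU$-functor; applying the functor $B_p:\Cats{\thU}\to\TOP$ (whose existence is exactly the hypothesis that $p$ is compatible) yields a continuous map into the topological space $B_p(\V,\hom_\xi)$. In that topology $\upc u$ is closed for free, since it is the closure of the singleton $\{u\}$ (because $\doo{u}\to v$ there iff $u\le v$). The closed balls are then preimages of $\upc u$ under $a_0(x,-)$ and $a_0(-,x)$. So the paper trades your elementwise check for the functoriality of $B_p$ and the trivial observation about point-closures; your approach is more explicit but has to earn the closedness of $\upc u$ directly.
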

\begin{proof}
  Applying the forgetful functor $B_p:\Cats{\thU}\to\TOP$, we obtain that
  $a_0:(X,\alpha)\times(X,\alpha)\to B_p(\V,\hom_\xi)$ is continuous. For every
  $x\in X$, consider the continuous maps
  \begin{align*}
    X\xrightarrow{\,\langle 1_X,x\,\rangle}X\times X\xrightarrow{\,a_0\,}\V
    &&\text{and}
    && X\xrightarrow{\,\langle x,1_X\,\rangle}X\times X\xrightarrow{\,a_0\,}\V.
  \end{align*}
  Then, for every $u\in\V$, the closed balls with center $x$ and radius $u$ are
  the preimages of the closed set $\upc u$.
\end{proof}

\subsection{Convergences from $\V$-categories}
\label{sec:convergences-from-v}

We recall from Corollary~\ref{d:cor:4} that, for every quantale $\V=\quantale$
where $k$ is $\vee$-irreducible, we have the functor
\[
  T_\V:\Cats{\V}\longrightarrow\TOP
\]
sending a $\V$-category $(X,a_0)$ to $X$ equipped with the L-closure of
$(X,a_0)$.  We investigate now connections between $\V$-categorical and
topological properties. Note that, if $k$ is terminal in the quantale $\V$, then
the projection maps $\pi_1:X\times Y\to X$ and $\pi_2:X\times Y\to Y$ are
$\V$-functors
\begin{align*}
  \pi_1:(X,a_0)\otimes (Y,b_0) \longrightarrow (X,a_0)
  &&\text{and}
  && \pi_2:(X,a_0)\otimes (Y,b_0) \longrightarrow (Y,b_0),
\end{align*}
for all $\V$-categories $(X,a_0)$ and $(Y,b_0)$. Therefore, with the same proof
as for \cite[Corollary 5.8]{HT10}, we obtain:

\begin{lemma}\label{d:lem:2}
  Let $\V$ be a quantale where $k$ is terminal and $(X,a_0)$ be
  $\V$-category. For all $x,y\in X$,
  \[
    x\simeq y\iff (x,y)\in\overline{\Delta}\text{ in $(X,a_0)\otimes(X,a_0)$.}
  \]
  Hence, $(X,a_0)$ is separated if and only if $\Delta$ is closed in
  $(X,a_0)\otimes(X,a_0)$.
\end{lemma}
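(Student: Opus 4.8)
The plan is to reduce membership in the closure to the explicit join formula of Proposition~\ref{prop:4} and then read off both implications by elementary tensor manipulations, the role of ``$k$ terminal'' entering exactly through the fact (noted just before the statement) that the projections out of the $\otimes$-product are $\V$-functors. Recall that $x\simeq y$ means $k\le a_0(x,y)$ and $k\le a_0(y,x)$, i.e.\ $x$ and $y$ are equivalent in the underlying order. Write $d=a_0\otimes a_0$ for the structure of $P:=(X,a_0)\otimes(X,a_0)$, so $d((x,y),(x',y'))=a_0(x,x')\otimes a_0(y,y')$. First I would apply Proposition~\ref{prop:4} to the $\V$-category $P$ and the subset $\Delta$: a point $(x,y)$ lies in $\overline{\Delta}$ if and only if
\[
  k\le\bigvee_{z\in X} d((x,y),(z,z))\otimes d((z,z),(x,y))
   =\bigvee_{z\in X} a_0(x,z)\otimes a_0(y,z)\otimes a_0(z,x)\otimes a_0(z,y).
\]
Everything then reduces to comparing this join with $a_0(x,y)$ and with $a_0(y,x)$.

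For the implication $(x,y)\in\overline{\Delta}\Rightarrow x\simeq y$, I would bound a single summand. Since $\pi_1,\pi_2:P\to(X,a_0)$ are $\V$-functors (this is exactly where terminality of $k$ is used), we have $d((x,y),(z,z))\le a_0(x,z)$ and $d((z,z),(x,y))\le a_0(z,y)$, whence
\[
  a_0(x,z)\otimes a_0(y,z)\otimes a_0(z,x)\otimes a_0(z,y)\le a_0(x,z)\otimes a_0(z,y)\le a_0(x,y)
\]
by transitivity of $a_0$; taking the join over $z$ gives $k\le a_0(x,y)$. Reading the two projections the other way round, $d((x,y),(z,z))\le a_0(y,z)$ and $d((z,z),(x,y))\le a_0(z,x)$, so the same summand is $\le a_0(y,z)\otimes a_0(z,x)\le a_0(y,x)$, and the join yields $k\le a_0(y,x)$; hence $x\simeq y$. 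The converse is immediate: if $k\le a_0(x,y)$ and $k\le a_0(y,x)$, then evaluating the summand at $z=x$ and using reflexivity $k\le a_0(x,x)$ shows it dominates $k\otimes k\otimes k\otimes k=k$, so the join is $\ge k$ and $(x,y)\in\overline{\Delta}$.

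Finally, for the ``hence'' clause I would note that $\Delta\subseteq\overline{\Delta}$ always holds, so $\Delta$ is closed iff $\overline{\Delta}\subseteq\Delta$, i.e.\ iff $(x,y)\in\overline{\Delta}$ forces $x=y$; by the equivalence just proved this says exactly that $x\simeq y$ implies $x=y$, which is the definition of $(X,a_0)$ being separated. The only point that needs care is the forward direction: one must discard the ``wrong'' two tensor factors in each of the two estimates, and this is legitimate precisely because $k$ is the top element (equivalently, because $\pi_1$ and $\pi_2$ are $\V$-functors). Without terminality of $k$ the summand cannot in general be collapsed to a transitivity term and the characterisation of $\overline{\Delta}$ breaks down, so I expect this discarding step to be the crux of the argument.
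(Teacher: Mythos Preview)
Your proof is correct and matches the paper's intended approach: the paper does not spell out the argument but simply states that ``with the same proof as for \cite[Corollary 5.8]{HT10}'' the result follows, after noting (exactly as you do) that terminality of $k$ makes the projections $\pi_1,\pi_2$ out of the $\otimes$-product $\V$-functors. Your write-up is precisely the expected unpacking of that reference via Proposition~\ref{prop:4}.
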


In the sequel we will often require that the functor $T_\V$ is monoidal.
\begin{definition}
  The functor $T_\V:\Cats{\V}\to\TOP$ is \df{monoidal} if, for all
  $\V$-categories $(X,a_0)$ and $(Y,b_0)$, the identity map on $X\times Y$ is
  continuous of type
  \[
    T_\V(X,a_0)\times T_\V(Y,b_0) \longrightarrow T_\V((X,a_0)\otimes(Y,b_0)).
  \]
\end{definition}

\begin{proposition}\label{d:prop:3}
  Let $\V=\quantale$ be a completely distributive quantale where $k$ is
  $\vee$-irreducible and terminal in $\V$. We consider the ultrafilter theory
  $\thU=\utheory$ where $\xi:U\V\to\V$ is as in Theorem~\ref{thm:6}. Then the
  following assertions hold.
  \begin{enumerate}
  \item\label{d:item:1} The identity map on $\V$ is continuous of type
    $T_\V(\V,\hom)\to B_p(\V,\hom_\xi)$.
  \item\label{d:item:2} Assume that $T_\V$ is monoidal. Then, for every
    $\V$-category $(X,a_0)$, the topological space $T_\V(X,a_0)$ is Hausdorff if
    and only if $(X,a_0)$ is separated.
  \end{enumerate}
\end{proposition}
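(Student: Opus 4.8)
The plan is to translate both continuity statements into conditions on ultrafilter convergence, which is legitimate because $T_\V(\V,\hom)$ and $B_p(\V,\hom_\xi)$ are genuine topological spaces: the L-closure is topological since $k$ is $\vee$-irreducible, by Corollary~\ref{d:cor:4}. For assertion~(\ref{d:item:1}), recall that a map of topological spaces is continuous exactly when it preserves ultrafilter convergence; as the map is the identity, it suffices to show that every ultrafilter $\fv$ on $\V$ converging to $v$ in $T_\V(\V,\hom)$ also converges to $v$ in $B_p(\V,\hom_\xi)$. On one side, $\fv\to v$ in $B_p(\V,\hom_\xi)$ means $k\le\hom(\xi(\fv),v)$, i.e. $\xi(\fv)\le v$. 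On the other side, $\fv\to v$ in the L-closure topology means $v$ lies in the L-closure of every $A\in\fv$, which by Proposition~\ref{prop:4} reads $k\le\bigvee_{u\in A}\hom(v,u)\otimes\hom(u,v)$ for all $A\in\fv$. Thus assertion~(\ref{d:item:1}) reduces to the implication
\[
  \Bigl(\forall A\in\fv\,.\,k\le\bigvee_{u\in A}\hom(v,u)\otimes\hom(u,v)\Bigr)\implies\xi(\fv)\le v.
\]

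The heart of the matter, and the step I expect to be the main obstacle, is to bridge these two expressions using complete distributivity and the totally below relation $\ll$. I would use the self-dual formula $\xi(\fv)=\bigvee_{A\in\fv}\bigwedge_{u\in A}u$ from Theorem~\ref{thm:6}, noting that the family $(\bigwedge_{u\in A}u)_{A\in\fv}$ is directed because $\fv$ is a filter. Fix $w\ll\xi(\fv)$; since the join defining $\xi(\fv)$ is directed, $w\le\bigwedge_{u\in A}u$ for some $A\in\fv$, that is, $w\le u$ for all $u\in A$. As $k=\top$ gives $\hom(v,u)\le k$ and $\hom$ is antitone in its first variable, for each such $u$ we obtain $\hom(v,u)\otimes\hom(u,v)\le\hom(u,v)\le\hom(w,v)$, whence $\bigvee_{u\in A}\hom(v,u)\otimes\hom(u,v)\le\hom(w,v)$. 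The convergence hypothesis for this $A$ then forces $k\le\hom(w,v)$, i.e. $w\le v$. Since $\V$ is completely distributive, $\xi(\fv)=\bigvee\{w\mid w\ll\xi(\fv)\}\le v$, as required.

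For assertion~(\ref{d:item:2}) I would prove the two implications separately, the delicate point being which of them actually uses monoidality. The direction ``$T_\V(X,a_0)$ Hausdorff $\Rightarrow$ $(X,a_0)$ separated'' needs no monoidality: a Hausdorff space is $T_1$, so singletons are closed in $T_\V(X,a_0)$, while a direct computation with Proposition~\ref{prop:4}, again splitting the tensor by means of $k=\top$, shows that $x\in\overline{\{y\}}$ iff $k\le a_0(x,y)\otimes a_0(y,x)$ iff $x\simeq y$; hence $x\simeq y$ forces $x\in\overline{\{y\}}=\{y\}$, so $x=y$. For the converse ``separated $\Rightarrow$ Hausdorff'', separatedness means, by Lemma~\ref{d:lem:2}, that $\Delta$ is closed in $T_\V((X,a_0)\otimes(X,a_0))$, and monoidality provides the continuous identity
\[
  T_\V(X,a_0)\times T_\V(X,a_0)\longrightarrow T_\V((X,a_0)\otimes(X,a_0)),
\]
so $\Delta$, being the preimage of itself, is closed in $T_\V(X,a_0)\times T_\V(X,a_0)$; closedness of the diagonal in the square is precisely Hausdorffness of $T_\V(X,a_0)$. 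Thus monoidality is exactly what powers the separated-to-Hausdorff direction, whereas the reverse is elementary.
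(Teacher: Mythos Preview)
Your proposal is correct. For part~(\ref{d:item:1}) you and the paper reach the same reformulation, but diverge in the final step: the paper argues directly that $\bigwedge_{z\in A}z\le v$ for every $A\in\fv$ via the $\hom$-calculus, namely
\[
  v=\hom(k,v)\ge\hom\Bigl(\bigvee_{z\in A}\hom(v,z)\otimes\hom(z,v),\,v\Bigr)
  =\bigwedge_{z\in A}\hom(\hom(v,z)\otimes\hom(z,v),v)\ge\bigwedge_{z\in A}z,
\]
the last inequality coming from $z\otimes\hom(v,z)\otimes\hom(z,v)\le v$ (using $k=\top$). Your route instead passes through the totally below relation and the self-dual formula for $\xi$, which is equally valid and makes the use of complete distributivity more visible; the paper's computation is shorter but hides that dependence inside the $\hom$ identities.

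For part~(\ref{d:item:2}) the paper gives only a one-line proof: Lemma~\ref{d:lem:2} identifies separatedness with closedness of $\Delta$ in $T_\V((X,a_0)\otimes(X,a_0))$, and monoidality together with the continuity of the projections (available since $k$ is terminal) lets one transfer closedness of $\Delta$ back and forth between $T_\V(X\otimes X)$ and $T_\V(X)\times T_\V(X)$. Your treatment is more explicit and, in addition, replaces the Hausdorff-implies-separated direction by an independent elementary argument through $T_1$ and the computation $x\in\overline{\{y\}}\iff x\simeq y$; this cleanly isolates that only the separated-implies-Hausdorff direction requires the monoidality hypothesis, a point the paper's compressed proof leaves implicit.
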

\begin{proof}
  To see (\ref{d:item:1}), we note that an ultrafilter $\fx$ converges to
  $x$ in $T_\V(\V,\hom)$ if and only if, for all $A\in\fx$,
  $x\in\overline{A}$; that is,
  \[
    k\leq \bigvee_{z\in A}\hom(x,z)\otimes\hom(z,x).
  \]
  On the other hand, $\fx\to x$ in $B_p(\V,\hom_\xi)$ is equivalent to
  \[
    \forall A\in\fx\,.\,(\bigwedge A\leq x).
  \]
  Assume $\fx\to x$ in $T_\V(\V,\hom)$. For every $A\in\fx$, we calculate
  \[
    x=\hom(k,x)\geq\hom(\bigvee_{z\in A}\hom(x,z)\otimes\hom(z,x),x)
    =\bigwedge_{z\in A}\hom(\hom(x,z)\otimes\hom(z,x),x).
  \]
  Since $k$ is terminal in $\V$,
  \[
    z\otimes\hom(x,z)\otimes\hom(z,x)\le x\otimes \hom(x,z)\le x;
  \]
  we obtain
  \[
    \bigwedge_{z\in A}z\leq\bigwedge_{z\in A}\hom(\hom(x,z)\otimes\hom(z,x),x)
    \leq x.
  \]
  Regarding (\ref{d:item:2}), by Lemma~\ref{d:lem:2}, a $\V$-category $(X,a_0)$
  is separated if and only
  \[
    \Delta_X\subseteq X\times X
  \]
  is closed in $T_\V((X,a_0)\otimes(X,a_0))$. Hence, since $T_\V$ is monoidal,
  the assertion follows.
\end{proof}

\begin{definition}
  Let $\V=\quantale$ be a quantale where $k$ is $\vee$-irreducible. A
  $\V$-category $X$ is called \df{compact} whenever the topological space
  $T_\V(X)$ is compact. The full subcategory of $\Cats{\V}$ defined by all
  compact separated $\V$-categories is denoted by $\Cats{\V}_{\sep,\comp}$.
\end{definition}

\begin{theorem}
  Let $\V$ be a completely distributive quantale where $k$ is $\vee$-irreducible
  and terminal in $\V$ and assume that $T_\V:\Cats{\V}\to\TOP$ is monoidal. Let
  $\thU=\utheory$ be the ultrafilter theory with $\xi:U\V\to\V$ as in
  Theorem~\ref{thm:6}. Then the functor
  $T_\V:\Cats{\V}_{\sep,\comp}\to\COMPHAUS$ lifts to a functor
  $T_\thU:\Cats{\V}_{\sep,\comp}\to(\Cats{\V})^\mU$ which commutes with the
  canonical forgetful functors
  \[
    \xymatrix{ & \Cats{\V}\\
      \Cats{\V}_{\sep,\comp}\ar[r]^-{T_\thU}\ar[dr]_{T_\V}\ar[ur] & (\Cats{\V})^\mU\ar[u]\ar[d]\\
      & \COMPHAUS}
  \]
  to $\Cats{\V}$ and $\COMPHAUS$.
\end{theorem}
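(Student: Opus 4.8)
The plan is, given a compact separated $\V$-category $(X,a_0)$, to equip it with the convergence of its own L-topology and to check that the resulting triple is a $\V$-categorical compact Hausdorff space; functoriality will then be routine.

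First I would construct the algebra structure. As $(X,a_0)$ is compact, $T_\V(X,a_0)$ is a compact topological space, and since $(X,a_0)$ is separated and $T_\V$ is monoidal it is moreover Hausdorff by Proposition~\ref{d:prop:3}\,(\ref{d:item:2}). Because $\SET^\mU\simeq\COMPHAUS$, this compact Hausdorff space corresponds to a unique $\mU$-algebra $(X,\alpha)$, where $\alpha:UX\to X$ assigns to each ultrafilter its (unique) limit in $T_\V(X,a_0)$. It then remains only to verify that $\alpha:U(X,a_0)\to(X,a_0)$ is a $\V$-functor, for then $(X,a_0,\alpha)$ lies in $(\Cats{\V})^\mU$ and we may set $T_\thU(X,a_0)=(X,a_0,\alpha)$.

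By Proposition~\ref{prop:1} this $\V$-functoriality is equivalent to $a_0:(X,\alpha)\times(X,\alpha)\to(\V,\hom_\xi)$ being a $\thU$-functor, and, spelling out the latter with the help of the description of $B_p$-convergence, it is in turn equivalent to the continuity of $a_0$ viewed as a map $T_\V(X,a_0)\times T_\V(X,a_0)\to B_p(\V,\hom_\xi)$. To obtain this continuity I would start from the fact that the identity distributor $a_0$ is a $\V$-functor $(X,a_0)^\op\otimes(X,a_0)\to(\V,\hom)$. Applying the functor $T_\V$ and then invoking its monoidality shows that $a_0$ is continuous of type $T_\V((X,a_0)^\op)\times T_\V(X,a_0)\to T_\V(\V,\hom)$; since $(X,a_0)$ and $(X,a_0)^\op$ carry the same L-closure (the remark following Proposition~\ref{prop:4}), the first factor is simply $T_\V(X,a_0)$. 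Composing with the continuous identity $T_\V(\V,\hom)\to B_p(\V,\hom_\xi)$ of Proposition~\ref{d:prop:3}\,(\ref{d:item:1}) then yields precisely the required continuity.

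Reading this off in terms of convergence closes the argument: if $\fw\in U(X\times X)$ converges to $(x,y)$ in the product $T_\V(X,a_0)\times T_\V(X,a_0)$, equivalently $x=\alpha(U\pi_1(\fw))$ and $y=\alpha(U\pi_2(\fw))$, then $Ua_0(\fw)$ converges to $a_0(x,y)$ in $B_p(\V,\hom_\xi)$, that is $\xi(Ua_0(\fw))\le a_0(\alpha(U\pi_1(\fw)),\alpha(U\pi_2(\fw)))$, which is exactly the inequality needed for $\alpha$ to be a $\V$-functor. For a morphism $f:(X,a_0)\to(Y,b_0)$ of $\Cats{\V}_{\sep,\comp}$, the map $T_\V(f)$ is continuous between compact Hausdorff spaces and hence a $\mU$-algebra homomorphism; being also a $\V$-functor, $f$ is a morphism of $(\Cats{\V})^\mU$, so we set $T_\thU(f)=f$. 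The two commutation identities are immediate, since forgetting $\alpha$ returns $(X,a_0)$ and forgetting $a_0$ returns the compact Hausdorff space $T_\V(X,a_0)$. The crux of the proof is the $\V$-functoriality of $\alpha$; the device that makes it work is passing to $(X,a_0)^\op$ in the first variable, which leaves the underlying topology untouched while bringing the monoidality of $T_\V$ and Proposition~\ref{d:prop:3}\,(\ref{d:item:1}) into play.
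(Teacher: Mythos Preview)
Your proof is correct and follows essentially the same route as the paper: the paper's entire argument is the one-line observation that the composite
\[
  T_\V(X,a_0)\times T_\V(X,a_0)\xrightarrow{\text{can}}T_\V((X,a_0)^\op\otimes(X,a_0))\xrightarrow{T_\V a_0}T_\V(\V,\hom)\longrightarrow B_p(\V,\hom_\xi)
\]
is continuous, which is precisely the chain you assemble from monoidality of $T_\V$, the equality $T_\V(X^\op)=T_\V(X)$, and Proposition~\ref{d:prop:3}\,(\ref{d:item:1}). Your added details (explicit use of Proposition~\ref{prop:1}, the ultrafilter reformulation, and the verification of functoriality on morphisms) simply unpack what the paper leaves implicit.
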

\begin{proof}
  Since the composite
  \[
    T_\V(X,a_0)\times T_\V(X,a_0)
    \xrightarrow{\text{can}}T_\V((X,a_0)^\op\otimes(X,a_0)) \xrightarrow{T_\V
      a_0}T_\V(\V,\hom) \longrightarrow B_p(\V,\hom_\xi)
  \]
  is continuous, every separated compact $\V$-category becomes a
  $\V$-categorical compact Hausdorff space when equipped with the topology of
  $T_\V(X)$.
\end{proof}

\begin{proposition}
  Let $\V=\quantale$ be a quantale where $k$ is approximated. Then $T_\V$ is
  monoidal.
\end{proposition}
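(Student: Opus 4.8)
The plan is to verify monoidality at the level of ultrafilter convergence. First note that $T_\V$ is indeed defined here, since $k$ approximated implies $k$ is $\vee$-irreducible by Proposition~\ref{d:prop:4}, so the L-closure is a topology by Corollary~\ref{d:cor:4}. As $T_\V$ lands in $\TOP$ and continuity of a map between topological spaces is equivalent to preservation of ultrafilter convergence (Barr), it suffices to show that the identity map
\[
  T_\V(X,a_0)\times T_\V(Y,b_0)\longrightarrow T_\V\big((X,a_0)\otimes(Y,b_0)\big)
\]
preserves convergent ultrafilters. Recalling that in any topological space an ultrafilter $\fw$ converges to a point exactly when that point lies in the closure of every member of $\fw$, and that in the product topology $\fw\to(x,y)$ iff $U\pi_1(\fw)\to x$ and $U\pi_2(\fw)\to y$, I reduce the claim to the following: writing $\fx=U\pi_1(\fw)$, $\fy=U\pi_2(\fw)$, $P(z)=a_0(x,z)\otimes a_0(z,x)$ and $Q(w)=b_0(y,w)\otimes b_0(w,y)$, if $\fx\to x$ and $\fy\to y$ then, for every $W\in\fw$, one has $(x,y)\in\overline{W}$, which by Proposition~\ref{prop:4} means $k\le\bigvee_{(z,w)\in W}P(z)\otimes Q(w)$.

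The heart of the argument is a set-membership claim made possible by the approximation of $k$. Fix $u\ll k$. Since $k=\bigvee_{u'\ll k}u'\otimes u'$ by Proposition~\ref{d:prop:4}, and since the totally-below relation is strong enough that $u\ll\bigvee_i s_i$ forces $u\le s_{i_0}$ for a \emph{single} index (take the down-set generated by the $s_i$), there is some $u'\ll k$ with $u\le u'\otimes u'$. The same single-summand principle drives the key observation: convergence $\fx\to x$ gives $k\le\bigvee_{z\in A}P(z)$ for every $A\in\fx$, hence $u'\ll\bigvee_{z\in A}P(z)$ and therefore $u'\le P(z_0)$ for some $z_0\in A$. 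Thus $A_{u'}=\{z\in X\mid u'\le P(z)\}$ meets every member of $\fx$, and since $\fx$ is an ultrafilter, $A_{u'}\in\fx$; symmetrically $B_{u'}=\{w\in Y\mid u'\le Q(w)\}\in\fy$.

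Finally I assemble the pieces. As $A_{u'}\in\fx=U\pi_1(\fw)$ and $B_{u'}\in\fy=U\pi_2(\fw)$, their preimages lie in $\fw$, so the rectangle $A_{u'}\times B_{u'}$ belongs to $\fw$; hence $W\cap(A_{u'}\times B_{u'})\neq\varnothing$, and choosing $(z_0,w_0)$ in this intersection gives $u\le u'\otimes u'\le P(z_0)\otimes Q(w_0)\le\bigvee_{(z,w)\in W}P(z)\otimes Q(w)$. Letting $u$ range over $\Dnw k$ and using $k=\bigvee\Dnw k$ yields the required inequality. The step I expect to be the main obstacle is precisely the passage from product ultrafilters to an arbitrary $\fw$: a generic $W\in\fw$ need contain no rectangle, so one cannot simply apply the distributivity estimate $k=k\otimes k\le\bigl(\bigvee_{z\in A}P(z)\bigr)\otimes\bigl(\bigvee_{w\in B}Q(w)\bigr)$ to a rectangle sitting inside $W$. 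This obstruction dissolves once one sees that $W$ need only \emph{meet} $A_{u'}\times B_{u'}$, which is exactly what membership of both sets in $\fw$ guarantees; it is the single-summand factoring of the totally-below relation (rather than the finite-join factoring available for the way-below relation) that places $A_{u'}$ and $B_{u'}$ in the respective ultrafilters.
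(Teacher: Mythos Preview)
Your proof is correct. Both your argument and the paper's hinge on the same computational core from Proposition~\ref{d:prop:4}: given $u\ll k$, find $v\ll k$ with $u\le v\otimes v$, and then observe that a product of ``balls of radius $v$'' in $X$ and $Y$ sits inside the ``ball of radius $u$'' in $X\otimes Y$. The difference is in packaging. The paper works on the neighbourhood side: it invokes (from \cite[Remark~4.21]{HR13}) that the L-closure topology is generated by symmetric open balls $\SOB(x,u)=\{y\mid u\ll a_0(x,y)\text{ and }u\ll a_0(y,x)\}$, and then checks the one-line inclusion $\SOB(x,v)\times\SOB(y,v)\subseteq\SOB((x,y),u)$. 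You work on the convergence side, using Proposition~\ref{prop:4} directly and the single-summand property of $\ll$ to place the sets $A_{u'}$, $B_{u'}$ inside the projected ultrafilters. Your route is longer but self-contained (no appeal to the external fact about open balls generating the topology); the paper's route is shorter but imports that ingredient. Your explicit discussion of why a rectangle need only \emph{meet} an arbitrary $W\in\fw$ is exactly the point where the convergence formulation earns its keep, and it is handled cleanly.
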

\begin{proof}
  Under the condition that $k$ is approximated, the topology of a $\V$-category
  $(X,a_0)$ is generated by the ``symmetric open balls''
  \[
    \SOB(x,u)=\{y\in X\mid u\ll a_0(x,y)\text{ and }u\ll a_0(y,x)\},
  \]
  for $x\in X$ and $u\ll k$ (see Proposition~\ref{d:prop:4} and \cite[Remark
  4.21]{HR13}). Let now $(X,a_0)$ and $(Y,b_0)$ be $\V$-categories,
  $(x,y)\in X\times Y$ and $u\ll k$. By Proposition~\ref{d:prop:4}, there is
  some $v\ll k$ with $u\leq v\otimes v$. Then
  $\SOB(x,v)\times\SOB(y,v)\subseteq\SOB((x,y),u)$; which proves the claim.
\end{proof}

\begin{examples}
  By the results of this subsection, every compact separated (probabilistic)
  metric space is a (probabilistic) metric compact Hausdorff space.
\end{examples}

\section{Completeness from compactness}

The central topic of this section is Cauchy completeness for $\thU$-categories,
a notion defined in terms of adjoint $\thU$-distributors (see \cite{CH09}). As
much as possible we avoid assuming that the ultrafilter theory $\thU$ is strict;
as a consequence, we cannot assume associativity of the composition of
$\thU$-distributors. The lack of associativity forces us to be more careful in
our treatment of adjunctions; in particular, adjoints need not be unique. We
show, under some conditions on the quantale $\V$, that the corresponding
$\thU$-category of a $\V$-categorical compact Hausdorff space is Cauchy
complete. Moreover, for strict theories $\thU$, we prove that the forgetful
functor $\Cats{\thU}\to\Cats{\V}$ preserves Cauchy-completeness. Combining both
results shows that the underlying $\V$-category of a $\V$-categorical compact
Hausdorff space is Cauchy complete. In the last subsection we go a step further
and study codirected completeness for $\V$-categories.

\subsection{$\thU$-distributors}
\label{sec:Udistr}

A $\V$-relation of the form $\varphi:UX\relto Y$ is called a
\df{$\thU$-relation} and think of $\varphi$ as an arrow from $X$ to $Y$, and
write $\varphi:X\krelto Y$. Composition is given by \df{Kleisli composition}:
\begin{gather*}
  \psi\circ \varphi:=\psi\cdot U_{\!_\xi} \varphi\cdot m_{X}^\circ,\\
  \xymatrix{UX\ar[dd]|{\object@{|}}^\varphi & UY\ar[dd]|{\object@{|}}^\psi & & & UX\ar[r]|{\object@{|}}^{m_{X}^\circ} \ar[rdd]|{\object@{|}}_{\psi\circ \varphi} & UUX\ar[d]|{\object@{|}}^{U_{\!_\xi}\varphi}\\
    && \ar@{|->}[r] &  && UY\ar[d]|{\object@{|}}^\psi\\
    Y & Z &&&& Z}
\end{gather*}
for all $\varphi:X\krelto Y$ and $\psi:Y\krelto Z$. One easily verifies
\begin{align*}
  \varphi\kleisli e_{X}^\circ&=\varphi\cdot Ue_{X}^\circ\cdot m_{X}^\circ=\varphi
                               \intertext{and}
                               e_{X}^\circ\kleisli \varphi &= e_{X}^\circ\cdot U_{\!_\xi}\varphi\cdot m_{X}^\circ\ge \varphi\cdot e_{UX}^\circ\cdot m_{X}^\circ=\varphi,
\end{align*}
for all $\thU$-relations $\varphi:X\krelto Y$; that is, $e_{X}^\circ$ is a lax
identity for the Kleisli composition. Moreover:

\begin{theorem}\label{thm:2}
  For composable $\thU$-relations we have
  \[
    \varphi\kleisli(\psi\kleisli\gamma)\ge (\varphi\kleisli\psi)\kleisli\gamma,
  \]
  with equality if $\thU$ is a strict theory.
\end{theorem}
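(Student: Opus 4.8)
The plan is to expand both composites by the definition $\psi\kleisli\varphi=\psi\cdot\Uxi\varphi\cdot m_X^\circ$ and then show that the nested extension on the left dominates the flattened composite on the right. Writing $\gamma:X\krelto Y$, $\psi:Y\krelto Z$ and $\varphi:Z\krelto W$, the two sides are
\[
  \varphi\kleisli(\psi\kleisli\gamma)=\varphi\cdot\Uxi(\psi\cdot\Uxi\gamma\cdot m_X^\circ)\cdot m_X^\circ,
  \qquad
  (\varphi\kleisli\psi)\kleisli\gamma=\varphi\cdot\Uxi\psi\cdot m_Y^\circ\cdot\Uxi\gamma\cdot m_X^\circ .
\]
Since composition in $\Rels{\V}$ is monotone, everything reduces to comparing the inner factor $\Uxi(\psi\cdot\Uxi\gamma\cdot m_X^\circ)\cdot m_X^\circ$ with $\Uxi\psi\cdot m_Y^\circ\cdot\Uxi\gamma\cdot m_X^\circ$, and I would carry out this comparison by a chain in which only one link is a genuine inequality.

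First I would strip the trailing $m_X^\circ$ out of the outer extension. As $m_X^\circ=(m_X)^\circ$ is the opposite of a function, passing to opposites and applying Theorem~\ref{thm:1}(3) to the function $m_X$, then Theorem~\ref{thm:1}(1), yields the \emph{equality} $\Uxi(s\cdot m_X^\circ)=\Uxi s\cdot(Um_X)^\circ$; with $s=\psi\cdot\Uxi\gamma$ this gives $\Uxi(\psi\cdot\Uxi\gamma\cdot m_X^\circ)=\Uxi(\psi\cdot\Uxi\gamma)\cdot(Um_X)^\circ$. Next, lax functoriality of $\Uxi$ (Theorem~\ref{thm:1}(4)) gives $\Uxi(\psi\cdot\Uxi\gamma)\ge\Uxi\psi\cdot\Uxi\Uxi\gamma$, with equality precisely when $\thU$ is strict; this is the \emph{only} place an inequality enters. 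Finally the monad associativity law $m_X\cdot Um_X=m_X\cdot m_{UX}$ rewrites the tail as $(Um_X)^\circ\cdot m_X^\circ=m_{UX}^\circ\cdot m_X^\circ$, leaving
\[
  \varphi\kleisli(\psi\kleisli\gamma)\ge\varphi\cdot\Uxi\psi\cdot\Uxi\Uxi\gamma\cdot m_{UX}^\circ\cdot m_X^\circ ,
\]
an equality in the strict case.

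The crux is then to turn $\Uxi\Uxi\gamma\cdot m_{UX}^\circ$ into $m_Y^\circ\cdot\Uxi\gamma$, which is exactly what converts the displayed lower bound into $(\varphi\kleisli\psi)\kleisli\gamma$. This is the step that requires care, and where a naive attempt fails: one is tempted to deduce it from the naturality square $m_Y\cdot\Uxi\Uxi\gamma=\Uxi\gamma\cdot m_{UX}$ of Theorem~\ref{thm:1}(5) by cancelling the $m$'s through the adjunction $m\dashv m^\circ$, but the available inequalities $m\cdot m^\circ\le 1\le m^\circ\cdot m$ point the wrong way and only yield the reverse comparison. The correct move is to apply the naturality \emph{equality} of (5) not to $\gamma$ but to its opposite $\gamma^\circ:Y\relto UX$, obtaining $m_{UX}\cdot\Uxi\Uxi(\gamma^\circ)=\Uxi(\gamma^\circ)\cdot m_Y$, and then to take opposites and use $\Uxi(r^\circ)=(\Uxi r)^\circ$ (Theorem~\ref{thm:1}(1)) twice; this delivers the honest equality $\Uxi\Uxi\gamma\cdot m_{UX}^\circ=m_Y^\circ\cdot\Uxi\gamma$. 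Substituting it into the previous display gives $\varphi\kleisli(\psi\kleisli\gamma)\ge(\varphi\kleisli\psi)\kleisli\gamma$, and since the sole inequality used was the lax functoriality step, the entire chain collapses to an equality whenever $\thU$ is strict.
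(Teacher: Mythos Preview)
Your argument is correct. The expansion of both composites, the peeling of $m_X^\circ$ via $\Uxi(s\cdot m_X^\circ)=\Uxi s\cdot(Um_X)^\circ$, the single use of lax functoriality $\Uxi(\psi\cdot\Uxi\gamma)\ge\Uxi\psi\cdot\Uxi\Uxi\gamma$, the associativity rewrite $(Um_X)^\circ\cdot m_X^\circ=m_{UX}^\circ\cdot m_X^\circ$, and the key identity $\Uxi\Uxi\gamma\cdot m_{UX}^\circ=m_Y^\circ\cdot\Uxi\gamma$ obtained by applying the naturality equality of $m$ to $\gamma^\circ$ and then taking opposites---all of this checks out, and your observation about why one must pass through $\gamma^\circ$ rather than cancel via $m\dashv m^\circ$ is exactly the point.

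The paper itself does not give a proof but simply cites \cite[Subsection~2.1]{Hof05}; your write-up is essentially the standard calculation one finds there, so the approach matches.
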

\begin{proof}
  See \cite[Subsection 2.1]{Hof05}.
\end{proof}

\begin{remark}
  In the language of $\thU$-relations, an $\thU$-category $(X,a)$ consists of a
  set $X$ and an $\thU$-relation $a:X\krelto X$ satisfying
  \begin{align*}
    e_X^\circ\le a &&\text{and}&& a\kleisli a\le a.
  \end{align*}
\end{remark}

\begin{definition}\label{def:3}
  A $\thU$-relation $\varphi:X\krelto Y$ between $\thU$-categories $X=(X,a)$ and
  $Y=(Y,b)$ is a \df{$\thU$-distributor}, written as $\varphi:X\kmodto Y$,
  whenever $\varphi\kleisli a\le\varphi$ and $b\kleisli\varphi\le\varphi$.  In
  pointwise notation $\varphi:X\krelto Y$ is $\thU$-distributor if, for all
  $\fr \in UX$, all $\fX \in UUX$, all $y\in Y$ and all $\fy \in UY$,
  \begin{align*}
    U_{\xi}a(\fX, \fr)\otimes\varphi(\fr,y)\le \varphi(m_X(\fX),y)
    && \text{and}
    && U_{\xi}\varphi(\fX, \fy)\otimes b(\fy,y)\le \varphi(m_X(\fX), y).
  \end{align*}
\end{definition}

In other words, an $\thU$-distributor $\varphi:X\kmodto Y$ comes with a
\emph{right action} of the $\thU$-relation $a$ and a \emph{left action} of
$b$. This perspective motivates the designations \emph{bimodule} or
\emph{module} used by some authors. Note that we always have
$\varphi\kleisli a\ge\varphi$ and $b\kleisli \varphi\ge \varphi$, so that the
$\thU$-distributor conditions above are in fact equalities which make the
$\thU$-structures identities for the composition of $\thU$-distributors.

\begin{remark}\label{rem:1}
  In general, $\thU$-distributors do not compose. However, this property is
  guaranteed by assuming that the ultrafilter theory is strict.
\end{remark}

The following result establishes a connection between $\thU$-distributors and
$\thU$-functors and generalises slightly \cite[Theorem~4.3]{CH09}.

\begin{theorem}\label{thm:3}
  Let $(X,a)$ and $(Y,b)$ be $\thU$-categories, and $\varphi:UX\relto Y$ be a
  $\V$-relation. Then the following assertions are equivalent.
  \begin{tfae}
  \item The $\V$-relation $\varphi$ is an $\thU$-distributor
    $\varphi:X\kmodto Y$.
  \item $\varphi:(UX,\hat{a})^\op\times(Y,1_Y)\to(\V,\hom)$ is a $\V$-functor
    and $\varphi:(UX,m_X)\times(Y,b)\to(\V,\hom_\xi)$ is an $\thU$-functor.
  \end{tfae}
\end{theorem}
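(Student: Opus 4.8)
The plan is to expand (i) into the two inequalities defining a $\thU$-distributor in Definition~\ref{def:3}, namely the \emph{right action} $\Uxi a(\fX,\fr)\otimes\varphi(\fr,y)\le\varphi(m_X(\fX),y)$ and the \emph{left action} $\Uxi\varphi(\fX,\fy)\otimes b(\fy,y)\le\varphi(m_X(\fX),y)$, and then to match the first with the $\V$-functor clause and the second with the $\thU$-functor clause of (ii). I read both products in (ii) as the tensor products of $\V$-, resp.\ $\thU$-categories, as is forced by the distributor characterisation ``$\psi:A^\op\otimes B\to\V$ is a $\V$-functor'' recalled earlier; the only machinery needed is the adjunction $u\otimes-\dashv\hom(u,-)$, the fact that $\otimes$ preserves arbitrary joins, and the explicit formulas for $\hat a$, $\Uxi\varphi$ and $\hom_\xi$.

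For the first correspondence I would start from $\hat a=\Uxi a\cdot m_X^\circ$ (Lemma~\ref{lem:2}), which gives $\hat a(\fr',\fr)=\bigvee_{m_X(\fX)=\fr'}\Uxi a(\fX,\fr)$. Because the factor $(Y,1_Y)$ is discrete, the defining inequality of the $\V$-functor $\varphi:(UX,\hat a)^\op\otimes(Y,1_Y)\to(\V,\hom)$ is trivial whenever $y\neq y'$ and, on the diagonal, the unit coming from $1_Y(y,y)=k$ is absorbed (since $\hat a^\circ(\fr,\fr')\otimes k=\hat a(\fr',\fr)$), so that it reads $\hat a(\fr',\fr)\le\hom(\varphi(\fr,y),\varphi(\fr',y))$ for all $\fr,\fr',y$. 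Transposing along $u\otimes-\dashv\hom(u,-)$ turns this into $\hat a(\fr',\fr)\otimes\varphi(\fr,y)\le\varphi(\fr',y)$; substituting the supremum formula for $\hat a$ and using that $\otimes$ preserves joins, the inequality over $\fr'$ together with the fibrewise join over $\{\fX\mid m_X(\fX)=\fr'\}$ becomes the single universally quantified statement $\Uxi a(\fX,\fr)\otimes\varphi(\fr,y)\le\varphi(m_X(\fX),y)$, which is exactly the right action.

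For the second correspondence I would compute the structure of $(UX,m_X)\otimes(Y,b)$ as $c(\fw,(\fr,y))=m_X(U\pi_1(\fw),\fr)\otimes b(U\pi_2(\fw),y)$ for $\fw\in U(UX\times Y)$. Since $m_X$ is function-induced this vanishes unless $\fr=m_X(U\pi_1(\fw))$, where it equals $b(U\pi_2(\fw),y)$. Hence the $\thU$-functor inequality $c(\fw,(\fr,y))\le\hom_\xi(U\varphi(\fw),\varphi(\fr,y))=\hom(\xi\cdot U\varphi(\fw),\varphi(\fr,y))$ reduces to $b(U\pi_2(\fw),y)\le\hom(\xi\cdot U\varphi(\fw),\varphi(m_X(U\pi_1(\fw)),y))$, and transposing yields $\xi\cdot U\varphi(\fw)\otimes b(U\pi_2(\fw),y)\le\varphi(m_X(U\pi_1(\fw)),y)$ for all $\fw$. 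Finally I would insert $\Uxi\varphi(\fX,\fy)=\bigvee\{\xi\cdot U\varphi(\fw)\mid U\pi_1(\fw)=\fX,\ U\pi_2(\fw)=\fy\}$ and, once more using preservation of joins by $\otimes$ (with $b(\fy,y)$ constant over the indexing set), collapse this family to the left action $\Uxi\varphi(\fX,\fy)\otimes b(\fy,y)\le\varphi(m_X(\fX),y)$.

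Combining the two equivalences gives (i)$\Leftrightarrow$(ii). I expect the difficulty to be purely in the bookkeeping: keeping the variance of the opposite category $(UX,\hat a)^\op$ straight, expanding $\hat a$ and $\Uxi\varphi$ as the correct suprema, and justifying each passage between a join-indexed inequality and a universally quantified one by the distributivity of $\otimes$ over joins, i.e.\ $\bigl(\bigvee_i u_i\bigr)\otimes v\le w\iff\forall i\,.\,u_i\otimes v\le w$. No strictness of $\thU$ enters, consistent with the standing convention of this section that associativity of Kleisli composition is not assumed.
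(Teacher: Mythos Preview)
Your proposal is correct and follows essentially the same route as the paper's proof: both split the distributor condition into its right and left action parts and match them, via the adjunction $u\otimes-\dashv\hom(u,-)$ and the explicit supremum formula for $\Uxi\varphi$, with the $\V$-functor and $\thU$-functor clauses respectively. The only cosmetic difference is that the paper first rewrites the right action $\varphi\kleisli a\le\varphi$ as $\varphi\cdot\hat a\le\varphi$ (so that the $\V$-functor condition $\hat a(\fr',\fr)\le\hom(\varphi(\fr,y),\varphi(\fr',y))$ falls out immediately without unpacking $\hat a$ back into $\Uxi a$ and $m_X$), whereas you keep the $m_X$ explicit and re-package via the supremum formula for $\hat a$; the second clause is handled identically in both.
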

\begin{proof}
  First note that $\varphi$ is an $\thU$-distributor if and only if
  \begin{align*}
    \varphi\cdot\hat{a}\le \varphi
    &&\text{and}
    && b\cdot\Uxi\varphi\le\varphi\cdot m_X.
  \end{align*}
  The first inequality above means precisely that, for all $y\in Y$ and all
  $\fx,\fy\in UX$,
  \[
    \varphi(\fx,y)\otimes\hat{a}(\fy,\fx)\le\varphi(\fy,y),
  \]
  which in turn is equivalent to
  \[
    \hat{a}(\fy,\fx)\le\hom(\varphi(\fx,y),\varphi(\fy,y)).
  \]
  Consequently, $\varphi\cdot\hat{a}\le \varphi$ if and only if, for all
  $y\in Y$,
  \[
    \varphi(-,y):(UX,\hat{a})^\op\to(\V,\hom)
  \]
  is a $\V$-functor; which is the case if and only if
  $\varphi:(UX,\hat{a})^\op\times(Y,1_Y)\to(\V,\hom)$ is a $\V$-functor.
	
  Secondly, $b\cdot\Uxi\varphi\le\varphi\cdot m_X$ if and only if, for all
  $\fX\in UUX$, $\fy\in UY$ and $y\in Y$,
  \[
    b(\fy,y)\otimes\Uxi\varphi(\fX,\fy)\le \varphi(m_X(\fX),y),
  \]
  and this inequality is equivalent to
  \[
    \bigvee\{b(\fy,y)\otimes\xi U\varphi(\fW)\mid \fW\in U(UX\times
    Y),U\pi_1(\fW)=\fX,U\pi_2(\fW)=\fy\} \le \varphi(m_X(\fX),y).
  \]
  The latter holds if and only if, for all $\fW\in U(UX\times Y)$, $\fx\in UX$
  and $y\in Y$ with $m_X(U\pi_1(\fW))=\fx$,
  \[
    b(U\pi_2(\fW),y)\le\hom(\xi U\varphi(\fW),\varphi(\fx,y)).
  \]
  Hence, $b\cdot\Uxi\varphi\le\varphi\cdot m_X$ is equivalent to
  $\varphi:(UX,m_X)\times(Y,b)\to(\V,\hom_\xi)$ being an $\thU$-functor.
\end{proof}

In the sequel we will consider in particular $\thU$-distributors with domain or
codomain $G$. For an $\thU$-category $X=(X,a)$, an $\thU$-relation
$\varphi:1\krelto X$ is an $\thU$-distributor $\varphi: G\kmodto X$ if and only
if, for all $x\in X$ and all $\fr \in UX$,
\[
  U_{\xi} \varphi(\fr)\otimes a(\fr,x)\le \varphi(x).
\]
Similarly, an $\thU$-relation $\psi:X\krelto 1$ is an $\thU$-distributor
$\psi:X\kmodto G$ if and only if, for all $\fr \in UX$ and all $\fX \in UUX$,
\begin{align*}
  U_{\xi} a(\fX,\fr)\otimes \psi(\fr)\le \psi(m_X(\fX))
  && \text{and}
  && U_{\xi}\psi(\fX)\le \psi(m_X(\fX)).
\end{align*}

Let $(X,a)$ and $(Y,b)$ be $\thU$-categories. Each map $f:X\to Y$ induces
$\thU$-relations
\begin{align*}
  f_\Tast=b\cdot Uf:X\krelto Y &&\text{and}&& f^\Tast=f^\circ\cdot b:Y\krelto X; 
\end{align*}
moreover, one has $b\kleisli f_\Tast\le f_\Tast$ and
$f^\Tast\kleisli b\le f^\Tast$. These $\thU$-relations are actually
$\thU$-distributors precisely when $f$ is an $\thU$-functor.

\begin{lemma}\label{lem:1}
  The following are equivalent, for $\thU$-categories $(X,a)$ and $(Y,b)$ and a
  map $f:X\to Y$.
  \begin{tfae}
  \item $f$ is an $\thU$-functor $f:(X,a)\to(Y,b)$.
  \item $f_\Tast$ is an $\thU$-distributor, that is,
    $f_\Tast\kleisli a\le f_\Tast$.
  \item $f^\Tast$ is an $\thU$-distributor, that is, $a\kleisli f^\Tast\le f\Tast$.
  \end{tfae}
\end{lemma}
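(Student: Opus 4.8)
The plan is to prove the two equivalences (i)$\Leftrightarrow$(ii) and (i)$\Leftrightarrow$(iii) by direct relational calculations, taking care to use only the (in)equalities of Theorem~\ref{thm:1} that do not require strictness. First I would record a bookkeeping reduction: since $f_\Tast=b\cdot Uf$ and $f^\Tast=f^\circ\cdot b$ already satisfy $b\kleisli f_\Tast\le f_\Tast$ and $f^\Tast\kleisli b\le f^\Tast$ for \emph{every} map $f$, the property of being an $\thU$-distributor reduces to the single remaining action, namely $f_\Tast\kleisli a\le f_\Tast$ for (ii) and $a\kleisli f^\Tast\le f^\Tast$ for (iii). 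The toolkit consists of the $\thU$-category axioms $e_X^\circ\le a$, $e_Y^\circ\le b$, $b\kleisli b\le b$; the monad unit laws $m\cdot Ue=1$ (so that $(Ue)^\circ\cdot m^\circ=1$); the naturalities $Uf\cdot e_X=e_Y\cdot f$ and $m_Y\cdot UUf=Uf\cdot m_X$; the identity $\Uxi(g^\circ)=(Ug)^\circ$ from Theorem~\ref{thm:1}(1)--(2); and the functoriality equalities $\Uxi(g\cdot r)=Ug\cdot\Uxi r$, $\Uxi(r\cdot g)=\Uxi r\cdot Ug$ for maps $g$ from Theorem~\ref{thm:1}(3).

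For the forward implications I would apply the locally monotone $\Uxi$ to the functor inequality. Reading (i) as $f\cdot a\le b\cdot Uf$, Theorem~\ref{thm:1}(3) turns this into $Uf\cdot\Uxi a\le\Uxi b\cdot UUf$, whence
\[
  f_\Tast\kleisli a=b\cdot Uf\cdot\Uxi a\cdot m_X^\circ\le b\cdot\Uxi b\cdot UUf\cdot m_X^\circ\le b\cdot\Uxi b\cdot m_Y^\circ\cdot Uf=(b\kleisli b)\cdot Uf\le f_\Tast,
\]
where the middle step pushes $UUf$ through $m_X^\circ$ using the mate $UUf\cdot m_X^\circ\le m_Y^\circ\cdot Uf$ of the naturality of $m$; this gives (i)$\Rightarrow$(ii). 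For (i)$\Rightarrow$(iii) I would read (i) as $a\le f^\circ\cdot b\cdot Uf$ and use $\Uxi f^\Tast=(Uf)^\circ\cdot\Uxi b$ to obtain
\[
  a\kleisli f^\Tast=a\cdot(Uf)^\circ\cdot\Uxi b\cdot m_Y^\circ\le f^\circ\cdot b\cdot Uf\cdot(Uf)^\circ\cdot\Uxi b\cdot m_Y^\circ\le f^\circ\cdot(b\kleisli b)\le f^\Tast,
\]
using $Uf\cdot(Uf)^\circ\le 1_{UY}$ and $b\kleisli b\le b$.

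For the reverse implications the point is to bound the Kleisli composite \emph{below}. For (ii)$\Rightarrow$(i) I would establish the always-valid estimate $f\cdot a\le f_\Tast\kleisli a$: op-lax naturality of $e$ (Theorem~\ref{thm:1}(5)) gives $e_X\cdot a\le\Uxi a\cdot e_{UX}$, hence $\Uxi a\cdot m_X^\circ\ge e_X\cdot a$ (because $e_{UX}^\circ\cdot m_X^\circ=1_{UX}$), so that $f_\Tast\kleisli a\ge b\cdot Uf\cdot e_X\cdot a=b\cdot e_Y\cdot f\cdot a\ge f\cdot a$, the last step using $b\cdot e_Y\ge e_Y^\circ\cdot e_Y\ge 1_Y$; combined with (ii) this yields $f\cdot a\le f_\Tast=b\cdot Uf$, i.e.\ (i). For (iii)$\Rightarrow$(i) I would instead bound $\Uxi b\ge\Uxi(e_Y^\circ)=(Ue_Y)^\circ$ by reflexivity, giving
\[
  a\kleisli f^\Tast=a\cdot(Uf)^\circ\cdot\Uxi b\cdot m_Y^\circ\ge a\cdot(Uf)^\circ\cdot(Ue_Y)^\circ\cdot m_Y^\circ=a\cdot(Uf)^\circ;
\]
then (iii) forces $a\cdot(Uf)^\circ\le f^\circ\cdot b$, and post-composing with $Uf$ (using $(Uf)^\circ\cdot Uf\ge 1_{UX}$) gives $a\le f^\circ\cdot b\cdot Uf=f^\Tast\cdot Uf$, which is (i).

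The main obstacle is the pair of reverse implications, together with the fact that, because $\thU$ need not be strict, none of the convenient equalities $\Uxi(s\cdot r)=\Uxi s\cdot\Uxi r$ nor associativity of Kleisli composition are available. Every manipulation of $\Uxi$ on a composite must therefore be routed through the function-composition equalities of Theorem~\ref{thm:1}(3) and the one-sided bounds, and the crux is choosing the correct lower estimate for the composites $\Uxi a\cdot m_X^\circ$ and $\Uxi b\cdot m_Y^\circ$. Notably the two reverse arguments are \emph{not} symmetric: (ii)$\Rightarrow$(i) relates $\Uxi a$ back to $a$ via the op-lax unit, whereas (iii)$\Rightarrow$(i) collapses $\Uxi b$ against $m_Y^\circ$ using reflexivity; keeping the direction of every inequality straight is where the care lies.
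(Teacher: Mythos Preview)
Your argument is correct. The paper does not give its own proof of this lemma; it simply refers to \cite[Subsection~3.6]{CH09}. Your relational calculations are valid and use only the one-sided properties of Theorem~\ref{thm:1}, so the argument goes through without any strictness assumption on $\thU$. One cosmetic remark: in the step (ii)$\Rightarrow$(i), the inequality $\Uxi a\cdot m_X^\circ\ge e_X\cdot a$ is most cleanly justified by $e_{UX}\le m_X^\circ$ (an immediate consequence of $m_X\cdot e_{UX}=1_{UX}$), rather than via the transpose $e_{UX}^\circ\cdot m_X^\circ=1_{UX}$; but the two formulations are of course equivalent.
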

\begin{proof}
  See \cite[Subsection 3.6]{CH09}.
\end{proof}

\begin{lemma}
  Let $f:A\to X$ and $g:Y\to B$ be $\thU$-functors and $\varphi:X\kmodto Y$ be an
  $\thU$-distributor. Then
  \begin{align*}
    \varphi\kleisli f_\Tast=\varphi\cdot Uf
    &&\text{and}
    && g^\Tast\kleisli\varphi=g^\circ\cdot\varphi
  \end{align*}
  are $\thU$-distributors.
\end{lemma}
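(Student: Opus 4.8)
The plan is to reduce both identities to the ordinary (associative) relational calculus of the quantaloid $\Rels{\V}$ set up in Theorem~\ref{thm:1}, exploiting that $f_\Tast$ and $g^\Tast$ are assembled from honest functions ($Uf$, $Ug$) together with the $\thU$-structures, and that the distributor $\varphi$ absorbs these structures on either side through its left and right actions (Definition~\ref{def:3}). The second identity will turn out to be immediate; the first one is the real work, since the Kleisli composition is only lax associative (Theorem~\ref{thm:2}), so the naive regrouping is unavailable.

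I begin with $g^\Tast\kleisli\varphi=g^\circ\cdot\varphi$. Writing $b$ for the $\thU$-structure on the codomain $Y$ of $\varphi$, we have $g^\Tast=g^\circ\cdot b$, and by the definition of the Kleisli composition
\[
  g^\Tast\kleisli\varphi=g^\Tast\cdot\Uxi\varphi\cdot m_X^\circ=g^\circ\cdot(b\cdot\Uxi\varphi\cdot m_X^\circ)=g^\circ\cdot(b\kleisli\varphi)=g^\circ\cdot\varphi,
\]
where the middle step is merely associativity of composition in $\Rels{\V}$ and the last step is the left action $b\kleisli\varphi=\varphi$ of the distributor $\varphi$. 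No laxness intervenes here because the single ``monad piece'' $m_X^\circ$ is entirely contained inside $b\kleisli\varphi$.

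For $\varphi\kleisli f_\Tast=\varphi\cdot Uf$ with $f:A\to X$, I expand $f_\Tast=a\cdot Uf$ ($a$ the structure on $X$) and use Theorem~\ref{thm:1}(3) to get $\Uxi f_\Tast=\Uxi a\cdot UUf$, whence $\varphi\kleisli f_\Tast=\varphi\cdot\Uxi a\cdot UUf\cdot m_A^\circ$. For ``$\le$'' I will combine naturality of the multiplication, $m_X\cdot UUf=Uf\cdot m_A$, with $m_A\cdot m_A^\circ=1_{UA}$ (surjectivity of $m_A$, from $m_A\cdot e_{UA}=1$) to obtain $m_X\cdot UUf\cdot m_A^\circ=Uf$, hence $UUf\cdot m_A^\circ\le m_X^\circ\cdot Uf$; substituting and invoking the right action $\varphi\cdot\Uxi a\cdot m_X^\circ=\varphi$ then gives $\varphi\kleisli f_\Tast\le\varphi\cdot Uf$. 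The reverse inequality is the delicate point: I will bound $m_A^\circ$ from below by $Ue_A$ (legitimate since $m_A\cdot Ue_A=1_{UA}$ forces $Ue_A\le m_A^\circ$), rewrite $UUf\cdot Ue_A=Ue_X\cdot Uf$ via naturality of the unit $Uf\cdot e_A=e_X\cdot f$, and finally use reflexivity $e_X^\circ\le a$, which after applying $\Uxi$ (Theorem~\ref{thm:1}(2)) yields $\Uxi a\cdot Ue_X\ge(Ue_X)^\circ\cdot Ue_X\ge 1_{UX}$; chaining these produces $\varphi\cdot Uf\le\varphi\cdot\Uxi a\cdot UUf\cdot m_A^\circ$. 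I expect this lower bound to be the main obstacle, precisely because lax associativity blocks the clean regrouping that settled the second identity in one line.

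It remains to verify that the two resulting $\V$-relations are $\thU$-distributors. For $\varphi\cdot Uf:A\kmodto Y$, the left action collapses, via $\Uxi(\varphi\cdot Uf)=\Uxi\varphi\cdot UUf$ and the same estimate $UUf\cdot m_A^\circ\le m_X^\circ\cdot Uf$, to the left action of $\varphi$; the right action with respect to the structure $d$ of $A$ follows by applying $\Uxi$ to the $\thU$-functor inequality $f\cdot d\le a\cdot Uf$ to get $Uf\cdot\Uxi d\le\Uxi a\cdot UUf$, and then reusing the identity just established. Dually, for $g^\circ\cdot\varphi:X\kmodto B$ the right action is immediate from associativity and the right action of $\varphi$, while the left action (with respect to the structure $c$ of $B$) follows from the $\thU$-functor inequality for $g$, which after taking adjoints gives $c\cdot(Ug)^\circ\le g^\circ\cdot b$, together with $\Uxi(g^\circ\cdot\varphi)=(Ug)^\circ\cdot\Uxi\varphi$ (Theorem~\ref{thm:1}(1),(3)) and the left action of $\varphi$. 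Every remaining manipulation is a routine use of Theorem~\ref{thm:1} and of the adjunctions $Uf\dashv(Uf)^\circ$ and $m\dashv m^\circ$ in $\Rels{\V}$.
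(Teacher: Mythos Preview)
Your argument is correct. The paper does not actually give a proof of this lemma; it simply refers to \cite[Proposition~3.6]{CH09}. Your direct verification via the relational calculus of Theorem~\ref{thm:1} --- using $UUf\cdot m_A^\circ\le m_X^\circ\cdot Uf$ (from naturality of $m$ and surjectivity of $m_A$) for the upper bound, and $Ue_A\le m_A^\circ$ together with $\Uxi a\cdot Ue_X\ge 1_{UX}$ for the lower bound --- is the standard computation and is essentially what one finds in the cited reference. The distributor checks are likewise routine and correctly argued.

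One small remark: for the second identity to type-check one needs $g:B\to Y$ rather than $g:Y\to B$ as printed in the statement (otherwise neither $g^\Tast\kleisli\varphi$ nor $g^\circ\cdot\varphi$ is defined). You implicitly read it this way when you write $g^\Tast=g^\circ\cdot b$ with $b$ the structure on $Y$, and your derivation of $c\cdot(Ug)^\circ\le g^\circ\cdot b$ from the $\thU$-functor inequality $g\cdot c\le b\cdot Ug$ (via $c\le g^\circ\cdot b\cdot Ug$ and then composing with $(Ug)^\circ$ on the right) is correct.
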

\begin{proof}
  See \cite[Proposition 3.6]{CH09}.
\end{proof}

Similarly to the case of $\V$-categories, the local order of $\Rels{\V}$ allows
us to consider $\Cats{\thU}$ as an ordered category: for $\thU$-functors
$f,g:X\to Y$,
\begin{align*}
  f\le g  & \iff f^\Tast \le g^\Tast \iff g_\Tast \le f_\Tast\\
          & \iff f^*\le g^*.
\end{align*}
In particular, every $\thU$-category $X$ has an underlying order where $x\le y$
whenever $x^\Tast\le y^\Tast$, for all $x,y\in X$; which in turn is equivalent
to $k\le a(\doo{x},y)$. This construction defines a functor
\[
  \FgtOrd{\thU}:\Cats{\thU} \longrightarrow\ORD,
\]
and the diagrams
\[
  \xymatrix{\Cats{\thU}\ar[d]_{B_p}\ar[r]^{(-)_0}\ar[dr]|{\FgtOrd{\thU}} & \Cats{\V}\ar[d]^{B_p}\\
    \TOP\ar[r]_{(-)_0} & \ORD}
\]
commute. A $\thU$-category $(X,a)$ is \df{separated} (see \cite{HT10}) whenever
the underlying ordered set $\FgtOrd{\thU}(X,a)$ is separated. We note that
$(-)_0:\Cats{\thU}\to\Cats{\V}$ sends separated $\thU$-categories to separated
$\V$-categories.

\subsection{Adjoint $\thU$-distributors}
\label{sec:adjoint-distributors}

In this subsection we study the important notion of adjoint
$\thU$-distributor. We employ here the usual definition of adjunction in an
ordered category; however, some extra caution is needed since
$\thU$-distributors in general do not compose.

\begin{definition}
  Let $X=(X,a)$ and $Y=(Y,b)$ be $\thU$-categories. A pair of $\thU$-distributors
  $\varphi:X\kmodto Y$ and $\psi:Y\kmodto X$ form an \df{adjunction}, denoted as
  $\varphi\dashv\psi$, whenever their composites, $\varphi \kleisli\psi$ and
  $\psi\kleisli\varphi$, are $\thU$-distributors and $a\le\psi\kleisli\varphi$
  and $\varphi\kleisli\psi\le b$.
\end{definition}

We hasten to remark that $f_\Tast\dashv f^\Tast$, for every $\thU$-functor
$f:(X,a)\to(Y,b)$. In fact, by \cite[Proposition 3.6 (2), p.~188]{CH09},
$f^\Tast \kleisli f_\Tast $ and $f_\Tast \kleisli f^\Tast$ are
$\thU$-distributors and
\[
  f_\Tast\kleisli f^\Tast=b\cdot Uf\cdot Uf^\circ\cdot\Uxi b\cdot m_Y^\circ\le
  b\cdot\Uxi b\cdot m_Y^\circ=b
\]
and
\[
  f^\Tast\kleisli f_\Tast =f^\circ \cdot b\cdot\Uxi b\cdot UUf\cdot m_X^\circ
  =f^\circ \cdot b\cdot\Uxi b\cdot m_Y^\circ\cdot Uf =f^\circ \cdot b\cdot Uf
  \ge f^\circ\cdot f\cdot a\ge a.
\]
Similarly to the nomenclature for $\V$-categories, we call an $\thU$-functor
$f:(X,a)\to(Y,b)$ \df{fully faithful} whenever $f^\Tast\kleisli f_\Tast=a$, and
\df{fully dense} whenever $f_\Tast\kleisli f^\Tast=b$.

In general, we are not able to prove unicity of left adjoints since composition
of $\thU$-distributors does not need to be associative. However, we can still
prove that right adjoints are unique:

\begin{proposition}\label{prop:2}
  Let $\varphi:X\kmodto Y$, $\psi:Y\kmodto X$ and $\psi':Y\kmodto X$ be
  $\thU$-distributors with $\varphi\dashv\psi$ and $\varphi\dashv\psi'$. Then
  $\psi=\psi'$.
\end{proposition}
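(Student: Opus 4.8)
The plan is to run the classical uniqueness-of-adjoints computation, being careful that Kleisli composition of $\thU$-relations is only \emph{laxly} associative (Theorem~\ref{thm:2}). First I would record the two ingredients the argument rests on. Since $\psi$ and $\psi'$ are $\thU$-distributors of type $Y\kmodto X$, the $\thU$-structures act as identities for the composition, so $a\kleisli\psi=\psi$ and $\psi'\kleisli b=\psi'$; this is exactly the equality noted right after Definition~\ref{def:3}. Second, Kleisli composition is monotone in each variable (it is assembled from the locally monotone extension $\Uxi$ and composition of $\V$-relations), and by Theorem~\ref{thm:2} it satisfies the lax associativity $(\alpha\kleisli\beta)\kleisli\gamma\le\alpha\kleisli(\beta\kleisli\gamma)$ for all composable $\thU$-relations.

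With these in hand, the heart of the proof is the chain
\begin{align*}
  \psi = a\kleisli\psi
  &\le (\psi'\kleisli\varphi)\kleisli\psi
  \le \psi'\kleisli(\varphi\kleisli\psi)\\
  &\le \psi'\kleisli b
  = \psi',
\end{align*}
where the first inequality uses the unit $a\le\psi'\kleisli\varphi$ of $\varphi\dashv\psi'$ together with monotonicity of $(-)\kleisli\psi$, the middle inequality is precisely the lax associativity of Theorem~\ref{thm:2} applied with $\alpha=\psi'$, $\beta=\varphi$, $\gamma=\psi$, and the third uses the counit $\varphi\kleisli\psi\le b$ of $\varphi\dashv\psi$ together with monotonicity of $\psi'\kleisli(-)$. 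Interchanging the roles of $\psi$ and $\psi'$ (now invoking the unit of $\varphi\dashv\psi$ and the counit of $\varphi\dashv\psi'$) yields $\psi'\le\psi$ by the identical computation, and hence $\psi=\psi'$.

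The point I would emphasise is that the lax associativity happens to point in exactly the favourable direction here: Theorem~\ref{thm:2} supplies $(\psi'\kleisli\varphi)\kleisli\psi\le\psi'\kleisli(\varphi\kleisli\psi)$, which is what the chain needs. This is also why the same strategy does \emph{not} give uniqueness of left adjoints — the dual computation would demand associativity in the opposite direction, which is unavailable without strictness of $\thU$. No obstacle beyond the displayed chain arises: all intermediate composites are defined as $\thU$-relations, and the only hypotheses actually consumed are that $\psi,\psi'$ are $\thU$-distributors (for the two identity equalities) and the unit/counit inequalities of the two adjunctions, so no associativity-dependent step is hidden in the argument.
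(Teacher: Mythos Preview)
Your proof is correct and essentially identical to the paper's own argument: both run the standard uniqueness-of-adjoints chain $\psi=a\kleisli\psi\le(\psi'\kleisli\varphi)\kleisli\psi\le\psi'\kleisli(\varphi\kleisli\psi)\le\psi'\kleisli b=\psi'$, using the distributor identities, monotonicity, the lax associativity of Theorem~\ref{thm:2}, and the unit/counit inequalities, then swap $\psi$ and $\psi'$. Your additional commentary on why the lax associativity points in the favourable direction (and why left adjoints need not be unique) is apt and matches the paper's remarks surrounding the proposition.
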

\begin{proof}
  From $a\le\psi\kleisli\varphi$ we get
  $\psi'=a\kleisli\psi' \le(\psi\kleisli\varphi)\kleisli\psi'
  \le\psi\kleisli(\varphi\kleisli\psi') \le\psi\kleisli b=\psi$. Similarly,
  $\psi\le\psi'$, and we conclude that $\psi=\psi'$.
\end{proof}

We turn now our attention to $\thU$-distributors with domain or codomain $G$.

\begin{lemma}
  Let $X=(X,a)$ be an $\thU$-category and $\varphi:G\kmodto X$ and
  $\psi:X\kmodto G$ be $\thU$-distributors. Then the composites
  $\varphi\kleisli\psi$ and $\psi\kleisli\varphi$ are $\thU$-distributors.
\end{lemma}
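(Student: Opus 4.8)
The plan is to handle the two composites separately: $\psi\kleisli\varphi$ is essentially free, while $\varphi\kleisli\psi$ carries all of the work. For $\psi\kleisli\varphi:G\kmodto G$ I would exploit that $G=(1,k)$ and $U1\cong 1$, so the components $e_1$ and $m_1$ of the monad are identities and the Eilenberg--Moore unit law $\xi\cdot e_\V=1_\V$ forces $\Uxi$ to act as the identity on $\V$-relations of type $1\relto 1$. Consequently the lax unit laws for Kleisli composition become genuine equalities on the one-point set, and the two $\thU$-distributor conditions against the structure $k$ of $G$ reduce to the triviality $c\le c$ for the single value $c$ of the relation. Hence every $\thU$-relation $G\krelto G$ is a $\thU$-distributor, in particular $\psi\kleisli\varphi$.

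Now set $\theta:=\varphi\kleisli\psi:X\kmodto X$. My first move is to simplify $\theta$. By definition $\theta=\varphi\cdot\Uxi\psi\cdot m_X^\circ$, so $\theta(\fx,x)=\bar\psi(\fx)\otimes\varphi(x)$, where $\bar\psi(\fx)=\bigvee_{m_X(\fX)=\fx}\Uxi\psi(\fX)$. The left-unit condition $\Uxi\psi(\fX)\le\psi(m_X(\fX))$ for the distributor $\psi:X\kmodto G$ gives $\bar\psi\le\psi$, while choosing $\fX=e_{UX}(\fx)$ and using $m_X\cdot e_{UX}=1_{UX}$ together with $\xi\cdot e_\V=1_\V$ gives $\bar\psi(\fx)\ge\psi(\fx)$. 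Thus $\bar\psi=\psi$, and $\theta$ is the ``rank-one'' relation $\theta(\fx,x)=\psi(\fx)\otimes\varphi(x)$, i.e.\ the ordinary $\V$-relation composite $\varphi\cdot\psi$ through $1$.

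For the right-action condition I would avoid computation: by lax associativity (Theorem~\ref{thm:2}) and monotonicity of $\varphi\kleisli(-)$,
\[
  (\varphi\kleisli\psi)\kleisli a\le\varphi\kleisli(\psi\kleisli a)\le\varphi\kleisli\psi ,
\]
the last step using the right-action condition $\psi\kleisli a\le\psi$ of $\psi$. The left-action condition $a\kleisli\theta\le\theta$ is the genuine obstacle, since here lax associativity bounds $a\kleisli\theta$ from \emph{below} rather than above. I would instead compute directly from $a\kleisli\theta=a\cdot\Uxi\theta\cdot m_X^\circ$, the crux being the pointwise estimate
\[
  \Uxi\theta(\fX,\fr)\le\Uxi\psi(\fX)\otimes\Uxi\varphi(\fr),
\]
which follows by writing the reduced $\theta$ as $\otimes\cdot\langle\psi\pi_1,\varphi\pi_2\rangle$ and invoking the lax-continuity square for $\otimes$ from the definition of the ultrafilter theory, evaluated along the two projections (so that $\xi\cdot U\pi_i$ recover $\Uxi\psi(\fX)$ and $\Uxi\varphi(\fr)$). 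Substituting this into the expansion of $(a\kleisli\theta)(\fy,x)$ and then applying $\Uxi\psi(\fX)\le\psi(m_X(\fX))=\psi(\fy)$ and the distributor condition $\Uxi\varphi(\fr)\otimes a(\fr,x)\le\varphi(x)$ for $\varphi:G\kmodto X$ collapses the supremum to $\psi(\fy)\otimes\varphi(x)=\theta(\fy,x)$.

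The single delicate point is the estimate $\Uxi\theta(\fX,\fr)\le\Uxi\psi(\fX)\otimes\Uxi\varphi(\fr)$: it is the only place where merely \emph{lax} continuity of $\otimes$ is available, and it can be used in the required ($\le$) direction precisely because $\theta$ has first been reduced to a pointwise tensor of $\psi$ and $\varphi$. Everything else is routine reindexing over ultrafilters together with the fact that $\otimes$ distributes over suprema.
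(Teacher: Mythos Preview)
Your proof is correct and follows essentially the same route as the paper. Both arguments first reduce $\varphi\kleisli\psi$ to the rank-one relation $\theta(\fx,x)=\psi(\fx)\otimes\varphi(x)$ (the paper writes this as $\varphi\cdot e_1\cdot\psi$ and obtains it via $e_1^\circ\kleisli\psi=\psi$), both dispatch the right action by lax associativity, and both handle the left action by exploiting lax continuity of $\otimes$ to bound $\Uxi\theta$ by the pointwise tensor $\Uxi\psi\otimes\Uxi\varphi$. The only cosmetic difference is that the paper keeps the calculation in the relational calculus---writing $\Uxi(\varphi\cdot e_1\cdot\psi)=\Uxi\varphi\cdot Ue_1\cdot\Uxi\psi$, then replacing $Ue_1$ by $m_1^\circ$ to recognise $a\kleisli\varphi$ and finish at $\varphi\kleisli\psi$ directly---whereas you unfold everything pointwise and invoke the distributor condition $\Uxi\psi(\fX)\le\psi(m_X(\fX))$ explicitly at the end. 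Your explicit isolation of the lax-continuity step as the ``single delicate point'' is exactly right.
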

\begin{proof}
  Clearly, $\psi\kleisli\varphi:G\kmodto G$ is an $\thU$-distributor. To prove
  that $\varphi\kleisli \psi$ is indeed an $\thU$-distributor of type
  $X\kmodto X$, we verify first that
  \[
    \varphi \kleisli\psi=\varphi\cdot e_1\cdot e^\circ_1 \cdot U\psi\cdot
    m^\circ_X = \varphi\cdot e_1\cdot (e^\circ_1 \kleisli\psi)= \varphi \cdot
    e_1 \cdot \psi.
  \]
  Therefore
  \[
    a\kleisli (\varphi \kleisli\psi)=a\cdot U\varphi\cdot Ue_1\cdot U\psi\cdot
    m^\circ_X \le a\cdot U\varphi\cdot m^\circ_1\cdot U\psi\cdot
    m^\circ_X=(a\kleisli \varphi)\cdot U\psi \cdot m^\circ_X= \varphi
    \kleisli\psi,
  \]
  and
  \[
    (\varphi \kleisli\psi)\kleisli a\le \varphi\kleisli(\psi \kleisli a)=\varphi
    \kleisli\psi.\qedhere
  \]
\end{proof}

Therefore, when studying adjunctions of the form
\[
  X\adjunctkmod{\varphi}{\psi}G,
\]
we do not need to worry about the composites $\varphi \kleisli\psi$ and
$\psi\kleisli\varphi$. Elementwise, $\varphi\dashv\psi$ translates to
\begin{align*}
  k\le\bigvee_{\fz\in UX}\psi(\fz)\otimes \xi U\varphi(\fz)
  &&\text{and}
  && \psi(\fx)\otimes\varphi(x)\le a(\fx,x),
\end{align*}
for all $\fx\in UX$ and $x\in X$. We also point out that
\begin{itemize}
\item A map $\varphi:X\to\V$ (seen as an $\thU$-relation $\varphi:G\krelto X$)
  is an $\thU$-distributor $\varphi:G\kmodto X$ if and only if
  $a\kleisli\varphi\le\varphi$ if and only if $\varphi:X\to\V$ is a
  $\thU$-functor (see Theorem~\ref{thm:3}) if and only if
  \[
    \varphi(x)=\bigvee_{\fx\in UX}a(\fx,x)\otimes\xi U\varphi(\fx).
  \]
\item A $\thU$-relation $\psi:X\krelto G$ is an $\thU$-distributor
  $\psi:X\kmodto G$ if and only if $\psi\kleisli a\le\psi$ and
  $e_1^\circ\cdot\Uxi\psi\cdot m_X^\circ\le\psi$.
\end{itemize}

\begin{proposition}\label{prop:3}
  Let $\psi:X\kmodto G$, $\varphi:G\kmodto X$ and $\varphi':G\kmodto X$ be
  $\thU$-distributors with $\varphi\dashv\psi$ and $\varphi'\dashv\psi$. Then
  $\varphi=\varphi'$.
\end{proposition}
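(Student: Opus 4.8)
The plan is to imitate the proof of Proposition~\ref{prop:2} (uniqueness of right adjoints), but dualised so as to push $\varphi$ towards $\varphi'$ through the composite $(\varphi\kleisli\psi)\kleisli\varphi'$. A naive dualisation fails: it would require $\varphi\kleisli(\psi\kleisli\varphi')\le(\varphi\kleisli\psi)\kleisli\varphi'$, whereas Theorem~\ref{thm:2} only gives the opposite inequality. This is exactly the obstruction to uniqueness of left adjoints noted above, and the whole point is that it vanishes here because the inner composition runs through $G$: over the one-point set $U1\cong 1$ the relevant triple composite collapses to tensoring by a scalar, so associativity becomes strict and no appeal to Theorem~\ref{thm:2} is needed.

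First I would record the pointwise shape of the composites. The lemma above (whose proof yields $\varphi\kleisli\psi=\varphi\cdot e_1\cdot\psi$) gives $(\varphi\kleisli\psi)(\fx,x)=\psi(\fx)\otimes\varphi(x)$ for all $\fx\in UX$ and $x\in X$. Unravelling $(\varphi\kleisli\psi)\kleisli\varphi'=(\varphi\kleisli\psi)\cdot\Uxi\varphi'\cdot m_1^\circ$ over $U1\cong 1$, and using that $\otimes$ is commutative and distributes over suprema, I would obtain
\[
  \big((\varphi\kleisli\psi)\kleisli\varphi'\big)(x)=\bigvee_{\fz\in UX}\xi U\varphi'(\fz)\otimes\psi(\fz)\otimes\varphi(x)=c\otimes\varphi(x),\qquad c:=\bigvee_{\fz\in UX}\psi(\fz)\otimes\xi U\varphi'(\fz).
\]
Here $c$ is precisely the left-hand side of the unit inequality for $\varphi'\dashv\psi$, so that $k\le c$.

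With these facts the argument closes at once. Since $a$ is the identity distributor on $X$ and the Kleisli composition is monotone in each variable, the counit inequality $\varphi\kleisli\psi\le a$ of $\varphi\dashv\psi$ yields
\[
  \varphi'=a\kleisli\varphi'\ge(\varphi\kleisli\psi)\kleisli\varphi'=c\otimes\varphi\ge k\otimes\varphi=\varphi,
\]
the penultimate step using $k\le c$. Hence $\varphi\le\varphi'$. Interchanging the roles of $\varphi$ and $\varphi'$ (now invoking the counit of $\varphi'\dashv\psi$ and the unit of $\varphi\dashv\psi$) gives $\varphi'\le\varphi$, and therefore $\varphi=\varphi'$. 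The one genuinely load-bearing step is the scalar-collapse computation of $(\varphi\kleisli\psi)\kleisli\varphi'$; the rest is formal and, notably, requires no strictness assumption on $\thU$.
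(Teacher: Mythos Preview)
Your proof is correct and is essentially the paper's argument, just packaged more categorically. The paper carries out the same pointwise calculation directly: from the unit of $\varphi\dashv\psi$ and the counit of $\varphi'\dashv\psi$ one gets
\[
  \varphi'(x)\le\bigvee_{\fz\in UX}\varphi'(x)\otimes\psi(\fz)\otimes\xi U\varphi(\fz)\le\bigvee_{\fz\in UX}a(\fz,x)\otimes\xi U\varphi(\fz)=\varphi(x),
\]
which is precisely your scalar-collapse computation read in the other direction (unit first, then counit, yielding $\varphi'\le\varphi$ rather than $\varphi\le\varphi'$). Your explicit identification of the ``collapse through $G$'' as the reason associativity is not needed is a nice conceptual gloss that the paper leaves implicit.
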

\begin{proof}
  We calculate
  \[
    \varphi'(x)\le\bigvee_{\fz\in UX}\varphi'(x)\otimes\psi(\fz)\otimes \xi
    U\varphi(\fz) \le \bigvee_{\fz\in UX}a(\fx,x)\otimes \xi
    U\varphi(\fz)=\varphi(x).\qedhere
  \]
\end{proof}

\subsection{Cauchy complete $\thU$-categories}
\label{sec:cauchy-compl-categ}

With the notion of adjunction of $\thU$-distributors at our disposal, we come
now to the concept of Cauchy completeness (called Lawvere completeness in
\cite{CH09}).

\begin{definition}\label{def:5}
  A $\thU$-category $X=(X,a)$ is called \df{Cauchy complete} whenever every
  adjunction
  \[
    X\adjunctkmod{\varphi}{\psi}G,
  \]
  of $\thU$-distributors is of the form $x_\Tast\dashv x^\Tast$, for some
  $x\in X$.
\end{definition}

Note that $x_\Tast = a(\doo{x},-)$ and that $x^\Tast= a(-,x)$, so that
$x_\Tast\dashv x^\Tast$ means, for all $\fx\in UX$ and $x'\in X$,
\begin{align*}
  k\le\bigvee_{\fz\in UX}a(\fz,x)\otimes U_{\xi}x_\Tast(\fz)
  &&\text{and}
  && a(\fx,x)\otimes a(\doo{x},x')\le a(\fx,x').
\end{align*}

\begin{examples}
  Various examples of Cauchy complete $\thU$-categories are described in
  \cite{CH09}, we sketch here the principal facts.
  \begin{enumerate}
  \item We have already seen that $\TOP\simeq\Cats{\thU_\two}$. In this context,
    a $\thU_\two$-distributor $\varphi : (X,a)\kmodto (Y,b)$ is a relation
    $\varphi:UX\relto Y$ that satisfies, for all $y\in Y$, $\fy \in UY$,
    $\fr \in UX$ and $\fX \in UUX$,
    \begin{align*}
      \fX\to \fr \;\&\; \varphi (\fr, y) \implies
      \varphi(m_X(\fX),y)
      &&\text{and}
      && U_{\xi}\varphi(\fX,\fy)\;
         \&\; \fy \to y \implies \varphi(m_X(\fX),y).
    \end{align*}
    In particular, $\thU_\two$-distributors of the form $\varphi: G\kmodto X$
    can be identified with relations $\varphi:1\relto X$ satisfying
    \[
      \forall x\in X\, \forall \fr \in UX\,.\, (U_{\xi}\varphi(\fr)\; \&\; \fr
      \to x) \implies \varphi(x),
    \]
    and a relation $\psi:UX\relto 1$ is a $\thU_\two$-distributor
    $\psi: X\kmodto G$ if and only if
    \begin{align*}
      (\fX \to \fr\;\&\;\psi(\fr)) \le \psi(m_X(\fX))
      &&\text{and}
      &&U_{\xi}\psi(\fX)\le \psi(m_X(\fX)),
    \end{align*}
    for all $\fX\in UUX$ and $\fr\in UX$. Using Theorem~\ref{thm:3}, a
    $\thU_\two$-distributor $\varphi:G\kmodto X$ can be also seen as a
    continuous map $X\to\two$ into the Sierpi\'nski space, which in turn can be
    interpreted as a closed subset $A\subseteq X$. A $\thU$-distributor
    $\psi: X\kmodto G$ is a map $UX\to \two$ which is continuous with respect to
    the Zariski closure on $UX$ (an ultrafilter $\fx\in UX$ belongs to the
    closure of $\mathcal{B}\subseteq UX$ whenever
    $\bigcap \mathcal{B}\subseteq \fr$) and anti-monotone with respect to the
    order relation where
    \[
      \fx\le\fy\hspace{1ex}\text{whenever}\hspace{1ex}\forall
      A\in\fx\,.\,\overline{A}\in\fy,
    \]
    for all $\fx,\fy\in UX$. Such maps correspond precisely to subsets
    $\calA\subseteq UX$ which are Zariski closed and down-closed with respect to
    the order relation defined above.

    A pair of $\thU$ -distributors forms an adjunction
    $X\adjunctkmod{\varphi}{\psi}G$ if and only if
    \begin{align*}
      \exists\fx\in UX\,.\, U_{\xi}\varphi(\fx)\;\&\; \psi (\fx)
      &&\text{and}
      &&\forall x\in X\, \forall \fx \in UX\,.\,(\psi (\fx)
         \;\&\; \varphi(x)) \implies \fr\to x.
    \end{align*}
    In terms of the corresponding subsets $A\subseteq X$ and
    $\mathcal{A}\subseteq UX$, these conditions read as
    \begin{align*}
      \exists\fx\in UX\,.\,(A\in\fx \hspace{1ex} \& \hspace{1ex} \fx\in\mathcal{A})
      &&\text{and}
      &&\forall x\in X\, \forall \fx\in UX\,.\,(\fx\in\mathcal{A}\hspace{1ex} \& \hspace{1ex}x\in A)\implies \fx\to x.
    \end{align*}
    From this it follows that $\varphi:G\kmodto X$ is left adjoint if and only
    if of the corresponding closed subset $A\subseteq X$ is
    irreducible. Consequently, a topological space $X$ is Cauchy complete if and
    only if $X$ is weakly sober.
  \item We consider now $\thU=\thU_{\Pp}$, and recall that
    $\thU_{\Pp}\simeq\APP$. Here, a $\thU_{\Pp}$-distributor
    $\varphi :(X,a)\kmodto (Y,b)$ is a $\Pp$-relation $\varphi:UX\relto Y$
    subject to $\varphi\kleisli a\geqslant \varphi$ and
    $b\kleisli \varphi \geqslant\varphi$. These conditions express that, for all
    $y\in Y$, $\fy \in UY$, $\fr \in UX$ and $\fX \in UUX$,
    \begin{align*}
      U_{\xi}a(\fX,\fr)+\varphi(\fr,y)\geqslant \varphi(m_X(\fX),y)
      && \text{and}
      && U_{\xi}\varphi(\fX, \fy)+b(\fy,y)\geqslant \varphi(m_X(\fX),y).
    \end{align*}
    A $\thU_{\Pp}$-distributor of the type $\varphi: G\kmodto X$ can be seen as
    a $\thU_{\Pp}$-functor $\varphi:X\to\Pp$ and it is characterised by
    \[
      U_{\xi}\varphi(\fr)+a(\fr,x)\geq \varphi(x),
    \]
    for $x\in X$ and $\fr \in UX$, and a $\thU_{\Pp}$-distributor of the type
    $\psi: X\kmodto G$ is a mapping $UX\to \Pp$ that satisfies
    \begin{align*}
      U_{\xi}a(\fX,\fr)+\psi(\fr)\geqslant \psi(m_X(\fX))
      && \text{and}
      && U_{\xi}\psi(\fX)\geqslant \psi(m_X(\fX)),
    \end{align*}
    for all $\fr \in UX$ and $\fX \in UUX$. $\thU_{\Pp}$-distributors form an
    adjunction of type $X\adjunctkmod{\varphi}{\psi}G$ if, for all $x\in X$ and
    $\fX \in UUX$,
    \begin{align*}
      0\geqslant \bigwedge_{\fr \in UX} U_\xi \varphi(\fr)+\psi(\fr)
      && \text{and}
      && \psi(m_X(\fX)) + \varphi(x)\geqslant a(m_X(\fX),x).
    \end{align*}
    Furthermore, $\thU_{\Pp}$-distributors type $\varphi: G\kmodto X$ are
    identified with closed variable sets. Here a variable set is a family
    $(A_v)_{v\in [0,\infty]}$ such that, for all $v\in [0,\infty]$,
    $A_v=\bigcap_{u>v}A_u$. Such a variable set is closed whenever, for all
    $u,v \in [0,\infty]$, $\{x\in X \mid d(A_u,x)\leq v \}\subseteq A_{u+v}$,
    where $d(A_u,x)=\inf\{a(\fr, x) \mid \fr \in UA_u\}$. A
    $\thU_{\Pp}$-distributor $\psi:X\kmodto G$ which is right adjoint to
    $\varphi$ is induced by the variable set
    $\mathcal{A}=(\mathcal{A}_v)_{v\in [0,\infty]}$ with
    $\mathcal{A}_v=\{\fr\in UX\mid \forall u\in [0,\infty], \forall x\in A_u,
    a(\fr, x)\leq u+v \}$. Such a variable set $\mathcal{A}$ corresponds to a
    right adjoint of $\varphi$ if and only if $A$ is irreducible, that is, for
    all $u\in [0,\infty]$ with $u>0$, $UA_u \cap \mathcal{A}\neq \varnothing$. A
    $\thU_{\Pp}$-distributor $\varphi : G\kmodto X$ is represented by $x\in X$
    if and only if the induced variable set $A=(A_v)_{v\in [0,\infty]}$ is given
    by $A_v=\{y\in X\mid d(x,y)\leq v\}$ for each $v\in [0,\infty]$. Therefore
    an approach space $X$ is Cauchy complete if and only if each irreducible
    variable set is representable. Finally, this condition is equivalent to $X$
    being weakly sober in the sense of \cite{BLO06}.
  \end{enumerate}
\end{examples}

\subsection{$\thU$-distributors vs $\thU$-functors}

In this subsection we will show that, under suitable conditions, every
$\thU$-category of the form $K(X,a_0,\alpha)$ is Cauchy complete, for
$(X,a_0,\alpha)$ in $(\Cats{\V})^\mU$.  For an $\thU$-category $X=(X,a)$ and
$M\subseteq X$, we define
\[
  \varphi_M(x)=\bigvee_{\fz\in UM}a(\fz,x),
\]
for all $x\in X$. We can view $\varphi_M$ as an $\thU$-relation
$\varphi_M:1\krelto X$ given by $\varphi_M=a\cdot Ui\cdot U!^\circ$ (here
$i:M\hookrightarrow X$ and $!:M\to 1$). It is easy to see that $\varphi_M$ is
actually an $\thU$-distributor $\varphi_M:G\kmodto X$, hence, $\varphi_M:X\to\V$
is an $\thU$-functor. We also not that $\varphi_\varnothing=\bot$ and
$\varphi_{A\cup B}=\varphi_A\vee\varphi_B$.

We import now from \cite[Lemma 3.2 and Corollary 3.3]{HS11}:

\begin{proposition}\label{prop:6}
  Let $\thU=\utheory$ be an ultrafilter theory where $\V$ is completely
  distributive and $\xi:U\V\to\V$ is as in Theorem~\ref{thm:6}. For every
  $\thU$-category $X=(X,a)$, $\fx\in UX$ and $x\in X$,
  $a(\fx,x)=\bigwedge_{A\in\fx}\varphi_A(x)$.
\end{proposition}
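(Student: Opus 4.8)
The plan is to prove the two inequalities separately, the inequality $a(\fx,x)\le\bigwedge_{A\in\fx}\varphi_A(x)$ being the routine one. Indeed, for every $A\in\fx$ the ultrafilter $\fx$ itself lies in $UA=\{\fz\in UX\mid A\in\fz\}=A^\#$, so that $\varphi_A(x)=\bigvee_{\fz\in UA}a(\fz,x)\ge a(\fx,x)$; taking the infimum over all $A\in\fx$ then gives the claim. This mirrors the topological fact that every member of a convergent ultrafilter contains the limit in its closure.

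For the reverse inequality $L:=\bigwedge_{A\in\fx}\varphi_A(x)\le a(\fx,x)$ I would exploit complete distributivity and write $L=\bigvee\{u\mid u\ll L\}$, so that it suffices to fix $u\ll L$ and show $u\le a(\fx,x)$. From $u\ll L\le\varphi_A(x)$ and the properties of $\ll$ one gets $u\ll\bigvee_{\fz\in A^\#}a(\fz,x)$ for every $A\in\fx$; applying the defining property of $\ll$ to the down-closure of $\{a(\fz,x)\mid\fz\in A^\#\}$ then produces some $\fz\in A^\#$ with $u\le a(\fz,x)$. Hence the set $W=\{\fz\in UX\mid u\le a(\fz,x)\}$ meets every $A^\#$. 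Since $(A\cap B)^\#=A^\#\cap B^\#$ and $\fx$ is a filter, the family $\{A^\#\mid A\in\fx\}\cup\{W\}$ has the finite intersection property, so by the ultrafilter lemma there is $\fX\in UUX$ containing all $A^\#$ together with $W$; containment of all $A^\#$ forces $m_X(\fX)=\fx$.

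It then remains to transport $u$ through this $\fX$. Writing $b=a(-,x):UX\to\V$, the inclusion $W\subseteq b^{-1}(\upc u)$ gives $\upc u\in Ub(\fX)$, whence the self-dual formula $\xi(\fv)=\bigvee_{C\in\fv}\bigwedge_{v\in C}v$ yields $\xi(Ub(\fX))\ge\bigwedge_{v\in\upc u}v=u$. On the other hand, the image of $\fX$ under $\fz\mapsto(\fz,x)$ is one admissible $\fw$ in the supremum defining $\Uxi a(\fX,\doo x)$, so $\xi(Ub(\fX))\le\Uxi a(\fX,\doo x)$; combining the lax Eilenberg--Moore axiom $\Uxi a(\fX,\doo x)\otimes a(\doo x,x)\le a(m_X(\fX),x)$ with reflexivity $k\le a(\doo x,x)$ and with $m_X(\fX)=\fx$ gives $\Uxi a(\fX,\doo x)\le a(\fx,x)$. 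Chaining these, $u\le\xi(Ub(\fX))\le a(\fx,x)$, as required. I expect the main obstacle to be the construction of $\fX$: one must arrange simultaneously that it projects onto $\fx$ under $m_X$ and that it still ``sees'' the witnessing set $W$, and then feed it correctly into the transitivity axiom. The self-duality of $\xi$ is precisely what converts the worst-case supremum defining $\varphi_A$ back into the infimum needed to recover the lower bound $u$.
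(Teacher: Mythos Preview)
Your argument is correct. The easy inequality is indeed immediate from $\fx\in A^\#$ for every $A\in\fx$; the harder inequality is handled cleanly by fixing $u\ll L$, producing the witnessing set $W=\{\fz\in UX\mid u\le a(\fz,x)\}$, and using the finite intersection property of $\{A^\#\mid A\in\fx\}\cup\{W\}$ to obtain an $\fX\in UUX$ with $m_X(\fX)=\fx$ and $W\in\fX$. The step $\xi(Ub(\fX))\le\Uxi a(\fX,\doo{x})$ via the map $\fz\mapsto(\fz,x)$ is valid, and the final chain through the transitivity axiom and reflexivity at $\doo{x}$ is exactly right.

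As for comparison with the paper: the paper does not reproduce a proof here but simply imports the result from \cite[Lemma~3.2 and Corollary~3.3]{HS11}. Your self-contained argument is in the expected spirit --- complete distributivity to reduce to $u\ll L$, then an ultrafilter extension on $UX$ to align $m_X(\fX)$ with $\fx$ while retaining the bound $u$, and finally the lax algebra axiom to push the bound down to $a(\fx,x)$. The only stylistic remark is that your appeal to the self-dual formula for $\xi$ could equally well be phrased via the primary formula $\xi(\fv)=\bigwedge_{C\in\fv}\bigvee C$, since $\upc u\in Ub(\fX)$ already gives $\xi(Ub(\fX))\ge u$ once one notes that every $C\in Ub(\fX)$ meets $\upc u$; but your version is equally valid.
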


Next we analyse left adjoint $\thU$-distributors $\varphi:G\kmodto X$.

\begin{lemma}\label{d:lem:5}
  Let $\thU$ be an ultrafilter theory and $\varphi:G\kmodto X$ be a left adjoint
  $\thU$-distributor with right adjoint $\psi:X\kmodto G$. Then, for every
  $\thU$-distributor $\varphi':G\kmodto X$,
  \[
    [\varphi,\varphi']:=\bigwedge_{x\in
      X}\hom(\varphi(x),\varphi'(x))=\psi\kleisli\varphi'.
  \]
\end{lemma}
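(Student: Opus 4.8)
The plan is to unwind both sides into elementwise formulas over $UX$ and then check the two inequalities separately, the easy one using the counit and the $\thU$-functoriality of $\varphi'$, and the hard one using the unit together with the lax continuity of $\otimes$ and monotonicity of $\xi\cdot U(-)$. First I would record the data. Since $G=(1,k)$ and $U1=1$, the Kleisli composite $\psi\kleisli\varphi'=\psi\cdot\Uxi\varphi'\cdot m_1^\circ$ collapses (as $m_1$ is the identity) to the single element $\psi\kleisli\varphi'=\bigvee_{\fz\in UX}\psi(\fz)\otimes\xi U\varphi'(\fz)$ of $\V$. The adjunction $\varphi\dashv\psi$ supplies the unit $k\le\bigvee_{\fz\in UX}\psi(\fz)\otimes\xi U\varphi(\fz)$ and the counit $\psi(\fx)\otimes\varphi(x)\le a(\fx,x)$; and both $\varphi,\varphi'$, being $\thU$-distributors $G\kmodto X$, are $\thU$-functors $X\to\V$, so in particular $a(\fz,x)\otimes\xi U\varphi'(\fz)\le\varphi'(x)$ for all $\fz\in UX$, $x\in X$. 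Writing $s:=[\varphi,\varphi']=\varphi'\blackleft\varphi$, the defining property of the internal hom in $\V^X$ is that $s$ is the largest element of $\V$ with $s\otimes\varphi(x)\le\varphi'(x)$ for all $x$.

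For the inequality $\psi\kleisli\varphi'\le[\varphi,\varphi']$ I would argue directly. For each $\fz\in UX$ and $x\in X$ the counit gives $\varphi(x)\otimes\psi(\fz)\le a(\fz,x)$, whence
\[
  \varphi(x)\otimes\psi(\fz)\otimes\xi U\varphi'(\fz)\le a(\fz,x)\otimes\xi U\varphi'(\fz)\le\varphi'(x)
\]
by $\thU$-functoriality of $\varphi'$. Taking the join over $\fz$ yields $\varphi(x)\otimes(\psi\kleisli\varphi')\le\varphi'(x)$, hence $\psi\kleisli\varphi'\le\hom(\varphi(x),\varphi'(x))$ for every $x$, i.e. $\psi\kleisli\varphi'\le[\varphi,\varphi']$.

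The reverse inequality is the main point. The key auxiliary fact I would isolate is: for $u\in\V$ and maps $t,t'\colon X\to\V$ with $u\otimes t(x)\le t'(x)$ for all $x$, one has $u\otimes\xi Ut(\fz)\le\xi Ut'(\fz)$ for every $\fz\in UX$. This I would obtain by pushing $\fz$ through $U\langle c_u,t\rangle$, where $c_u\colon X\to\V$ is constant at $u$: lax continuity of $\otimes$ and the algebra law $\xi(\doo u)=u$ give $u\otimes\xi Ut(\fz)=\xi(\doo u)\otimes\xi Ut(\fz)\le\xi U(u\otimes t(-))(\fz)$, and monotonicity of $\xi\cdot U(-)$ together with $u\otimes t(-)\le t'$ pointwise gives $\xi U(u\otimes t(-))(\fz)\le\xi Ut'(\fz)$. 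Applying this with $u=s$, $t=\varphi$, $t'=\varphi'$ (legitimate because $s\otimes\varphi\le\varphi'$ pointwise) yields $s\otimes\xi U\varphi(\fz)\le\xi U\varphi'(\fz)$ for all $\fz$. Multiplying the unit by $s$ and distributing over the join then gives
\[
  s=s\otimes k\le\bigvee_{\fz\in UX}\psi(\fz)\otimes\bigl(s\otimes\xi U\varphi(\fz)\bigr)\le\bigvee_{\fz\in UX}\psi(\fz)\otimes\xi U\varphi'(\fz)=\psi\kleisli\varphi',
\]
which is precisely $[\varphi,\varphi']\le\psi\kleisli\varphi'$. Combining the two inequalities gives the claim.

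The main obstacle is the monotonicity of $\xi\cdot U(-)$ invoked in the auxiliary fact, namely that pointwise $f\le g$ forces $\xi Uf(\fz)\le\xi Ug(\fz)$. This is the order-compatibility of the Eilenberg--Moore structure $\xi$ on the ordered set $\V$ and holds for every ultrafilter theory, specialising to the evident monotonicity of the formula in Theorem~\ref{thm:6}; the point to be careful about is that one should \emph{not} try to bootstrap it from the $\V$-functoriality of $\xi\colon U(\V,\hom)\to(\V,\hom)$ alone, since that route is circular (it would already require comparing $\xi$ on a constant map with $\xi$ on a dominating one). I would therefore treat this monotonicity as a structural property of $\xi$ and keep the rest of the argument, which is a clean manipulation of the unit, counit, lax continuity, and $\thU$-functoriality, as above.
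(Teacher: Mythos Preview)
Your argument is correct and follows essentially the same route as the paper: both directions are proved exactly as you outline, with the counit and the $\thU$-functoriality of $\varphi'$ giving $\psi\kleisli\varphi'\le[\varphi,\varphi']$, and the unit together with lax continuity of $\otimes$ and monotonicity of $\xi\cdot U(-)$ giving the reverse inequality. The paper compresses your auxiliary fact into the single chain $u\le\bigvee_{\fx}\psi(\fx)\otimes\xi U\varphi(\fx)\otimes u\le\bigvee_{\fx}\psi(\fx)\otimes\xi U(\varphi\otimes u)(\fx)\le\bigvee_{\fx}\psi(\fx)\otimes\xi U\varphi'(\fx)$, and treats the monotonicity step (pointwise $f\le g$ implies $\xi Uf(\fz)\le\xi Ug(\fz)$) as a standing property of the lax extension $\Uxi$, without further comment.
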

\begin{proof}
  Recall that $[\varphi,\varphi']=\varphi'\blackleft\varphi$ is the largest
  element $u\in\V$ with $\varphi(x)\otimes u\le\varphi'(x)$, for all $x\in X$
  (see Subsection~\ref{ssec:V-rel}). From
  \[
    \varphi(x)\otimes\bigvee_{\fx\in UX}\psi(\fx)\otimes \xi U\varphi'(\fx)
    =\bigvee_{\fx\in UX}\varphi(x)\otimes\psi(\fx)\otimes \xi U\varphi'(\fx)
    \le\bigvee_{\fx\in UX} a(\fx,x)\otimes \xi U\varphi'(\fx)=\varphi'(x)
  \]
  we get $\psi\kleisli\varphi'\le [\varphi,\varphi']$. On the other hand, from
  $\varphi\otimes u\le\varphi'$ we get
  \[
    u\le\bigvee_{\fx\in UX}\psi(\fx)\otimes\xi U\varphi(\fx)\otimes u \le
    \bigvee_{\fx\in UX}\psi(\fx)\otimes\xi U(\varphi\otimes u)(\fx) \le
    \bigvee_{\fx\in UX}\psi(\fx)\otimes\xi U\varphi'(\fx).\qedhere
  \]
\end{proof}

\begin{proposition}\label{prop:7}
  Let $\thU=\utheory$ be an ultrafilter theory where $\V$ is completely
  distributive, $\xi$ is as in Theorem~\ref{thm:6}, and $k$ is terminal.
  \begin{enumerate}
  \item For every left adjoint $\thU$-distributor $\varphi:G\kmodto X$,
    \[
      k\le \bigvee_{x\in X}\varphi(x).
    \]
  \item If $k$ is $\vee$-irreducible, then every left adjoint $\thU$-distributor
    $\varphi:G\kmodto X$ is irreducible (that is: $\varphi\neq\bot$ and
    $\varphi\leq \varphi_1\vee\varphi_2$ implies $\varphi\leq \varphi_1$ or
    $\varphi\leq \varphi_2$).
  \end{enumerate}
\end{proposition}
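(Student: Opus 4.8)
The plan is to handle the two parts separately, in both cases exploiting the elementwise description of the adjunction $\varphi\dashv\psi$ recorded just before Proposition~\ref{prop:3}, namely
\[
  k\le\bigvee_{\fz\in UX}\psi(\fz)\otimes\xi U\varphi(\fz)
  \qquad\text{and}\qquad
  \psi(\fx)\otimes\varphi(x)\le a(\fx,x).
\]

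For (1) I would start from the first adjunction inequality. Since $k$ is terminal it is the top element of $\V$, so every $\psi(\fz)\le k$ and hence $\psi(\fz)\otimes v\le k\otimes v=v$ for all $v\in\V$. It therefore suffices to bound $\xi U\varphi(\fz)$. Using the explicit formula of Theorem~\ref{thm:6}, $\xi U\varphi(\fz)=\bigwedge_{A\in U\varphi(\fz)}\bigvee_{u\in A}u$; the set $A=\varphi(X)$ lies in $U\varphi(\fz)$ (because $\varphi^{-1}(\varphi(X))=X\in\fz$), so it occurs among the terms of the infimum and gives $\xi U\varphi(\fz)\le\bigvee_{x\in X}\varphi(x)$. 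Chaining the bounds yields $k\le\bigvee_{\fz}\psi(\fz)\otimes\xi U\varphi(\fz)\le\bigvee_{\fz}\xi U\varphi(\fz)\le\bigvee_{x\in X}\varphi(x)$, which is the assertion.

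For (2), first note that $\varphi\neq\bot$ is immediate from (1): as $k$ is $\vee$-irreducible we have $k>\bot$, and $\bigvee_{x}\varphi(x)\ge k>\bot$ forces $\varphi\neq\bot$. For the splitting property I would reduce everything to the internal hom $[\varphi,\varphi']=\bigwedge_{x}\hom(\varphi(x),\varphi'(x))$ via the elementary equivalence $\varphi\le\varphi'\iff k\le[\varphi,\varphi']$ (valid because $k=\top$). Taking $\varphi_1,\varphi_2$ to be $\thU$-distributors $G\kmodto X$, as is implicit in the notion of irreducibility in the lattice of such distributors, the key is the identity
\[
  [\varphi,\varphi_1\vee\varphi_2]=[\varphi,\varphi_1]\vee[\varphi,\varphi_2].
\]
Granting this, the hypothesis $\varphi\le\varphi_1\vee\varphi_2$ gives $k\le[\varphi,\varphi_1]\vee[\varphi,\varphi_2]$, and $\vee$-irreducibility of $k$ yields $k\le[\varphi,\varphi_1]$ or $k\le[\varphi,\varphi_2]$, that is $\varphi\le\varphi_1$ or $\varphi\le\varphi_2$.

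To prove the displayed identity I would invoke Lemma~\ref{d:lem:5}, which for every $\thU$-distributor $\varphi'$ identifies $[\varphi,\varphi']=\psi\kleisli\varphi'=\bigvee_{\fz}\psi(\fz)\otimes\xi U\varphi'(\fz)$. The crucial input is Proposition~\ref{d:prop:1}: since $\V$ is completely distributive and $\xi$ is the map of Theorem~\ref{thm:6}, the theory is compatible with finite suprema, so the commuting $U\vee$-diagram applied to $U\langle\varphi_1,\varphi_2\rangle(\fz)$ gives $\xi U(\varphi_1\vee\varphi_2)(\fz)=\xi U\varphi_1(\fz)\vee\xi U\varphi_2(\fz)$. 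This same equality, together with $\varphi_i(x)=\bigvee_{\fz}a(\fz,x)\otimes\xi U\varphi_i(\fz)$, shows that $\varphi_1\vee\varphi_2$ is again a $\thU$-distributor, so that Lemma~\ref{d:lem:5} is applicable to it. Substituting into the formula for $[\varphi,-]$ and distributing $\otimes$ and the outer supremum over the binary join then produces the identity. The main obstacle I anticipate is precisely this bookkeeping: one must verify that finite suprema of $\thU$-distributors remain $\thU$-distributors and that $\xi\circ U(-)$ genuinely commutes with binary joins, both of which rest on the compatibility-with-finite-suprema property rather than on any naive preservation of suprema by $\Uxi$.
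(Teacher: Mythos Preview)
Your proposal is correct and follows essentially the same strategy as the paper: both parts rest on the adjunction inequalities and on Lemma~\ref{d:lem:5} identifying $[\varphi,-]$ with $\psi\kleisli(-)$. Your argument for (1) is in fact slightly more direct than the paper's: the paper picks $u\ll k$, finds an ultrafilter $\fx$ with $u\le\xi U\varphi(\fx)$, and then uses complete distributivity to conclude, whereas you observe uniformly that $\xi U\varphi(\fz)\le\bigvee_{x\in X}\varphi(x)$ for every $\fz$, which avoids the totally-below detour. For (2) you make explicit two points the paper leaves implicit, namely that $\varphi_1\vee\varphi_2$ is again a $\thU$-distributor and that $\xi\cdot U(-)$ commutes with binary joins by Proposition~\ref{d:prop:1}; this is exactly what is needed for Lemma~\ref{d:lem:5} to apply to $\varphi_1\vee\varphi_2$ and for the identity $[\varphi,\varphi_1\vee\varphi_2]=[\varphi,\varphi_1]\vee[\varphi,\varphi_2]$ to hold, so your extra bookkeeping is well placed.
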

\begin{proof}
  Regarding the first statement, first observe that
  \[
    k\le\bigvee_{\fx\in UX}\psi(\fx)\otimes\xi U\varphi(\fx)\le \bigvee_{\fx\in
      UX}\xi U\varphi(\fx).
  \]
  Let $u\ll k$. Then there is some $\fx\in UX$ with
  $u\le \xi U\varphi(\fx)=\bigwedge_{A\in \fx}\bigvee_{x\in A}\varphi(x)\le
  \bigvee_{x\in X}\varphi(x)$.

  Regarding the second statement, we observe first that $\varphi\neq\bot$ since
  \[
    \bot<k\le \bigvee_{x\in X}\varphi(x).
  \]
  Furthermore, by Lemma~\ref{d:lem:5}, $[\varphi,-]$ preserves finite
  suprema. Therefore, if $\varphi\leq \varphi_1\vee\varphi_2$, then
  \[
    k\le [\varphi,\varphi_1\vee\varphi_2]=[\varphi,\varphi_1]\vee[\varphi,\varphi_2].
  \]
  Since $k$ is $\vee$-irreducible, we conclude that $\varphi\leq \varphi_1$ or
  $\varphi\leq \varphi_2$.
\end{proof}

The following result is inspired by \cite[Lemma III.5.9.1]{HST14} which in turn
is motivated by \cite[Proposition 5.7]{BLO06}

\begin{proposition}\label{prop:8}
  Let $\thU=\utheory$ be an ultrafilter theory where $\V$ is completely
  distributive, $\xi$ is as in Theorem~\ref{thm:6}, and $k$ is terminal and
  approximated. Then every left adjoint $\thU$-distributor $\varphi:G\kmodto X$
  is of the form $\varphi=a(\fx,-)$, for some $\fx\in UX$.
\end{proposition}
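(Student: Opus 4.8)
The plan is to reduce the statement to producing a single ultrafilter $\fx\in UX$ at which \emph{both} $\psi$ and $\xi U\varphi$ attain the value $k$, and then to build this $\fx$ by a compactness argument using that $k$ is approximated. Writing the adjunction $\varphi\dashv\psi$ elementwise (as established above) we have the unit inequality $k\le\bigvee_{\fz\in UX}\psi(\fz)\otimes\xi U\varphi(\fz)$ and the counit inequality $\psi(\fz)\otimes\varphi(x)\le a(\fz,x)$, for all $\fz\in UX$ and $x\in X$. The point of the reduction is that, once we have $\fx$ with $\psi(\fx)=k=\xi U\varphi(\fx)$, both halves of the claim follow at once: the counit inequality gives $\varphi(x)=k\otimes\varphi(x)\le a(\fx,x)$, so $\varphi\le a(\fx,-)$; and the pointwise description of a $\thU$-distributor $G\kmodto X$, namely $\varphi(x)=\bigvee_{\fz\in UX}a(\fz,x)\otimes\xi U\varphi(\fz)$, gives $\varphi(x)\ge a(\fx,x)\otimes\xi U\varphi(\fx)=a(\fx,x)$, so $a(\fx,-)\le\varphi$. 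Hence $\varphi=a(\fx,-)$.

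To find $\fx$, for each $u\ll k$ I would consider
\[
  S_u=\{\fz\in UX\mid \psi(\fz)\ge u\ \text{and}\ \xi U\varphi(\fz)\ge u\}.
\]
Each $S_u$ is non-empty: since $u\ll k\le\bigvee_{\fz}\psi(\fz)\otimes\xi U\varphi(\fz)$, the definition of the totally-below relation yields some $\fz$ with $u\le\psi(\fz)\otimes\xi U\varphi(\fz)$, and as $k$ is the top element we have $\psi(\fz)\otimes\xi U\varphi(\fz)\le\psi(\fz)\wedge\xi U\varphi(\fz)$, so $\fz\in S_u$. Because $\Dnw k$ is directed, the family $\{S_u\}_{u\ll k}$ has the finite intersection property (given $u_1,u_2\ll k$, choose $u\ll k$ above both and note $S_u\subseteq S_{u_1}\cap S_{u_2}$). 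As $UX$ is compact, \emph{provided each $S_u$ is closed} we obtain some $\fx\in\bigcap_{u\ll k}S_u$. For this $\fx$ we then have $\psi(\fx)\ge u$ and $\xi U\varphi(\fx)\ge u$ for all $u\ll k$, hence $\psi(\fx)\ge\bigvee\Dnw k=k$ and $\xi U\varphi(\fx)\ge k$; since $k$ is terminal, both values equal $k$, exactly as needed.

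The crux, and the step I expect to be the main obstacle, is the closedness of each $S_u$, which is an upper-semicontinuity statement for $\psi$ and for $\xi U\varphi$. The set $\{\fz\mid\xi U\varphi(\fz)\ge u\}$ is the preimage of $\upc u$ under $\xi U\varphi:UX\to\V$, which is continuous as the composite of $U\varphi$ with the algebra structure $\xi$, and $\upc u$ is closed in $\V$ because $\xi$ is given by the formula of Theorem~\ref{thm:6}; so this set is closed. For $C=\{\fz\mid\psi(\fz)\ge u\}$, recall that $C$ is closed in $UX=(UX,m_X)$ precisely when $m_X(\fX)\in C$ for every $\fX\in UUX$ with $C\in\fX$. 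Given such an $\fX$, the distributor condition on $\psi:X\kmodto G$ reads $\Uxi\psi(\fX)\le\psi(m_X(\fX))$, and $\Uxi\psi(\fX)=\xi U\psi(\fX)$ is the limit in $\V$ of the pushforward ultrafilter $U\psi(\fX)$; since $C\in\fX$ forces $\upc u\in U\psi(\fX)$ and $\upc u$ is closed, this limit lies in $\upc u$, whence $\psi(m_X(\fX))\ge u$ and $m_X(\fX)\in C$. This establishes closedness of both sets and completes the argument. I would note that this route uses compactness of $UX$ in an essential way precisely when $k$ is not attained from strictly below (as happens for the metric quantales), which is exactly the role of the approximation hypothesis.
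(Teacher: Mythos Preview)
Your proof is correct and takes a genuinely different route from the paper's. The paper works ``downstairs'' in $X$: for each $u\ll k$ it considers $A_u=\{x\in X\mid u\le\varphi(x)\}$, uses irreducibility of $\varphi$ (Proposition~\ref{prop:7}) to show $\varphi\le\varphi_{A_u}$, then applies the filter/ideal separation lemma to produce $\fx\in UX$ containing every $A_u$ and disjoint from $\{B\subseteq X\mid\varphi\not\le\varphi_B\}$, finally invoking the formula $a(\fx,-)=\bigwedge_{A\in\fx}\varphi_A$ of Proposition~\ref{prop:6}. Your argument works ``upstairs'' in $UX$: you exploit directly the compact Hausdorff structure of the free algebra $(UX,m_X)$ and verify upper-semicontinuity of $\psi$ (via the distributor axiom $\Uxi\psi(\fX)\le\psi(m_X(\fX))$) and of $\xi U\varphi$ (genuine continuity into $(\V,\xi)$), then intersect the closed sets $S_u$. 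Your route is more topological and self-contained---it bypasses Propositions~\ref{prop:6} and~\ref{prop:7} entirely and uses the right adjoint $\psi$ explicitly---whereas the paper's route is more combinatorial, trading the compactness argument on $UX$ for Stone-style filter extension on $X$. Both hinge on the approximation hypothesis in exactly the same way, to pass from ``$\ge u$ for all $u\ll k$'' to ``$\ge k$''.
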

\begin{proof}
  First note that from $\{u\in\V\mid u\ll k\}$ is directed it follows that $k$
  is $\vee$-irreducible (see \cite[Remark 4.21]{HR13}).  For every $u\ll k$, put
  $A_u=\{x\in X\mid u\le\varphi(x)\}$. By hypothesis, $A_u\neq\varnothing$. We
  claim that $\varphi\le\varphi_{A_u}$. To see this, put
  $A=\{x\in X\mid\varphi(x)\le \varphi_{A_u}(x)\}$. Since $\varphi_{A_u}(x)=k$
  for every $x\in A_u$, it follows that $A_u\subseteq A$. Put
  $v=\bigvee\{\varphi(x)\mid x\notin A\}$, then $k\not\le v$ since $u\ll v$. By
  construction, $\varphi\le \varphi_{A_u}\vee v$. But $\varphi\le v$ is
  impossible since $k\le\bigvee_{x\in X}\varphi(x)$ and $k\not\le v$, hence
  $\varphi\le\varphi_{A_u}$.

  The directed set $\ff=\{A_u\mid u\ll k\}$ is disjoint from the ideal
  $\fj=\{B\subseteq X\mid \varphi\not\le\varphi_B\}$, hence there is some
  ultrafilter $\fx\in UX$ with $\ff\subseteq\fx$ and
  $\fx\cap\fj=\varnothing$. Therefore
  \[
    \varphi\le\bigwedge_{A\in\fx}\varphi_A=a(\fx,-)
  \]
  and
  \[
    \varphi(x)\ge a(\fx,x)\otimes \xi U\varphi(\fx)\ge a(\fx,x),
  \]
  for all $x\in X$.
\end{proof}

\begin{corollary}\label{d:cor:1}
  Under the conditions of Proposition~\ref{prop:8}, every $\thU$-category in the
  image of
  \[
    K:(\Cats{\V})^\mU \longrightarrow\Cats{\thU}
  \]
  is Cauchy complete. In particular, the $\thU$-category $\V$ is Cauchy
  complete.
\end{corollary}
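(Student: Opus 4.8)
The plan is to combine Proposition~\ref{prop:8} with the Eilenberg--Moore unit law and the uniqueness of right adjoints (Proposition~\ref{prop:2}). Fix a $\V$-categorical compact Hausdorff space $(X,a_0,\alpha)$ and write $KX=(X,a)$, so that $a=a_0\cdot\alpha$; in pointwise terms, $a(\fx,x)=a_0(\alpha(\fx),x)$ for all $\fx\in UX$ and $x\in X$. Given an adjunction $\varphi\dashv\psi$ with $\varphi:G\kmodto X$ and $\psi:X\kmodto G$, I must produce a point $x_0\in X$ with $\varphi=(x_0)_\Tast$ and $\psi=(x_0)^\Tast$.

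First I would apply Proposition~\ref{prop:8}: since $\varphi$ is a left adjoint $\thU$-distributor and the hypotheses of that proposition are precisely the ones assumed here, there is some $\fx\in UX$ with $\varphi=a(\fx,-)$. The crucial observation is that this ultrafilter $\fx$ can now be collapsed to a genuine point by means of the algebra structure. Setting $x_0=\alpha(\fx)$, I compute $\varphi(x)=a(\fx,x)=a_0(\alpha(\fx),x)=a_0(x_0,x)$, whereas the Eilenberg--Moore unit law $\alpha\cdot e_X=1_X$ gives $\alpha(\doo{x_0})=\alpha(e_X(x_0))=x_0$, so that $(x_0)_\Tast(x)=a(\doo{x_0},x)=a_0(x_0,x)$. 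Hence $\varphi=(x_0)_\Tast$. Since any point $x_0:G\to X$ is automatically a $\thU$-functor, the adjunction $(x_0)_\Tast\dashv(x_0)^\Tast$ holds; thus $\psi$ and $(x_0)^\Tast$ are both right adjoint to $\varphi=(x_0)_\Tast$, and Proposition~\ref{prop:2} forces $\psi=(x_0)^\Tast$. Therefore every adjunction $\varphi\dashv\psi$ in $KX$ is representable, which is exactly Cauchy completeness in the sense of Definition~\ref{def:5}.

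For the final assertion I would recall that $\V=(\V,\hom,\xi)$ is itself a $\V$-categorical compact Hausdorff space, so $K\V=(\V,\hom_\xi)$ lies in the image of $K$ and is Cauchy complete by the first part. The main conceptual work has already been discharged in Proposition~\ref{prop:8}, and the collapse $\fx\mapsto\alpha(\fx)$ together with the unit law is a short calculation. The only genuine subtlety is the two-sided nature of the adjunction: one must recover the right adjoint $\psi$ as well, and this is exactly where uniqueness of right adjoints (Proposition~\ref{prop:2}) is indispensable. Because composition of $\thU$-distributors need not be associative, one cannot argue symmetrically for left adjoints; but the right adjoint is pinned down, and representability of $\varphi$ then drags $\psi$ along, which suffices.
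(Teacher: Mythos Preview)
Your proof is correct and follows exactly the paper's approach: apply Proposition~\ref{prop:8} to obtain $\varphi=a(\fx,-)=a_0(\alpha(\fx),-)$, then recognise this as $(x_0)_\Tast$ for $x_0=\alpha(\fx)$. The paper's proof is a one-line version of yours; your explicit invocation of the unit law $\alpha\cdot e_X=1_X$ and of Proposition~\ref{prop:2} for the right adjoint are details the paper leaves implicit.
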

\begin{proof}
  Given a left adjoint $\thU$-distributor $\varphi:G\kmodto X$, we have
  $\varphi=a(\fx,-)=a_0(\alpha(\fx),-)$.
\end{proof}

For our next result, we recall that the forgetful functor
$(-)_0:\Cats{\thU}\to\Cats{\V}$ has a left adjoint $F:\Cats{\V}\to\Cats{\thU}$
sending a $\V$-category $(X,a_0)$ to the $\thU$-category
$(X,e_X^\circ\cdot\Uxi a_0)$, and leaving maps unchanged.

\begin{proposition}
  Let $\thU$ be an ultrafilter theory. Then the following assertions hold.
  \begin{enumerate}
  \item $F$ sends fully faithful $\V$-functors to fully faithful
    $\thU$-functors.
  \item If $\thU$ is strict, then $F$ sends fully dense $\V$-functors to fully
    dense $\thU$-functors.
  \end{enumerate}
\end{proposition}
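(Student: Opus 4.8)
The plan is to evaluate the two Kleisli composites governing full faithfulness and full density of the $\thU$-functor $f\colon F(X,a_0)\to F(Y,b_0)$, where the free structures are $A=e_X^\circ\cdot\Uxi a_0$ and $B=e_Y^\circ\cdot\Uxi b_0$, and then to feed in the $\V$-categorical hypotheses $f^*\cdot f_*=a_0$ (fully faithful) and $f_*\cdot f^*=b_0$ (fully dense). I would start from the identities already established for $f_\Tast\dashv f^\Tast$, namely
\[
  f^\Tast\kleisli f_\Tast=f^\circ\cdot B\cdot Uf
  \qquad\text{and}\qquad
  f_\Tast\kleisli f^\Tast=B\cdot Uf\cdot (Uf)^\circ\cdot\Uxi B\cdot m_Y^\circ,
\]
and use the calculus of Theorem~\ref{thm:1}. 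The one computational lemma I would isolate first is that $\Uxi$ splits off (opposite) graphs of functions with no strictness: combining item~(1) with item~(3) gives $\Uxi(g^\circ\cdot r)=(Ug)^\circ\cdot\Uxi r$ and $\Uxi(r\cdot g^\circ)=\Uxi r\cdot(Ug)^\circ$ for every function $g$, and dually for $g$ itself. This is exactly what lets graph-maps be absorbed into a single extension.

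For part~(1) I would substitute $B=e_Y^\circ\cdot\Uxi b_0$ and rewrite using the naturality of $e$ in the form $f^\circ\cdot e_Y^\circ=e_X^\circ\cdot(Uf)^\circ$:
\[
  f^\Tast\kleisli f_\Tast=e_X^\circ\cdot(Uf)^\circ\cdot\Uxi b_0\cdot Uf.
\]
Now the splitting lemma absorbs the two graph-maps into the central $\Uxi$ exactly, giving $f^\Tast\kleisli f_\Tast=e_X^\circ\cdot\Uxi(f^\circ\cdot b_0\cdot f)=e_X^\circ\cdot\Uxi(f^*\cdot f_*)$. Full faithfulness of $f$ in $\Cats{\V}$ turns the argument into $a_0$, whence $f^\Tast\kleisli f_\Tast=e_X^\circ\cdot\Uxi a_0=A$. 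Notice that no independent product of two extensions ever arises, so this is an \emph{exact} equality valid for an arbitrary ultrafilter theory; no strictness is needed.

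For part~(2) I would first collapse the tail $\Uxi B\cdot m_Y^\circ$. Splitting off the opposite graph $e_Y^\circ$ gives $\Uxi B=(Ue_Y)^\circ\cdot\Uxi\Uxi b_0$; the strict naturality of $m$ (item~(5) applied to $b_0^\circ$ and transposed through item~(1)) yields the Beck--Chevalley identity $\Uxi\Uxi b_0\cdot m_Y^\circ=m_Y^\circ\cdot\Uxi b_0$; and the monad unit law $m_Y\cdot Ue_Y=1_{UY}$ reduces $(Ue_Y)^\circ\cdot m_Y^\circ$ to the identity. Hence $\Uxi B\cdot m_Y^\circ=\Uxi b_0$, and the general formula becomes
\[
  f_\Tast\kleisli f^\Tast=e_Y^\circ\cdot\Uxi b_0\cdot Uf\cdot (Uf)^\circ\cdot\Uxi b_0.
\]
The splitting lemma first contracts the two graph-maps, producing $e_Y^\circ\cdot\Uxi(b_0\cdot f)\cdot\Uxi(f^\circ\cdot b_0)$; invoking the \emph{equality} case of item~(4) (strictness) then fuses the two extensions into $\Uxi(b_0\cdot f\cdot f^\circ\cdot b_0)=\Uxi(f_*\cdot f^*)$, which full density turns into $\Uxi b_0$. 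Therefore $f_\Tast\kleisli f^\Tast=e_Y^\circ\cdot\Uxi b_0=B$, so $f$ is fully dense.

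The crux, and the only point where strictness is genuinely used, is this last fusion step in part~(2): two independent extensions $\Uxi(b_0\cdot f)$ and $\Uxi(f^\circ\cdot b_0)$ must be merged into a single $\Uxi(b_0\cdot f\cdot f^\circ\cdot b_0)$, which is precisely the equality in Theorem~\ref{thm:1}(4). For a non-strict theory the lax extension would deliver only the inequality $f_\Tast\kleisli f^\Tast\le e_Y^\circ\cdot\Uxi(f_*\cdot f^*)$, i.e.\ the direction one already has for free from $f_\Tast\kleisli f^\Tast\le B$, so this is exactly where the hypothesis is indispensable. Everything else---naturality of $e$ and $m$, the unit law, and the splitting of $\Uxi$ across graphs---remains available for every ultrafilter theory, which is why part~(1) goes through unconditionally.
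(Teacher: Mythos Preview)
Your proposal is correct and follows essentially the same route as the paper: both compute $f^\Tast\kleisli f_\Tast$ and $f_\Tast\kleisli f^\Tast$ by substituting $B=e_Y^\circ\cdot\Uxi b_0$, passing $e$ across $f$ by naturality, collapsing $\Uxi\Uxi b_0\cdot m_Y^\circ$ via the naturality of $m$ and the unit law, and then absorbing the graph-maps into a single $\Uxi$. The only difference is expository: you isolate explicitly the fusion step $\Uxi(b_0\cdot f)\cdot\Uxi(f^\circ\cdot b_0)=\Uxi(b_0\cdot f\cdot f^\circ\cdot b_0)$ as the unique place strictness is invoked, whereas the paper's final ``$=b$'' leaves this implicit.
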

\begin{proof}
  For a $\V$-functor $f:(X,a_0)\to(Y,b_0)$, we write
  \begin{align*}
    a=e_X^\circ\cdot\Uxi a_0 &&\text{and}&& b=e_Y^\circ\cdot\Uxi b_0
  \end{align*}
  for the corresponding $\thU$-structures. Assume first that
  $f:(X,a_0)\to(Y,b_0)$ is fully faithful. Then
  \[
    f^\Tast\kleisli f_\Tast =f^\circ \cdot e_Y^\circ\cdot\Uxi b_0\cdot Uf
    =e_X^\circ\cdot Uf^\circ\cdot\Uxi b_0\cdot Uf =e_X^\circ\cdot
    \Uxi(f^\circ\cdot b_0\cdot f) =a
  \]
  Assume now that $\thU$ is strict and $f$ is fully dense. Now we calculate:
  \begin{multline*}
    f_\Tast\kleisli f^\Tast =b\cdot Uf\cdot Uf^\circ\cdot \Uxi b\cdot m_Y^\circ
    =e_Y^\circ\cdot\Uxi b_0\cdot Uf\cdot Uf^\circ\cdot Ue_Y^\circ\cdot\Uxi\Uxi b_0\cdot m_Y^\circ\\
    =e_Y^\circ\cdot\Uxi b_0\cdot Uf\cdot Uf^\circ\cdot Ue_Y^\circ\cdot
    m_Y^\circ\cdot\Uxi b_0 =e_Y^\circ\cdot\Uxi b_0\cdot Uf\cdot Uf^\circ\cdot
    \Uxi b_0=b\qedhere
  \end{multline*}
\end{proof}

\begin{theorem}
  Let $\thU$ be a strict ultrafilter theory. Then
  $(-)_0:\Cats{\thU}\to\Cats{\V}$ sends Cauchy complete $\thU$-categories to
  Cauchy complete $\V$-categories.
\end{theorem}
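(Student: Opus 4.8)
The plan is to start from a Cauchy complete $\thU$-category $(X,a)$, form its underlying $\V$-category $(X,a_0)$ with $a_0 = a\cdot e_X$, and show that every adjunction $\varphi_0\dashv\psi_0$ of $\V$-distributors $X\modto G$ in $\Cats{\V}$ is representable. The natural strategy is to manufacture from $\varphi_0,\psi_0$ a corresponding pair of $\thU$-distributors $\varphi\dashv\psi$ of type $G\kmodto X$ and $X\kmodto G$, apply Cauchy completeness of $(X,a)$ to obtain $x\in X$ with $\varphi=x_\Tast$ and $\psi=x^\Tast$, and then verify that this same $x$ represents the original $\V$-adjunction. The obvious candidates are the mates under the free-forgetful adjunction $F\dashv(-)_0$: since $F(X,a_0)=(X,e_X^\circ\cdot\Uxi a_0)$ and $F$ leaves underlying sets and maps untouched, a $\V$-distributor into $G$ should lift along $F$ to a $\thU$-distributor, and conversely a $\thU$-distributor restricts along $e_X$ to a $\V$-relation.

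\smallskip

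\noindent\textbf{Carrying out the lift.} First I would fix $\varphi_0:G\modto X$ and $\psi_0:X\modto G$ in $\Cats{\V}$ with $\varphi_0\dashv\psi_0$, so that $a_0\le\psi_0\cdot\varphi_0$ and $\varphi_0\cdot\psi_0\le k$ (the structure on $G$). I would then set $\psi:=\psi_0\cdot e_X^\circ:X\krelto G$ and $\varphi:=\varphi_0$ viewed through $e_X$, and check that these are genuine $\thU$-distributors of type $X\kmodto G$ and $G\kmodto X$ using the characterisations from Subsection~\ref{sec:adjoint-distributors}. The key point is that strictness of $\thU$ guarantees associativity of Kleisli composition (Theorem~\ref{thm:2}) and genuine composition of $\thU$-distributors (Remark~\ref{rem:1}), so the adjunction inequalities transport. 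Concretely, I expect $\psi\kleisli\varphi$ and $\varphi\kleisli\psi$ to compute out, via $\Uxi(s\cdot r)=\Uxi s\cdot\Uxi r$ (Theorem~\ref{thm:1}(4), equality under strictness), to expressions governed by $\psi_0\cdot\varphi_0$ and $\varphi_0\cdot\psi_0$, from which $a\le\psi\kleisli\varphi$ and $\varphi\kleisli\psi\le k$ follow from the corresponding $\V$-inequalities together with the unit/counit laws of $\mU$.

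\smallskip

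\noindent\textbf{Pulling the representation back down.} Once $\varphi\dashv\psi$ is established in $\Cats{\thU}$, Cauchy completeness of $(X,a)$ yields $x\in X$ with $\varphi=x_\Tast=a(\doo{x},-)$ and $\psi=x^\Tast=a(-,x)$. Restricting along $e_X$ gives $a(\doo{x},-)\cdot e_X = a_0(x,-)$ and $a(-,x)\cdot e_X = a_0(-,x)$, so that $\varphi$ and $\psi$ restrict to $x_*$ and $x^*$ in $\Cats{\V}$. It remains to see that the mate construction is inverse to restriction on the nose, i.e.\ that the original $\varphi_0,\psi_0$ coincide with the restrictions of $\varphi,\psi$; this is where I would use uniqueness of adjoints. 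Since right adjoints of a $\V$-distributor are unique and $x_*\dashv x^*$ while $\varphi_0\dashv\psi_0$, matching up the two adjunctions (via the faithfulness of the lift and the fact that $\varphi_0$ and the restriction of $\varphi$ are both left adjoint to the same $\psi_0$) forces $\varphi_0=x_*$ and $\psi_0=x^*$.

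\smallskip

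\noindent\textbf{Anticipated obstacle.} The main difficulty is verifying that the lifted pair $\varphi,\psi$ really satisfies the $\thU$-adjunction inequalities rather than merely the $\V$-ones; this is precisely the step where strictness is indispensable, since without equality in $\Uxi(s\cdot r)=\Uxi s\cdot\Uxi r$ and without associativity of Kleisli composition the composites $\psi\kleisli\varphi$ and $\varphi\kleisli\psi$ cannot be related cleanly to $\psi_0\cdot\varphi_0$ and $\varphi_0\cdot\psi_0$. A secondary subtlety is ensuring that restriction along $e_X$ and the lift are mutually inverse bijections between the relevant adjunctions; I would handle this by exploiting that $F\dashv(-)_0$ with $F$ fully faithful on the relevant distributors, so that the unit of this adjunction is an isomorphism on the objects in play, making the correspondence $\varphi_0\leftrightarrow\varphi$ a bijection that preserves and reflects adjunctions.
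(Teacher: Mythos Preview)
Your direct lifting strategy has a genuine gap: the candidates $\varphi:=\varphi_0$ and $\psi:=\psi_0\cdot e_X^\circ$ are in general \emph{not} $\thU$-distributors. For $\psi$, note that $\psi(\fx)=\bot$ for every non-principal $\fx$, whereas the condition $\psi\kleisli a\le\psi$ forces $\hat{a}(\fx,\doo{y})\otimes\psi_0(y)\le\psi(\fx)$; already for $\V=\two$ and $X=[0,1]$ with the usual topology, taking $\psi_0=\chi_{\{0\}}$ and any non-principal $\fx\to 0$ gives $\hat{a}(\fx,\doo{0})=1$ while $\psi(\fx)=0$. For $\varphi$, being a $\thU$-distributor $G\kmodto X$ means that $\varphi_0:X\to\V$ is a $\thU$-functor, which is strictly stronger than being a $\V$-functor $X_0\to\V$; your reference to ``the characterisations from Subsection~\ref{sec:adjoint-distributors}'' does not bridge this. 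Strictness of $\thU$ helps with associativity of Kleisli composition, but it does not upgrade $\V$-distributors on $X_0$ to $\thU$-distributors on $X$. One could try to repair the argument by passing to the closure $\overline{\varphi_0}$ of Proposition~\ref{d:prop:2}, but then you must still produce a right adjoint for $\overline{\varphi_0}$ and relate its representing point back to $\varphi_0$; this is essentially the content of Corollary~\ref{d:cor:2} and requires further hypotheses on $\V$ that the present theorem does not assume.

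The paper takes a different and shorter route: it invokes the characterisation (from \cite{HT10}) of Cauchy completeness as injectivity with respect to fully faithful and fully dense functors, both in $\Cats{\V}$ and in $\Cats{\thU}$. The preceding proposition shows that the left adjoint $F:\Cats{\V}\to\Cats{\thU}$ of $(-)_0$ preserves fully faithful functors always and fully dense functors when $\thU$ is strict. Given a fully faithful fully dense $\V$-functor $f:A\to B$ and a $\V$-functor $g:A\to X_0$, one transports $g$ across the adjunction $F\dashv(-)_0$ to a $\thU$-functor $FA\to X$, extends along the fully faithful fully dense $Ff:FA\to FB$ by injectivity of $X$, and transports back. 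This avoids entirely the problem of lifting individual distributors.
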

\begin{proof}
  Just note that a $\V$-category (resp.~$\thU$-category) is Cauchy complete if
  and only if it is injective with respect to fully faithful and fully dense
  $\V$-functors (resp.~$\thU$-functors) as it was proven in \cite[Theorems~3.10
  and 5.11]{HT10}.
\end{proof}

\begin{corollary}\label{d:cor:3}
  Let $\thU=\utheory$ be a strict ultrafilter theory where $\V$ is completely
  distributive, $\xi$ is as in Theorem~\ref{thm:6}, and $k$ is terminal and
  approximated. Then, for every $(X,a_0,\alpha)$ in $(\Cats{\V})^\mU$, the
  $\V$-category $(X,a_0)$ is Cauchy complete. In particular, every compact
  separated $\V$-category is Cauchy complete.
\end{corollary}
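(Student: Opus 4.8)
The plan is to obtain this statement by threading together the two substantial results just proved. For the first assertion, let $(X,a_0,\alpha)$ be an object of $(\Cats{\V})^\mU$. I first observe that the standing hypotheses here---$\V$ completely distributive, $\xi$ as in Theorem~\ref{thm:6}, and $k$ terminal and approximated---are precisely those demanded by Proposition~\ref{prop:8}, so Corollary~\ref{d:cor:1} applies and tells us that the $\thU$-category $K(X,a_0,\alpha)$ is Cauchy complete. Next, since $\thU$ is assumed \emph{strict}, the preceding theorem asserting that $(-)_0:\Cats{\thU}\to\Cats{\V}$ preserves Cauchy completeness applies, so $(K(X,a_0,\alpha))_0$ is a Cauchy complete $\V$-category. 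Finally I invoke the commuting triangle~\eqref{d:eq:2}, which identifies $(-)_0\cdot K$ with $G^\mU$; hence $(K(X,a_0,\alpha))_0=(X,a_0)$, and the first assertion follows.

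For the ``in particular'' clause, the task is to exhibit every compact separated $\V$-category as the underlying $\V$-category of some $\mU$-algebra, and then apply the first part. Here I would first note that $k$ approximated forces $k$ to be $\vee$-irreducible by Proposition~\ref{d:prop:4}, and, by the proposition computing the $\Cats{\V}$-topology from symmetric open balls, that $k$ approximated also makes $T_\V$ monoidal. With complete distributivity and $k$ terminal already in hand, all hypotheses of the lifting theorem are met, so that theorem provides the factorisation $T_\thU:\Cats{\V}_{\sep,\comp}\to(\Cats{\V})^\mU$; concretely, a compact separated $(X,a_0)$ becomes a $\V$-categorical compact Hausdorff space when equipped with the convergence of $T_\V(X)$. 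The first part then yields its Cauchy completeness.

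Since the corollary is a synthesis, I do not expect a genuine obstacle: the conceptually hard steps were already carried out in Proposition~\ref{prop:8} (representing each left adjoint $\thU$-distributor $\varphi:G\kmodto X$ as $a(\fx,-)$ for some ultrafilter $\fx$) and in the strictness-dependent descent along $(-)_0$. The only point requiring care is hypothesis bookkeeping---checking that strictness is used exactly for the $(-)_0$-step, while complete distributivity together with terminal-and-approximated $k$ powers the $K$-step and the monoidality of $T_\V$---and confirming that these requirements are indeed simultaneously available under the stated assumptions.
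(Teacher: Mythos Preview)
Your proposal is correct and follows exactly the route the paper intends: combine Corollary~\ref{d:cor:1} (Cauchy completeness of $K(X,a_0,\alpha)$) with the preceding theorem (preservation of Cauchy completeness by $(-)_0$ under strictness) via the commuting triangle~\eqref{d:eq:2}, and then derive the ``in particular'' clause from the lifting $T_\thU:\Cats{\V}_{\sep,\comp}\to(\Cats{\V})^\mU$, using that $k$ approximated yields both $\vee$-irreducibility and monoidality of $T_\V$. Your hypothesis bookkeeping is accurate; the paper states the corollary without proof precisely because it is this straightforward concatenation.
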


For $\thU=\thU_\two$, the result above is vacuous since every ordered set is
Cauchy complete. As we already pointed out in Section~\ref{sec:introduction}, a
stronger result holds in this case: the underlying order of a sober space is
codirected complete. In the next subsection we proof a similar result for
$\thU$-categories, under additional conditions on the quantale $\V$.

\begin{remark}
  A related study of properties of metric spaces via approach spaces can be
  found in \cite{LZ16}. Among other results, it is shown there that in the
  underlying metric of an approach space every \emph{forward Cauchy sequence}
  converges (see \cite{BBR98,Wag97}). We will come back to this notion in the
  next subsection.
\end{remark}

\subsection{Codirected complete $\V$-categories}
\label{sec:codir-compl-v}

In this subsection we look at Cauchy completeness of $\V$-categories from a
different perspective, namely as (co)completeness with respect to some choice of
(co)limit weights. In this paper we need only very particular limits and
colimits, therefore we refer for more information to \cite{KS05,Stu14} and
recall here only what we believe is essential for our paper.

As the starting point, we assume that a saturated class $\Phi$ of limit weights
$\varphi:G\modto X$ is given; examples of such choice are given below. For each
$\V$-category $X$, we write $\Phi(X)$ to denote the weights with codomain
$X$. Moreover, we consider $\Dists{\V}(G,X)$ as a $\V$-subcategory of
$\Rels{\V}(1,X)\simeq \V^X$ and $\Phi(X)$ as a $\V$-subcategory of $\Dists{\V}(G,X)^\op$,
this way the mapping
\[
  \coyoneda_X^\Phi:X\longrightarrow\Phi(X),\,x\longmapsto x_*
\]
is a $\V$-functor. A $\V$-category $X$ is called \df{$\Phi$-complete} whenever
$\coyoneda_X^\Phi$ has a right adjoint
\[
  {\inf}^\Phi_X:\Phi(X) \longrightarrow X.
\]
Intuitively, ${\inf}^\Phi_X$ calculates the infimum of a limit weight
$\varphi:G\modto X$. The assumption that $\Phi$ is saturated guarantees that
each $\Phi(X)$ is $\Phi$-complete; in fact, it is the free $\Phi$-completion of
$X$. Dually, notions of cocompleteness depend on a choice of a saturated class
$\Psi$ of colimit weights $\psi:X\modto G$. Then a $\V$-category $X$ is
$\Psi$-cocomplete if and only if the $\V$-functor
\[
  X\longrightarrow\Psi(X),\,x \longmapsto x^*
\]
has a left adjoint. Here we consider $\Psi(X)$ as a $\V$-subcategory of
$\Dists{\V}(X,G)$.

\begin{remark}
  For a saturated class $\Phi$ of limit weights, a $\V$-category $X$ is
  $\Phi$-complete if and only if there exists a $\V$-functor $I:\Phi(X)\to X$
  with $I\coyoneda_X^\Phi\simeq 1_X$; such a $\V$-functor is necessarily right
  adjoint to $\coyoneda_X^\Phi$.
\end{remark}

For instance,
\[
  \Phi=\{\text{all left adjoint $\V$-distributors $\varphi:G\modto X$ with
    domain $G$}\}
\]
is a saturated class of limit weights, and a $\V$-category $X$ is
$\Phi$-complete if and only if $X$ is Cauchy complete. The following definition
provides another important example of a saturated class of limit weights.

\begin{definition}\label{d:def:1}
  Let $\V$ be a quantale. A $\V$-distributor $\varphi_0:G\modto X$ with domain
  $G$ is called \df{codirected} whenever the $\V$-functor
  \[
    [\varphi_0,-]:\Dists{\V}(G,X)\longrightarrow\V
  \]
  preserves finite suprema and tensors; that is, for all
  $\varphi,\varphi':G\modto X$ and all $u\in\V$,
  \begin{align*}
    [\varphi_0,\bot]&=\bot,
    & [\varphi_0,\varphi\vee\varphi']&=[\varphi_0,\varphi]\vee[\varphi_0,\varphi']
    &  [\varphi_0,u\otimes\varphi]&=u\otimes[\varphi_0,\varphi].
  \end{align*}
\end{definition}

We note that the class $\DirLimCls$ of all codirected $\V$-distributors
$\varphi:G\modto X$ is saturated (see \cite{KS05}).

\begin{definition}\label{d:def:2}
  A $\V$-category $X$ is called \df{codirected complete} whenever $X$ is
  $\DirLimCls$-complete.
\end{definition}

For a left adjoint $\V$-distributor $\varphi:G\modto X$ with right adjoint
$\psi:X\modto G$, we have
\[
  [\varphi,-]=\psi\cdot-
\]
since $\varphi\cdot-\dashv\psi\cdot-$ and $\varphi\cdot-\dashv[\varphi,-]$;
which shows that $\varphi:G\modto X$ is codirected. Therefore every codirected
complete $\V$-category is Cauchy complete.

\begin{example}
  For $\V=\two$, we can interpret every $\two$-distributor $\varphi:G\modto X$
  as an upclosed subset $A\subseteq X$ of $X$. Then $A$ is codirected in the
  sense of Definition~\ref{d:def:1} if and only if $A$ is codirected in the
  usual sense; that is, $A\neq\varnothing$ and, for all $x,y\in A$, there is
  some $z\in A$ with $z\leq x$ and $z\leq y$.
\end{example}

We recall now that, by Theorem~\ref{thm:3}, $\thU$-distributors of type
$G\kmodto X$ correspond to $\thU$-functors $X\to\V$; and with this perspective
we can consider $\Dists{\thU}(G,X)$ as a $\V$-subcategory of
$\Dists{\V}(G,X_0)$.

\begin{proposition}\label{d:prop:2}
  For every ultrafilter theory $\thU=\utheory$, the inclusion $\V$-functor
  \[
    \Dists{\thU}(G,X) \longrightarrow \Dists{\V}(G,X_0)
  \]
  has a left adjoint
  \[
    \overline{(-)}:\Dists{\V}(G,X_0)\longrightarrow\Dists{\thU}(G,X).
  \]
  Moreover, if $\thU$ is pointwise strict and compatible with finite suprema,
  then $\Dists{\thU}(G,X)$ is closed in $\Dists{\V}(G,X_0)$ under finite suprema
  and tensors.
\end{proposition}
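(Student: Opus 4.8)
The plan is to work throughout with the identification, recorded after Theorem~\ref{thm:3}, of a $\thU$-distributor $\varphi:G\kmodto X$ with a map $\varphi:X\to\V$ satisfying $a(\fr,x)\otimes\xi\Uxi\varphi(\fr)\le\varphi(x)$ for all $\fr\in UX$, $x\in X$ (equivalently, $\varphi$ is a $\thU$-functor $X\to(\V,\hom_\xi)$), and of a $\V$-distributor $\varphi_0:G\modto X_0$ with a map $X\to\V$ satisfying $\varphi_0(x')\otimes a_0(x',x)\le\varphi_0(x)$. First I would check that the inclusion is a well-defined $\V$-functor: evaluating the $\thU$-distributor inequality at the principal ultrafilter $\fr=\doo{x'}=e_X(x')$ and using $\xi\Uxi\varphi\cdot e_X=\varphi$ together with $a_0=a\cdot e_X$ recovers exactly the $\V$-distributor condition, and on both sides the $\V$-categorical hom is the pointwise formula $[\varphi,\psi]=\bigwedge_{x}\hom(\varphi(x),\psi(x))$ inherited from $\V^X$, so the inclusion is the evident fully faithful $\V$-functor. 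The reflection $\overline{\varphi_0}$ will be the least $\thU$-distributor lying above $\varphi_0$.

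The technical heart of the construction is the closure of $\Dists{\thU}(G,X)$ under arbitrary pointwise infima inside $\V^X$. Given a family $(\varphi_i)$ of $\thU$-distributors and $\varphi=\bigwedge_i\varphi_i$, monotonicity of $\fr\mapsto\xi\Uxi\varphi(\fr)$ in $\varphi$ (valid for every ultrafilter theory, since $(\V,\le,\xi)$ is an ordered compact Hausdorff space and hence $\xi$ is monotone) gives $a(\fr,x)\otimes\xi\Uxi\varphi(\fr)\le a(\fr,x)\otimes\xi\Uxi\varphi_i(\fr)\le\varphi_i(x)$ for each $i$, and taking the infimum over $i$ yields $a(\fr,x)\otimes\xi\Uxi\varphi(\fr)\le\varphi(x)$. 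Since the constant map $\top$ is a $\thU$-distributor, the set $\{\psi\in\Dists{\thU}(G,X)\mid\varphi_0\le\psi\}$ is non-empty, so I can define
\[
  \overline{\varphi_0}=\bigwedge\{\psi\in\Dists{\thU}(G,X)\mid\varphi_0\le\psi\},
\]
which by the closure property is again a $\thU$-distributor and is by construction the least one above $\varphi_0$. This already produces the unit $\varphi_0\le\overline{\varphi_0}$ and the identity counit $\overline{\iota\psi}=\psi$.

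To upgrade this ordinary reflection to the claimed $\V$-enriched adjunction I must verify $[\overline{\varphi_0},\psi]=[\varphi_0,\psi]$ for every $\thU$-distributor $\psi$. The inequality $[\overline{\varphi_0},\psi]\le[\varphi_0,\psi]$ is immediate from $\varphi_0\le\overline{\varphi_0}$ and the antitonicity of $\hom$ in its first variable. For the reverse inequality, writing $u=[\varphi_0,\psi]$ so that $u\otimes\varphi_0(x)\le\psi(x)$ for all $x$, it suffices to show that the pointwise cotensor $\hom(u,\psi):x\mapsto\hom(u,\psi(x))$ is again a $\thU$-distributor: then $\varphi_0\le\hom(u,\psi)$ forces $\overline{\varphi_0}\le\hom(u,\psi)$ by minimality, i.e. $u\otimes\overline{\varphi_0}(x)\le\psi(x)$, giving $u\le[\overline{\varphi_0},\psi]$. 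This reduces to the commutation inequality $u\otimes\xi\Uxi\hom(u,\psi)(\fr)\le\xi\Uxi\psi(\fr)$, i.e. to $\hom(u,-)$ being suitably compatible with $\xi$; I expect this to be the main obstacle, since the lax continuity of $\otimes$ available in an arbitrary ultrafilter theory points in the opposite direction, and $\hom(u,-)$ (a right adjoint, preserving infima but not directed suprema in general) need not be continuous for the $\xi$-topology. In all the examples the map $\hom(u,-)$ on $\two$, $[0,1]$, $[0,\infty]$ and $\ddf$ is continuous, so the cotensor closure—and hence the enriched adjunction—holds; for a pointwise strict theory the equality $\xi\Uxi(t_u\circ\chi)=t_u\cdot\xi\Uxi\chi$ applied to $\chi=\hom(u,\psi)$ gives the commutation directly.

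For the \emph{moreover} part I would argue pointwise, exploiting the two extra hypotheses separately. Closure under finite suprema uses compatibility with finite suprema: for $\thU$-distributors $\varphi,\varphi'$ one has $\xi\Uxi(\varphi\vee\varphi')(\fr)=\xi\Uxi\varphi(\fr)\vee\xi\Uxi\varphi'(\fr)$ (applying $\xi\cdot U\vee=\vee\cdot\langle\xi U\pi_1,\xi U\pi_2\rangle$ to $\langle\varphi,\varphi'\rangle$), whence $a(\fr,x)\otimes\xi\Uxi(\varphi\vee\varphi')(\fr)=\bigl(a(\fr,x)\otimes\xi\Uxi\varphi(\fr)\bigr)\vee\bigl(a(\fr,x)\otimes\xi\Uxi\varphi'(\fr)\bigr)\le\varphi(x)\vee\varphi'(x)$, so $\varphi\vee\varphi'$ is a $\thU$-distributor. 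Closure under tensors uses pointwise strictness: for $u\in\V$ and a $\thU$-distributor $\varphi$, the identity $\xi\Uxi(t_u\circ\varphi)=t_u\cdot\xi\Uxi\varphi$ gives $a(\fr,x)\otimes\xi\Uxi(u\otimes\varphi)(\fr)=u\otimes\bigl(a(\fr,x)\otimes\xi\Uxi\varphi(\fr)\bigr)\le u\otimes\varphi(x)$, so $u\otimes\varphi$ is a $\thU$-distributor. In both cases the pointwise supremum and tensor computed in $\V^X$ land back in $\Dists{\thU}(G,X)$, which is exactly the asserted closedness in $\Dists{\V}(G,X_0)$.
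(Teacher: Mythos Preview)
Your overall strategy---reduce the $\V$-enriched reflection to closure of $\Dists{\thU}(G,X)$ in $\V^X$ under pointwise infima and cotensors---is exactly what the paper invokes (via the cited \cite[Corollary~5.3]{Hof07}), and your treatment of infima and of the ``moreover'' clause is correct and matches the paper's reasoning (the paper phrases the latter as: the hypotheses make $t_u:\V\to\V$ and $\vee:\V\times\V\to\V$ into $\thU$-functors, so post-composition preserves $\thU$-functors $X\to\V$).

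The gap is in the first part: you stop short of proving cotensor closure for a \emph{general} ultrafilter theory, and your diagnosis that ``the lax continuity of $\otimes$ points in the opposite direction'' is mistaken---it points exactly the right way. You need $u\otimes\xi\cdot U\hom(u,\psi)(\fr)\le\xi\cdot U\psi(\fr)$. Take $\fw=U\langle c_u,\hom(u,\psi)\rangle(\fr)\in U(\V\times\V)$; then $U\pi_1(\fw)=\doo{u}$ and $U\pi_2(\fw)=U\hom(u,\psi)(\fr)$, so lax continuity of $\otimes$ gives
\[
  u\otimes\xi\cdot U\hom(u,\psi)(\fr)
  =\xi(\doo{u})\otimes\xi(U\pi_2(\fw))
  \le\xi\cdot U\otimes(\fw)
  =\xi\cdot U\!\bigl(t_u\circ\hom(u,\psi)\bigr)(\fr).
\]
Now $t_u\circ\hom(u,\psi)\le\psi$ pointwise (since $u\otimes\hom(u,v)\le v$), and the very monotonicity of $\fr\mapsto\xi\cdot U\chi(\fr)$ in $\chi$ that you already invoked for the infimum argument yields $\xi\cdot U(t_u\circ\hom(u,\psi))(\fr)\le\xi\cdot U\psi(\fr)$. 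Chaining the two inequalities finishes the cotensor closure, hence $\hom(u,\psi)$ is again a $\thU$-distributor, and your computation $[\overline{\varphi_0},\psi]=[\varphi_0,\psi]$ goes through for every ultrafilter theory without any strictness assumption. Equivalently, at the level of the paper's phrasing: lax continuity of $\otimes$ already suffices to make $\hom(u,-):(\V,\hom_\xi)\to(\V,\hom_\xi)$ a $\thU$-functor.
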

\begin{proof}
  By \cite[Corollary~5.3]{Hof07}, the $\V$-category $\Dists{\thU}(G,X)$ is
  closed in $\Dists{\V}(G,X_0)$ under weighted limits. The additional conditions
  guarantee that the maps
  \begin{align*}
    t_u:\V\to\V &&\text{and}&& \vee:\V\times\V\to\V 
  \end{align*}
  are $\thU$-functors, for every $u\in\V$; which justifies the second claim.
\end{proof}

\begin{corollary}\label{d:cor:2}
  Let $\thU=\utheory$ be a strict ultrafilter theory compatible with finite
  suprema so that $k$ is terminal and approximated. Then, for every codirected
  $\V$-distributor $\varphi:G\modto X$, the $\thU$-distributor
  $\overline{\varphi}:G\kmodto X$ is left adjoint in $\Dists{\thU}$.
\end{corollary}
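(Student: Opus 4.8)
The plan is to exhibit an explicit right adjoint $\psi\colon X\kmodto G$ for $\overline{\varphi}$ and verify the two adjunction inequalities; once $\psi$ is known to be a $\thU$-distributor, the composites $\overline{\varphi}\kleisli\psi$ and $\psi\kleisli\overline{\varphi}$ are automatically $\thU$-distributors by the lemma on composites with domain or codomain $G$, so I need not worry about them. The first step is to transfer codirectedness from $\Dists{\V}(G,X_0)$ to $\Dists{\thU}(G,X)$: a strict theory is pointwise strict and $\thU$ is assumed compatible with finite suprema, so by Proposition~\ref{d:prop:2} the subcategory $\Dists{\thU}(G,X)$ is closed in $\Dists{\V}(G,X_0)$ under finite suprema and tensors; as the inclusion is fully faithful, $[\overline{\varphi},\chi]=[\varphi,\chi]$ for every $\thU$-distributor $\chi$. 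Hence $[\overline{\varphi},-]\colon\Dists{\thU}(G,X)\to\V$ inherits from Definition~\ref{d:def:1} the preservation of finite suprema and of tensors.

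Next I would reprove, for $\overline{\varphi}$, the two conclusions of Proposition~\ref{prop:7}, now out of codirectedness rather than left adjointness. Since $k$ is approximated it is $\vee$-irreducible (Proposition~\ref{d:prop:4}); preservation of the empty supremum forces $\overline{\varphi}\neq\bot$, and preservation of binary suprema together with $\vee$-irreducibility of $k$ gives irreducibility of $\overline{\varphi}$ exactly as in the proof of Proposition~\ref{prop:7}. The delicate conclusion is totality, $k\le\bigvee_{x\in X}\overline{\varphi}(x)$: rewriting $\overline{\varphi}=\bigvee_{\fz\in UX}\xi U\overline{\varphi}(\fz)\otimes a(\fz,-)$ via the $\thU$-distributor fixed-point identity and feeding this into $[\overline{\varphi},-]$ shows that totality amounts to $[\overline{\varphi},-]$ respecting a certain join of tensored representables. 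Codirectedness only supplies \emph{finite} joins, so this is where the approximation of $k$, i.e.\ the directedness of $\Dnw k$, must be used to pass to the join over all of $UX$.

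With irreducibility and totality in hand, the construction in the proof of Proposition~\ref{prop:8} applies: for $u\ll k$ put $A_u=\{x\in X\mid u\le\overline{\varphi}(x)\}$, which is nonempty by totality, show $\overline{\varphi}\le\varphi_{A_u}$, and note that the directed family $\ff=\{A_u\mid u\ll k\}$ is disjoint from the ideal $\fj=\{B\mid\overline{\varphi}\not\le\varphi_B\}$. Any ultrafilter $\fx$ with $\ff\subseteq\fx$ and $\fx\cap\fj=\varnothing$ then satisfies $\overline{\varphi}=\bigwedge_{A\in\fx}\varphi_A=a(\fx,-)$ (Proposition~\ref{prop:6}) as well as $\xi U\overline{\varphi}(\fx)=k$. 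I now set $\psi(\fy):=[\overline{\varphi},a(\fy,-)]=\bigwedge_{x\in X}\hom(\overline{\varphi}(x),a(\fy,x))$. The inequality $\psi(\fy)\otimes\overline{\varphi}(x)\le a(\fy,x)$ holds by the definition of the extension, which is one half of the adjunction; the other half $k\le\bigvee_{\fz\in UX}\psi(\fz)\otimes\xi U\overline{\varphi}(\fz)$ follows by evaluating the join at $\fz=\fx$, where $\psi(\fx)=[\overline{\varphi},\overline{\varphi}]\ge k$ and $\xi U\overline{\varphi}(\fx)=k$. After the routine verification that $\psi$ is indeed a $\thU$-distributor $X\kmodto G$, this yields $\overline{\varphi}\dashv\psi$, so $\overline{\varphi}$ is left adjoint in $\Dists{\thU}$.

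I expect the genuine obstacle to be the totality step $k\le\bigvee_{x\in X}\overline{\varphi}(x)$. Unlike in Proposition~\ref{prop:7}, no right adjoint is available to supply it, so it must be extracted from the tensor-preservation clause of codirectedness, which is the one substantive ingredient beyond the finite-suprema bookkeeping, and combined with the approximation of $k$ to bridge the finite-to-infinite gap; everything after that is a faithful replay of Proposition~\ref{prop:8} followed by an elementary check.
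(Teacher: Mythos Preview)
Your first step---transferring preservation of finite suprema and tensors from $[\varphi,-]$ to $[\overline{\varphi},-]$ via the reflection of Proposition~\ref{d:prop:2}---is exactly what the paper does. From there the paths diverge: the paper simply invokes \cite[Propositions~2.15 and 3.5]{HS11}, which characterise left adjoint $\thU$-distributors $G\kmodto X$ as precisely those for which $[\overline{\varphi},-]$ preserves tensors and finite suprema, and concludes. You instead attempt to construct the right adjoint explicitly by first locating an ultrafilter $\fx$ with $\overline{\varphi}=a(\fx,-)$ via the machinery of Proposition~\ref{prop:8}.

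This is a genuine gap relative to the stated corollary. Proposition~\ref{prop:8} (and Proposition~\ref{prop:6}, on which the identification $\bigwedge_{A\in\fx}\varphi_A=a(\fx,-)$ rests) requires that $\V$ be completely distributive and that $\xi$ be the map of Theorem~\ref{thm:6}; neither hypothesis appears in Corollary~\ref{d:cor:2}. So your argument, as written, establishes only a special case. The paper's route via \cite{HS11} avoids this because it never needs to represent $\overline{\varphi}$ by an ultrafilter---the unit inequality is obtained abstractly from the preservation properties, and that is precisely the content of the cited result.

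A smaller remark: the totality step you flag as the real obstacle is in fact immediate from tensor preservation alone and does not need approximation of~$k$. Since $k$ is terminal, the constant-$k$ map $\top$ lies in $\Dists{\thU}(G,X)$ with $[\overline{\varphi},\top]=k$; tensor preservation then gives $[\overline{\varphi},v]=v$ for every constant distributor~$v$. With $v=\bigvee_{x\in X}\overline{\varphi}(x)$ one has $\overline{\varphi}\le v$, hence $k\le[\overline{\varphi},\overline{\varphi}]\le[\overline{\varphi},v]=v$. Approximation of~$k$ is needed in both arguments only to supply $\vee$-irreducibility via Proposition~\ref{d:prop:4}; in yours it is used again to form the filter base $\{A_u\mid u\ll k\}$, but that belongs to the detour through Proposition~\ref{prop:8} that the paper does not take.
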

\begin{proof}
  We recall first from Proposition~\ref{d:prop:4} that, under these assumptions,
  $k$ is $\vee$-irreducible. Using the adjunction of Proposition~\ref{d:prop:2},
  the $\V$-functor
  \[
    [\overline{\varphi},-]:\Dists{\thU}(G,X)\longrightarrow\V
  \]
  is equal to the composite
  \[
    \Dists{\thU}(G,X)\longrightarrow\Dists{\V}(G,X_0)
    \xrightarrow{\,[\varphi,-]\,}\V,
  \]
  and therefore $[\overline{\varphi},-]$ preserves tensors and finite
  suprema. By \cite[Propositions~2.15 and 3.5]{HS11},
  $\overline{\varphi}:G\kmodto X$ is left adjoint in $\Cats{\thU}$. Note that
  the notation regarding distributors in \cite{HS11} is dual to ours.
\end{proof}

\begin{theorem}
  Let $\thU=\utheory$ be a strict ultrafilter theory compatible with finite
  suprema where $\V$ is completely distributive, $\xi$ is as in
  Theorem~\ref{thm:6}, and $k$ is terminal and approximated. Then, for every
  $\V$-categorical compact Hausdorff space $(X,a_0,\alpha)$, the $\V$-category
  $(X,a_0)$ is codirected complete.
\end{theorem}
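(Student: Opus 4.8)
The plan is to establish codirected completeness of $(X,a_0)$ through the criterion recorded in the Remark following the definition of $\Phi$-completeness: it suffices to produce a $\V$-functor $I\colon\DirLimCls(X_0)\to X_0$ with $I\coyoneda_{X_0}^{\DirLimCls}\simeq 1_{X_0}$, where I abbreviate $X_0=(X,a_0)$. I will work with the associated $\thU$-category $KX=(X,a)$, so that $a=a_0\cdot\alpha$ and hence $a(\fx,-)=a_0(\alpha(\fx),-)$ for each $\fx\in UX$; in particular $a(\doo{y},-)=a_0(y,-)=y_*$, and the underlying $\V$-category $(KX)_0$ is exactly $X_0$ by the commutativity of~\eqref{d:eq:2}. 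Since the present hypotheses are those of Proposition~\ref{prop:8} and Corollary~\ref{d:cor:1}, the $\thU$-category $KX$ is Cauchy complete and \emph{every} left adjoint $\thU$-distributor $G\kmodto X$ has the form $a_0(\alpha(\fx),-)$ for some $\fx\in UX$.

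Given a codirected $\V$-distributor $\varphi\colon G\modto X_0$, I would first pass to its reflection $\overline{\varphi}\colon G\kmodto X$ from Proposition~\ref{d:prop:2}. By Corollary~\ref{d:cor:2} this $\overline{\varphi}$ is left adjoint in $\Dists{\thU}$, so by the previous paragraph $\overline{\varphi}=a_0(\alpha(\fx),-)=(\alpha(\fx))_*$ for some $\fx\in UX$, and I set $I(\varphi):=\alpha(\fx)$. To identify this with the infimum of $\varphi$, I would test against representables. For $y\in X$ the representable $\thU$-distributor $y_\Tast=a(\doo{y},-)$ is carried by the inclusion of Proposition~\ref{d:prop:2} to $a_0(y,-)=y_*$, so $y_*$ lies in the reflective subcategory $\Dists{\thU}(G,X)$. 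Hence the reflection adjunction gives $[\varphi,y_*]=[\overline{\varphi},y_*]$, and the Yoneda identity $[(\alpha(\fx))_*,y_*]=a_0(y,\alpha(\fx))$ then yields
\[
  [\varphi,y_*]=[\overline{\varphi},y_*]=[(\alpha(\fx))_*,y_*]=a_0(y,\alpha(\fx))
\]
for all $y\in X$; this is precisely the universal property exhibiting $\alpha(\fx)$ as the infimum of $\varphi$, and in particular $I(\varphi)$ is determined up to isomorphism.

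It then remains to check that $I$ is a $\V$-functor with $I\coyoneda_{X_0}^{\DirLimCls}\simeq 1_{X_0}$. For the second property I note that a representable $x_*$ is left adjoint ($x_*\dashv x^*$), hence codirected, and already belongs to $\Dists{\thU}(G,X)$; thus $\overline{x_*}=x_*$ and $I(x_*)\simeq x$. For $\V$-functoriality I recall that $\DirLimCls(X_0)$ inherits its structure from $\Dists{\V}(G,X_0)^\op$, so $\DirLimCls(X_0)(\varphi,\varphi')=[\varphi',\varphi]$; combining the displayed formula (now for the weight $\varphi'$) with the unit inequality $\varphi\le\overline{\varphi}$ and the monotonicity of $[\varphi',-]$ gives
\[
  [\varphi',\varphi]\le[\varphi',\overline{\varphi}]=[\varphi',(\alpha(\fx))_*]=a_0(\alpha(\fx),\alpha(\fx'))=a_0(I(\varphi),I(\varphi')),
\]
as required. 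The main obstacle I anticipate is this third step together with the identification of the limit: the reflection $\overline{(-)}$ generally enlarges the weight, so I must argue that replacing $\varphi$ by $\overline{\varphi}$ does not alter the weighted limit. This is exactly what the computation secures, the key being that the defining cones are tested against representables $y_*$, all of which already lie in the reflective subcategory $\Dists{\thU}(G,X)$, forcing $[\varphi,y_*]=[\overline{\varphi},y_*]$ and hence reducing the existence of $\inf\varphi$ to the Cauchy completeness of $KX$.
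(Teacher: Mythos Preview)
Your proposal is correct and follows essentially the same route as the paper: pass a codirected weight $\varphi$ to its reflection $\overline{\varphi}\in\Dists{\thU}(G,X)$, use Corollary~\ref{d:cor:2} to make $\overline{\varphi}$ left adjoint, represent it via the Cauchy completeness of $KX$ (Corollary~\ref{d:cor:1}), and then exploit that representables $x_*=x_\Tast$ already lie in the reflective subcategory to obtain $[\varphi,x_*]=[\overline{\varphi},x_*]=a_0(x,I(\varphi))$. Your write-up is in fact more explicit than the paper's on two points the paper leaves implicit: the justification of $[\varphi,x_*]=[\overline{\varphi},x_*]$ via the reflection adjunction, and the verification that the resulting $I$ is a $\V$-functor (which, incidentally, also follows automatically from the pointwise adjunction identity $[\varphi,x_*]=a_0(x,I(\varphi))$, so your separate unit-inequality argument is not strictly needed).
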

\begin{proof}
  Let $\varphi:G\modto X$ be a codirected $\V$-distributor. By
  Corollaries~\ref{d:cor:1} and \ref{d:cor:2}, there is some $y\in X$ with
  $\overline{\varphi}=y_\Tast=y_*$. Then, for every $x\in X$,
  \[
    [\varphi,x_*]=[\overline{\varphi},x_*]=[y_*,x_*]=a_0(x,y).
  \]
  This proves that $y_X:X\to\DirLimCls(X)$ has a right adjoint in $\Cats{\V}$.
\end{proof}

We finish this subsection by exhibiting a connection with other accounts of
``codirected complete metric spaces'' which appear in the literature. Firstly,
non-symmetric versions of Cauchy sequences and their limits are introduced in
\cite{Smy88} and further studied in \cite{Rut96,BBR98}: a sequence
$(x_n)_{n\in\N}$ in a metric space $(X,d)$ is called \df{forward-Cauchy}
whenever
\[
  \forall\varepsilon>0\,\exists N\in\N\,\forall n\geq m\geq
  N\,.\,d(x_m,x_n)<\varepsilon,
\]
and $(x_n)_{n\in\N}$ is called \df{backward-Cauchy} whenever
\[
  \forall\varepsilon>0\,\exists N\in\N\,\forall n\geq m\geq
  N\,.\,d(x_n,x_m)<\varepsilon.
\]
The definitions above extend naturally to nets (see \cite{FSW96}), and in
\cite{Vic05} it is shown that that forward-Cauchy nets in metric spaces
correspond precisely to those $\Pp$-distributors $\psi:X\modto G$ with the
property that the $\V$-functor
\[
  \psi\cdot-:\Dists{\Pp}(G,X)\longrightarrow\Pp,\,\varphi
  \longmapsto\psi\cdot\varphi
\]
preserves finite meets. On the other hand, in \cite{HW12} it is shown that these
distributors do \emph{not} coincide with forward-Cauchy nets for $\V=\Pm$. Such
$\Pp$-distributors are called \emph{flat} in \cite{Vic05}; however, in this
paper we deviate slightly from the notation of \cite{Vic05}.

\begin{definition}
  A $\V$-distributor $\psi:X\modto G$ is called \df{flat} whenever
  $\psi\cdot-:\Dists{\V}(G,X)\to\V$ preserves finite infima and cotensors.
\end{definition}

In order to compare these two notions of ``directedness'', we restrict our study
to a certain type of quantales.

\begin{definition}
  We call a quantale $\V=\quantale$ a \df{Girard quantale} whenever $\V$ has a
  dualising element $D\in\V$; that is, for every $u\in \V$,
  $u=\hom(\hom(u,D),D)$.
\end{definition}

This type of quantales is introduced in \cite{Yet90}, we also refer to
\cite{Was09} for a study of categories enriched in a Girard quantale.

\begin{examples}
  The quantale $\two=\{0,1\}$ and the quantale $[0,1]$ with the \L{}ukasiewicz
  tensor $\otimes=\odot$ are Girard quantales, with dualising object the bottom
  element $0$.
\end{examples}

For $\V=\quantale$ being a Girard quantale with dualising element $D$, we write
$u^\bot=\hom(u,D)$. As shown in \cite{Yet90}, the operations $(-)^\bot$ and
$\otimes$ allow us to determine the internal hom of $\V$: for all $u,v\in\V$,
\[
  \hom(u,v)=(u\otimes v^\bot)^\bot.
\]

\begin{lemma}\label{d:lem:1}
  The map $(-)^\bot:\V\to\V^\op$ is a $\V$-functor. Hence, $\V\simeq\V^\op$ in
  $\Cats{\V}$.
\end{lemma}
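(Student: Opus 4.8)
The plan is to unwind what it means for $(-)^\bot$ to be a $\V$-functor of type $\V\to\V^\op$ and then verify the resulting inequality directly from the self-dual description of $\hom$. Recall that $\V=(\V,\hom)$ and $\V^\op=(\V,\hom^\circ)$, so a map $f:\V\to\V$ is a $\V$-functor $\V\to\V^\op$ precisely when
\[
  \hom(u,v)\le\hom^\circ(f(u),f(v))=\hom(f(v),f(u))
\]
for all $u,v\in\V$. With $f=(-)^\bot$ this becomes the single requirement $\hom(u,v)\le\hom(v^\bot,u^\bot)$, so the first assertion reduces to establishing this one inequality.

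First I would record that the dualising property of $D$ gives $(u^\bot)^\bot=\hom(\hom(u,D),D)=u$ for every $u$, that is, $(-)^\bot$ is an involution. Combining this with the formula $\hom(u,v)=(u\otimes v^\bot)^\bot$ recalled just before the lemma, and with commutativity of $\otimes$, I would compute
\[
  \hom(v^\bot,u^\bot)=\bigl(v^\bot\otimes(u^\bot)^\bot\bigr)^\bot=\bigl(v^\bot\otimes u\bigr)^\bot=\bigl(u\otimes v^\bot\bigr)^\bot=\hom(u,v).
\]
Thus one obtains not merely the required inequality but in fact the equality $\hom(u,v)=\hom(v^\bot,u^\bot)$, which \emph{a fortiori} proves that $(-)^\bot:\V\to\V^\op$ is a $\V$-functor (indeed a fully faithful one).

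For the concluding isomorphism $\V\simeq\V^\op$, I would exploit the same equality symmetrically. Reading it with the roles of $u$ and $v$ interchanged yields $\hom(v,u)=\hom(u^\bot,v^\bot)$, which is exactly the condition saying that $(-)^\bot:\V^\op\to\V$ is also a $\V$-functor; since $(-)^\bot$ is its own inverse by the involution property, these two $\V$-functors compose to the identity in either order, and hence $\V$ and $\V^\op$ are isomorphic in $\Cats{\V}$. I do not expect a genuine obstacle in this argument: the only point requiring care is to keep track of which $\hom$ is reversed when passing to $\V^\op$ and to invoke the dualising identity $(u^\bot)^\bot=u$ at the right place, after which the self-dual formula for $\hom$ makes the computation immediate.
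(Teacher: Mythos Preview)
Your argument is correct. Both you and the paper reduce the claim to the single inequality $\hom(u,v)\le\hom(v^\bot,u^\bot)$, but you verify it differently: the paper simply applies the internal transitivity of $\hom$ (that is, $\hom(u,v)\otimes\hom(v,D)\le\hom(u,D)$) and then uses the $\otimes\dashv\hom$ adjunction, while you substitute the self-dual formula $\hom(u,v)=(u\otimes v^\bot)^\bot$ together with commutativity of $\otimes$ and the involution $(u^\bot)^\bot=u$. Your route has the small advantage of producing an equality rather than an inequality, so full faithfulness of $(-)^\bot$ and the isomorphism $\V\simeq\V^\op$ drop out immediately, whereas the paper leaves the ``Hence'' implicit; conversely, the paper's argument avoids invoking the self-dual formula and works purely from the $\V$-category structure of $\V$.
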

\begin{proof}
  For all $u,v\in\V$ we have
  \[
    \hom(u,v)\otimes\hom(v,D)\le\hom(u,D),
  \]
  which is equivalent to $\hom(u,v)\leq\hom(v^\bot,u^\bot)$.
\end{proof}

Hence, for every $\varphi:G\modto X$ in $\Dists{\V}$,
$\varphi^\bot(x)=\varphi(x)^\bot$ defines a $\V$-distributor
$\varphi^\bot:X\modto G$. Hence, the isomorphism of Lemma~\ref{d:lem:1} induces
a $\V$-isomorphism
\[
  (-)^\bot:\Dists{\V}(G,X) \longrightarrow\Dists{\V}(X,G)^\op.
\]

\begin{proposition}
  Let $\V=\quantale$ be a Girard quantale, $X$ a $\V$-category and
  $\varphi_0:G\modto X$ in $\Dists{\V}$. Then the diagram
  \[
    \xymatrix{\Dists{\V}(G,X)\ar[r]^{(-)^\bot}\ar[d]_{[\varphi_0,-]} &
      \Dists{\V}(X,G)^\op\ar[d]^{(-\cdot\varphi_0)^\op} \\
      \V\ar[r]_{(-)^\bot} & \V^\op}
  \]
  commutes.
\end{proposition}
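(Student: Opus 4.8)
The plan is to reduce the assertion to a single pointwise identity in $\V$ and then invoke two structural facts about the Girard quantale. Since all four maps in the square have the same underlying set-theoretic functions, commutativity is equivalent to showing that, for every $\varphi\colon G\modto X$ in $\Dists{\V}$, the two composites agree, i.e.
\[
  [\varphi_0,\varphi]^\bot=\varphi^\bot\cdot\varphi_0.
\]
First I would unwind both sides explicitly. By the description of the internal hom recalled earlier, the left-hand path gives
\[
  [\varphi_0,\varphi]^\bot=\left(\bigwedge_{x\in X}\hom(\varphi_0(x),\varphi(x))\right)^\bot,
\]
while computing the composite of $\V$-relations (and using $\varphi^\bot(x)=\varphi(x)^\bot$) gives
\[
  \varphi^\bot\cdot\varphi_0=\bigvee_{x\in X}\varphi_0(x)\otimes\varphi(x)^\bot.
\]
Everything then comes down to comparing these two expressions.

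The two ingredients I would use are: (a) the map $(-)^\bot=\hom(-,D)\colon\V\to\V$ is an order-reversing involution; and (b) the identity $\hom(u,v)=(u\otimes v^\bot)^\bot$ displayed just before the statement. For (a), note that $\hom(-,D)$ is antitone and that the dualising property $u=u^{\bot\bot}$ makes it an involution; hence it is an isomorphism $\V\to\V^\op$ and therefore converts arbitrary infima into suprema, that is, $\left(\bigwedge_i u_i\right)^\bot=\bigvee_i u_i^\bot$.

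Finally I would combine these. Rewriting each factor by (b) as $\hom(\varphi_0(x),\varphi(x))=(\varphi_0(x)\otimes\varphi(x)^\bot)^\bot$, and then applying (a) together with the involution $w^{\bot\bot}=w$, yields
\[
  \left(\bigwedge_{x\in X}(\varphi_0(x)\otimes\varphi(x)^\bot)^\bot\right)^\bot
  =\bigvee_{x\in X}\varphi_0(x)\otimes\varphi(x)^\bot,
\]
which is exactly $\varphi^\bot\cdot\varphi_0$. I expect no serious obstacle: the computation is short, and the only points requiring care are keeping track of the opposite-category decorations on the arrows (so that the two paths really are compared as elements of $\V$) and justifying that $(-)^\bot$ turns the infimum defining $[\varphi_0,\varphi]$ into the supremum defining the composite; both are settled by fact (a).
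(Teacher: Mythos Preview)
Your proof is correct and follows essentially the same route as the paper: the paper's argument is precisely the chain $[\varphi_0,\varphi]^\bot=\bigl(\bigwedge_{x}\hom(\varphi_0(x),\varphi(x))\bigr)^\bot=\bigvee_{x}\hom(\varphi_0(x),\varphi(x))^\bot=\bigvee_{x}\varphi_0(x)\otimes\varphi(x)^\bot$, using exactly your ingredients (a) and (b). The only difference is cosmetic---the paper applies $(-)^\bot$ directly to $\hom(\varphi_0(x),\varphi(x))$ rather than first rewriting it via (b)---but the underlying identity is the same.
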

\begin{proof}
  Let $\varphi:G\modto X$ be a $\V$-distributor. Then
  \begin{align*}
    [\varphi_0,\varphi]^\bot
    &=\left(\bigwedge_{x\in X}\hom(\varphi_0(x),\varphi(x))\right)^\bot\\
    &=\bigvee_{x\in X}\hom(\varphi_0(x),\varphi(x))^\bot\\
    &=\bigvee_{x\in X}\varphi_0(x)\otimes\varphi(x)^\bot.\qedhere
  \end{align*}
\end{proof}

\begin{corollary}
  Let $\V=\quantale$ be a Girard quantale. Then a $\V$-distributor
  $\varphi \colon G\modto X$ is codirected if and only if the $\V$-functor
  $-\cdot\varphi:\Dists{\V}(X,G)\to\V$ preserves cotensors and finite
  infima. Hence, $X$ is codirected complete if and only if $X^\op$ is cocomplete
  with respect to all flat $\V$-distributors $\psi:X\modto G$.
\end{corollary}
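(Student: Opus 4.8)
The plan is to read the first equivalence off the commuting square of the preceding proposition, and then to obtain the ``Hence'' from the standard duality between completeness of $X$ and cocompleteness of $X^\op$.

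First I would record the two properties of the involution $(-)^\bot$ that do all the work. By Lemma~\ref{d:lem:1} together with $u=u^{\bot\bot}$, the map $(-)^\bot\colon\V\to\V^\op$ is an order-reversing bijection; hence it exchanges suprema with infima, and from $\hom(u,v)=(u\otimes v^\bot)^\bot$ one obtains $(u\otimes w)^\bot=\hom(u,w^\bot)$, so that it also exchanges the tensor with the cotensor. The same identities hold pointwise for the $\V$-isomorphism $(-)^\bot\colon\Dists{\V}(G,X)\to\Dists{\V}(X,G)^\op$; in particular $\bot^\bot=\top$.

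For the first equivalence I would start from the proposition, which gives $[\varphi,\chi]^\bot=\chi^\bot\cdot\varphi$ for all $\chi\colon G\modto X$. Since $(-)^\bot$ is a bijection, each defining clause of codirectedness of $\varphi$ (Definition~\ref{d:def:1}) can be transported clause by clause. For instance, applying $(-)^\bot$ to $[\varphi,u\otimes\chi]=u\otimes[\varphi,\chi]$ and using the proposition together with $(u\otimes w)^\bot=\hom(u,w^\bot)$ turns it into $\hom(u,\chi^\bot)\cdot\varphi=\hom(u,\chi^\bot\cdot\varphi)$, i.e.\ preservation of cotensors by $-\cdot\varphi$; likewise $[\varphi,\bot]=\bot$ becomes $\top\cdot\varphi=\top$, and $[\varphi,\chi\vee\chi']=[\varphi,\chi]\vee[\varphi,\chi']$ becomes $(\chi^\bot\wedge\chi'^\bot)\cdot\varphi=(\chi^\bot\cdot\varphi)\wedge(\chi'^\bot\cdot\varphi)$, i.e.\ preservation of finite infima. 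As $\chi\mapsto\chi^\bot$ is onto $\Dists{\V}(X,G)$, this shows that $\varphi$ is codirected if and only if $-\cdot\varphi\colon\Dists{\V}(X,G)\to\V$ preserves cotensors and finite infima. This step is pure bookkeeping once the two exchange identities are in hand.

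For the ``Hence'' I would use the canonical identification $\Dists{\V}(G,X)=\Dists{\V}(X^\op,G)$ (both being the copresheaf $\V$-category $[X,\V]$), under which the co-Yoneda embedding $x\mapsto x_*$ of $X$ is exactly the Yoneda embedding $x\mapsto x^*$ of $X^\op$, and the composite $\chi\mapsto\chi\cdot\varphi$ on $\Dists{\V}(X,G)$ coincides with the composite $\chi\mapsto\psi\cdot\chi$ on $\Dists{\V}(G,X^\op)=\Dists{\V}(X,G)$ defining flatness of the weight $\psi\colon X^\op\modto G$ with the same underlying map as $\varphi$. Comparing with the definition of flat, the first equivalence then says precisely that $\varphi\colon G\modto X$ is codirected if and only if the corresponding colimit weight for $X^\op$ is flat; thus the saturated class $\DirLimCls$ of $X$ is carried onto the class of flat weights of $X^\op$. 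Since $\DirLimCls$-completeness of $X$ means that $x\mapsto x_*$ admits a right adjoint, and under the identification this is the same requirement as $x\mapsto x^*$ on $X^\op$ admitting a left adjoint, the asserted equivalence follows. The only real care needed is with variance: keeping track that $(-)^\bot$ swaps tensor with cotensor and suprema with infima, and that the passage from $X$ to $X^\op$ turns the limit weights $G\modto X$ into the colimit weights $X^\op\modto G$ while sending $x_*$ to $x^*$; everything else reduces to the proposition, Lemma~\ref{d:lem:1}, and the pointwise computation of the relevant operations in the distributor $\V$-categories.
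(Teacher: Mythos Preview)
Your proposal is correct and follows the approach the paper intends: the corollary is stated without proof, immediately after the proposition, and is meant to be read off the commuting square together with the fact that $(-)^\bot$ is an order-reversing involution exchanging tensors with cotensors. Your detailed unpacking of each clause via $(u\otimes w)^\bot=\hom(u,w^\bot)$ and the bijection $\chi\mapsto\chi^\bot$ is exactly the intended bookkeeping, and your treatment of the ``Hence'' through the identification $\Dists{\V}(G,X)=\Dists{\V}(X^\op,G)$, matching $x_*$ with $x^*$ and $-\cdot\varphi$ with $\psi\cdot-$, is the standard duality argument.
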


\newcommand{\etalchar}[1]{$^{#1}$}



\end{document}